\newtheorem{definition}{Definition}
\newtheorem{lemma}[definition]{Lemma}
\newtheorem{theorem}[definition]{Theorem}
\newtheorem{corollary}[definition]{Corollary}
\newtheorem{remark}[definition]{Remark}
\newcommand*{\N}{\ensuremath{\mathbb{N}}}
\newcommand*{\Z}{\ensuremath{\mathbb{Z}}}
\newcommand*{\R}{\ensuremath{\mathbb{R}}}
\newcommand*{\C}{\ensuremath{\mathbb{C}}}
\renewcommand{\i}{\mathrm{i}}
\renewcommand{\phi}{\varphi}
\renewcommand{\d}[1]{\,\mathrm{d}#1 \,}
\newcommand{\dS}[1]{\,\mathrm{dS}#1 \,}
\newcommand{\1}{\mathds{1}}
\renewcommand{\S}{\mathbb{S}}
\renewcommand{\Re}{\mathrm{Re}\,}
\renewcommand{\Im}{\mathrm{Im}\,}
\renewcommand{\div}{\mathrm{div}}
\newcommand{\cmp}{{\mathrm{cmp}}}
\DeclareMathOperator{\supp}{\mathrm{supp}}
\renewcommand{\rho}{{\varrho}}
\renewcommand{\epsilon}{{\varepsilon}}
\newcommand{\loc}{{\mathrm{loc}}}
\newcommand{\out}{{\mathrm{out}}}
\newcommand{\inn}{{\mathrm{in}}}
\newcommand{\marker}[1]{{#1}} %\color{blue} 
\begin{document}

\title{\marker{A Factorization Method and Monotonicity Bounds in Inverse Medium Scattering for Contrasts with Fixed Sign on the Boundary}}
\author{Evgeny Lakshtanov\thanks{Department of Mathematics, University of Aveiro, 3810-193 Aveiro, Portugal; this work was supported by Portuguese funds through the CIDMA - Center for Research and Development in Mathematics and Applications, and the Portuguese Foundation for Science and Technology (``FCT--Fund\c{c}\~{a}o para a Ci\^{e}ncia e a Tecnologia''), within project UID/MAT/04106/2013 \texttt{lakshtanov@ua.pt}} \and
Armin Lechleiter\thanks{Center for Industrial Mathematics, University of Bremen, 28359 Bremen, Germany; \texttt{lechleiter@math.uni-bremen.de}; this work was supported through an exploratory project granted by the University of Bremen in the framework of its institutional strategy, funded by the excellence initiative of the federal and state governments of Germany.}}

\maketitle

\begin{abstract}
% We generalize factorization method to be used for reconstruction of equation coefficients.
% Earlier ideas of the Factorization method been used to find shape of the support of the inhomogeneity and/or its transmission eigenvalues.
% We generalize ideas of inside-outside duality (we call it monotonicity principle) to show that this approach allows an alternative proof of uniqueness in the inverse scattering problem.
% Note, that factorization method is applicable in situation when methods based on properties of non-physical solutions (CGO) failed, like situation of presence of obstacles inside medium or/and data are given on a contour outside of the medium’s support.
We generalize the factorization method for inverse medium scattering using a particular factorization of the difference of two far field operators.
Whilst the factorization method been used so far mainly to identify the shape of a scatterer's support, we show that factorizations based on Dirichlet-to-Neumann operators can be used to compute bounds for numerical values of the medium on the boundary of its support.
To this end, we generalize ideas from inside-outside duality to obtain a monotonicity principle that allows for alternative uniqueness proofs for particular inverse scattering problems (e.g., when obstacles are present inside the medium).
This monotonicity principle indeed is our most important technical tool:
It further directly shows that the boundary values of the medium's contrast function are uniquely determined by the corresponding far field operator.
Our particular factorization of far field operators additionally implies that the factorization method rigorously characterizes the support of an inhomogeneous medium if the contrast function takes merely positive or negative values on the boundary of its support, independent of the contrast's values inside its support.
Finally, the monotonicity principle yields a simple algorithm to compute upper and lower bounds for these boundary values, assuming the support of the contrast is known.
Numerical experiments show feasibility of a resulting numerical algorithm.
\end{abstract}

\section{Introduction}
The factorization method is well-known to identify the shape of scattering objects from measurements of near or far field data for various models of time-harmonic wave propagation~\cite{Kirsc2008}.
It is notably able to detect regions where known inhomogeneous media are perturbed by either changes in the wave speed, in the density, or by obstacles~\cite{nachman2007imaging,Cakon2015}.
In particular in the latter case, classical uniqueness proofs in inverse scattering theory based on Calderon's property of completeness of products of solutions typically fail.
The method's flexibility with respect to the model however faces a crucial positivity assumption on the middle operator in the data operator's factorization that gives the method its name.
Additionally, it seems complicated to extend the method towards reconstructing information on numerical values of material parameters.
(See~\cite{Kirsch2011} for such an attempt in impedance tomography.)

\marker{
In this paper, we use a factorization of the far field operator for a smooth, scalar and real-valued contrast (i.e., an isotropic non-absorbing inhomogeneous medium) from~\cite{Laksh2013b} in function spaces on the boundary of the scatterer to obtain a sign-definite factorization if the contrast function is, roughly speaking, strictly positive or strictly negative on the boundary of the scatterer.
This factorization firstly implies that the factorization method is rigorously applicable to inhomogeneous media if the smooth, real-valued contrast takes strictly positive or strictly negative boundary values, independent of the values the contrast takes inside its support.
Secondly, we deduce a uniqueness theorem for the values of contrast on the boundary of its support given far field data of the scattering object, and thirdly we obtain a simple monotonicity-type algorithm computing upper and lower bounds for these boundary values, which is briefly sketched and demonstrated via numerical examples.
Further consequences include for instance uniqueness results for scattering problems involving obstacles inside inhomogeneous media.%
}

Our approach can be roughly described as follows:
We compare a measured far field operator $F_1$ corresponding to \marker{an unknown, real-valued} contrast $q_1$ with an auxiliary \marker{far field} operator $F_2$ corresponding to a second \marker{artificial, real-valued} contrast $q_2$.
Writing $\mathcal{S}_2$ for the scattering operator for $q_2$, it is easy to show that operator $\mathcal{S}_2^\ast(F_1-F_2)$ is normal.
We further show that the real part of its quadratic form is sign-definite if $q_1 - q_2 \gtreqless 0$ in $\R^d$.
Via techniques from pseudo-differential operator theory we refine this result by demonstrating that this form is, roughly speaking, sign-definite if and only if  $q_1 - q_2 \gtrless 0$ on the boundary of the common support $D$ of $q_{1,2}$.
This is one of the few monotonicity results in scattering theory: If $q_1 > q_2$ (or $q_1 < q_2$) on $\partial D$, then the real part of the quadratic form of $\mathcal{S}_2^\ast(F_1-F_2)$ is negative (positive), up to a finite-dimensional perturbation.
It is based on a factorization of $F_{1,2}$ via Dirichlet-to-Neumann operators from~\cite{Laksh2013b}.

The rest of this paper is structured as follows:
We briefly review theory on the direct scattering problem in Section~\ref{se:direct} and show in Section~\ref{se:oldStyle} that the real parts of the eigenvalues of $\mathcal S_2^\ast (F_1 - F_2)$ relate to the sign of $q_1 - q_2$ in $\R^d$.
Section~\ref{se:facDtN} then characterizes the sign of all but finitely many real parts of these eigenvalues by the sign of $q_1 - q_2$ on the boundary of their joint support.
Finally, Section~\ref{se:appl} treats several applications of this result, providing algorithms for particular inverse scattering problems.

\section{The forward scattering problem}\label{se:direct}
Consider a wave number $k>0$, a real-valued contrast function $q: \R^d \to \R$, and an entire solution $u^i$ of the Helmholtz equation $\Delta u^i + k^2 \, u^i = 0$ in $\R^d$.
The forward scattering problem then seeks for a total field $u$ solving
\begin{equation}\label{eq:helmholtz}
  \Delta u + k^2 (1+q) u = 0 \qquad \text{in } \R^d,
\end{equation}
subject to Sommerfeld's radiation condition for the scattered field $u^s  = u - u^i$,
\begin{equation}\label{eq:SRC}
  \lim_{r\to\infty} r^{(m-1)/2} \left( \frac{\partial u^s}{\partial r}(r \hat{x}) - \i k u^s(r \hat{x}) \right)
  = 0,
  \qquad |\hat{x}| =1,
\end{equation}
uniformly in all $\hat{x} \in \S^{d-1}=\{ x\in\R^d, \, |x|=1\}$.
The scattering problem~(\ref{eq:helmholtz}-\ref{eq:SRC}) possesses a unique weak solution $u\in H^2_\loc(\R^d)$ if, e.g., $q\in L^\infty(\R^d,\C)$ satisfies $\Im(q) \geq 0$, see~\cite{Colto2013}.
Under these assumptions, the evaluation of the far field $u^\infty=u^\infty_q:\, \S \to \C$ of the scattered field $u^s$ at the point $\hat{x} \in \S$ is defined by
\[
  u^s(r\hat{x}) = \gamma_d \frac{\exp(\i k r)}{r} u^\infty(\hat{x}) + \mathcal{O}\left(\frac1{r^2}\right) \quad \text{as } r \to \infty,
  \qquad
  \gamma_d = \begin{cases} \frac1{4\pi} & d=3, \\ \frac{\exp(\i \pi/4)}{\sqrt{8\pi k}} & d=2, \end{cases}
%  \quad
%   \gamma=\gamma_d =\frac{1}{4\pi}\left (\frac{k}{2\pi i} \right )^{\frac{d-3}{2}}
\]
and possesses for each $R>0$ with $\supp(q) \Subset B_R$ the representation
\begin{equation} \label{eq:farFieldBall}
  u^\infty(\hat{x})
  = \int_{\partial B_R} \left[ u^s(y) \frac{\partial e^{-\i k\, y \cdot \hat{x}}}{\partial\nu(y)}  - \frac{\partial u^s(y)}{\partial\nu(y)} e^{-\i k\, y \cdot \hat{x}} \right] \dS{(y)}, \qquad \hat{x} \in \S^{d-1},
\end{equation}
where $\nu$ here and elsewhere denotes the outer unit normal to $D$.
For incident plane waves $u^i(x,\theta) = \exp(\i k \, x\cdot \theta)$ of direction $\theta \in \S$ we denote from now on the dependence of $u = u(\cdot,\theta)$, $u^s = u^s(\cdot, \theta)$, and $u^\infty = u^\infty(\cdot, \theta)$ on the  incident direction $\theta$ explicitly.
The far field pattern $(\hat{x}, \theta) \mapsto u^\infty(\hat{x}, \theta)$ then defines the far field operator
\begin{equation}\label{eq:Fq}
  F=F_q: \, L^2(\S)\to L^2(\S),
  \qquad
  g \mapsto F g(\hat{x}) = \int_\S u^\infty(\hat{x},\theta) g(\theta) \dS{(\theta)}.
\end{equation}
We recall that the far field operator is normal if the contrast $q$ has compact support and is real-valued, see~\cite{Colto2013}.
For simplicity we denote this set of functions by
\[
  L^\infty_\cmp(\R^d,\R) = \left\{ q \in L^\infty(\R^d), \, q \text{ is real-valued and }  \supp(q) \text{ is compact} \right\}
\]
and assume that all contrasts considered in the sequel belong to this set.
We further define the scattering operator
\[
  \mathcal{S} = \mathcal{S}_q:\, L^2(\S) \to L^2(\S),
  \qquad
  \mathcal{S} = I + 2\i k |\gamma_d|^2 \, F_q.
\]
\begin{lemma}\label{eq:diffNormal}
  If $q_{1,2}\in L^\infty_\cmp(\R^d,\R)$ with associated far field- and scattering operators $F_{1,2}$ and $\mathcal{S}_{1,2}$, then $\mathcal{S}_2 ^\ast (F_1 - F_2)$ is a normal operator on $L^2(\S)$.
\end{lemma}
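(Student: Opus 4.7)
The plan is to reduce the claim to the observation that $U - I$ is normal whenever $U$ is unitary. The preliminary ingredient, not stated explicitly in the excerpt but standard for real-valued compactly-supported contrasts (cf.\ \cite{Colto2013}), is that the scattering operator $\mathcal{S}_j = I + c F_j$ with $c := 2\i k |\gamma_d|^2$ is unitary on $L^2(\S)$. This is the operator form of the optical theorem and is closely tied to the normality of $F_j$ already recorded in the text; if one prefers not to quote it, a short Green-identity computation on a large sphere applied to the representation \eqref{eq:farFieldBall} yields both $\mathcal{S}_j^\ast \mathcal{S}_j = I$ and the normality of $F_j$ in one shot.

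Granting unitarity of $\mathcal{S}_j$, the proof becomes a one-liner. Since $c F_j = \mathcal{S}_j - I$ and $c \neq 0$, I rewrite
\[
  \mathcal{S}_2^\ast(F_1 - F_2) \;=\; c^{-1}\, \mathcal{S}_2^\ast(\mathcal{S}_1 - \mathcal{S}_2) \;=\; c^{-1}\bigl(U - I\bigr), \qquad U := \mathcal{S}_2^\ast \mathcal{S}_1,
\]
using $\mathcal{S}_2^\ast \mathcal{S}_2 = I$ in the last step. The operator $U$ is unitary as a product of two unitaries, and a direct expansion gives
\[
  (U - I)(U - I)^\ast \;=\; 2I - U - U^\ast \;=\; (U - I)^\ast (U - I),
\]
so $U - I$ is normal. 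Multiplication by the non-zero scalar $c^{-1}$ preserves normality, which concludes the argument.

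The only genuine obstacle is the unitarity of $\mathcal{S}_2$; the remainder is pure algebra. It is tempting to try to prove the lemma using only the normality of $F_j$ already recorded in the text, but the cancellation that makes the argument work really requires the isometry identity $\mathcal{S}_2^\ast \mathcal{S}_2 = I$, not merely $F_2 F_2^\ast = F_2^\ast F_2$. I would therefore open the proof by recalling this unitarity (with a reference to \cite{Colto2013}) and then carry out the three lines above.
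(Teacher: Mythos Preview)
Your proof is correct and follows essentially the same route as the paper: both invoke unitarity of $\mathcal{S}_{1,2}$, rewrite $\mathcal{S}_2^\ast(F_1-F_2)$ as $c^{-1}(\mathcal{S}_2^\ast\mathcal{S}_1 - I)$, and conclude from the normality (indeed unitarity) of $\mathcal{S}_2^\ast\mathcal{S}_1$. Your version merely makes the final normality check for $U-I$ explicit where the paper leaves it implicit.
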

\begin{proof}
For any far field operator with real-valued contrast, the corresponding scattering operator is unitary.
Thus,
\[
  \mathcal{S}_2^\ast (F_1 - F_2)
  = \frac{1}{2\i k |\gamma_d|^2} \mathcal{S}_2^\ast \left( \mathcal{S}_1 - \mathcal{S}_2 \right)
  = \frac{1}{2\i k |\gamma_d|^2} \left( \mathcal{S}_2^\ast \mathcal{S}_1 - I \right).
\]
As $\mathcal{S}_2^\ast \mathcal{S}_1$ is normal (since $\mathcal{S}_{1,2}$ is unitary), the operator $\mathcal{S}_2^\ast (F_1 - F_2)$ is normal, too.
\end{proof}

\section{Factorization via Herglotz operators}\label{se:oldStyle}
We prove in this section a factorization of $\mathcal{S}_2^\ast (F_1 - F_2)$ using Herglotz operators which shows that the real parts of the eigenvalues of that operator are sign-definite if, roughly speaking, $q_1-q_2$ is either greater or less than zero on $\supp(q_1-q_2)$.
% The structure of that factorization stems back to~\cite{Kirsc1999}.
For scattering from a penetrable medium modeled by the differential equation $\div (A \nabla u) + k^2 (1+q) u = 0$ and additionally containing an inclusion, a related factorization can be found in~\cite[Th.~3.1 \& Th~4.7]{Cakon2015}.
We formulate this lemma using two contrasts $q_{1,2}$ as parameters in the Helmholtz equation~\eqref{eq:helmholtz} and denote the corresponding total, scattered, and far fields for incident plane waves \marker{of direction $\theta \in \S^{d-1}$ by $u_{1,2}(\cdot,\theta)$, $u^s_{1,2}(\cdot,\theta)$, and $u^\infty_{1,2}(\cdot,\theta)$}, as well as the corresponding far field and scattering operators by $F_{1,2}$ and $\mathcal{S}_{1,2}$, respectively.

\begin{lemma}\label{th:diffSign}
  If $q_{1,2}\in L^\infty_\cmp(\R^d)$, then $\mathcal{S}_2^\ast (F_1 - F_2) = H_2^\ast T_{1\& 2} H_2$, where the operator $H_2: \, L^2(\S^{d-1}) \to L^2(\supp(q_1-q_2))$ is defined by
  \begin{equation}\label{eq:herglotzOperator}
    g \mapsto \left. v_g \right|_{\supp(q_1-q_2)},
    \qquad v_g = \int_\S u_2(\cdot,\theta) g(\theta) \dS{(\theta)},
  \end{equation}
  and $T_{1\& 2}$ is defined on $L^2(\supp(q_1-q_2))$ by $T_{1\& 2} f = k^2 (q_1-q_2) \big(f+v|_{\supp(q_1-q_2)} \big)$, where $v \in H^1_\loc(\R^d)$ is the weak, radiating solution to
  \begin{equation}\label{eq:T}%{eq:middleOperator}
    \Delta v + k^2 (1+q_1) v = - k^2 (q_1-q_2) f
    \qquad \text{in } \R^d.
  \end{equation}
  \marker{Both $H_2$ and $T_{1\& 2}$ are continuous and $H_2$ is compact and injective; if $q_{1,2}\in L^\infty_\cmp(\R^d, \R)$ are real-valued, then $\Im T_{1\& 2} \geq 0$, and $q_1\not = q_2$ in $L^2(\supp(q_1-q_2))$ implies that $T_{1\& 2}$ is injective.}
\end{lemma}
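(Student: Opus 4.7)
The plan is to prove the factorization identity on Herglotz densities first, then verify each mapping property of $H_2$ and $T_{1\& 2}$ using standard scattering-theoretic tools. Throughout, let $u_{j,g}(x) := \int_{\S^{d-1}} u_j(x,\theta) g(\theta)\dS{(\theta)}$ and $u^s_{j,g}(x) := \int_{\S^{d-1}} u^s_j(x,\theta) g(\theta)\dS{(\theta)}$ denote the Herglotz-averaged total and scattered fields for contrast $q_j$, so that $H_2 g = u_{2,g}|_{\supp(q_1-q_2)}$. By linearity of the direct problem, for $g \in L^2(\S^{d-1})$ the difference $w := u^s_{1,g} - u^s_{2,g}$ is the radiating solution of $\Delta w + k^2(1+q_1) w = -k^2(q_1-q_2)\, u_{2,g}$ in $\R^d$, which is exactly \eqref{eq:T} with $f = H_2 g$; hence $w$ coincides with the $v$ in the statement and $u_{2,g} + v = u_{1,g}$, giving $T_{1\& 2} H_2 g = k^2(q_1-q_2)\, u_{1,g}|_{\supp(q_1-q_2)}$. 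Pairing against a second density $h$ then yields
\[
  \langle H_2^\ast T_{1\& 2} H_2 g, h\rangle = \int_{\supp(q_1-q_2)} k^2(q_1-q_2)\, u_{1,g}\, \overline{u_{2,h}}\d y,
\]
and the matching formula $\langle \mathcal{S}_2^\ast(F_1-F_2) g, h\rangle = \int_{\supp(q_1-q_2)} k^2(q_1-q_2)\, u_{1,g}\, \overline{u_{2,h}}\d y$ is obtained by applying Green's second identity in a large ball containing all supports and using the Sommerfeld radiation condition together with reciprocity $u^\infty_2(\hat x,\theta) = u^\infty_2(-\theta,-\hat x)$ to convert the boundary terms at infinity into the action of $\mathcal{S}_2^\ast$. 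This last conversion is the step I expect to be the main technical obstacle.

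For $H_2$: continuity is immediate from continuous dependence of the forward problem on the incident data, which yields a uniform-in-$\theta$ bound on $\|u_2(\cdot,\theta)\|_{L^2(K)}$; compactness is then a Hilbert--Schmidt argument using joint continuity of $(x,\theta) \mapsto u_2(x,\theta)$ on $\supp(q_1-q_2)\times\S^{d-1}$. For injectivity, assume $H_2 g = 0$; then $u_{2,g}$ solves Helmholtz with contrast $q_2$ in $\R^d$ and vanishes on $\supp(q_1-q_2)$, so unique continuation propagates the vanishing into the exterior of $\supp(q_2)$, Rellich's lemma forces the scattered part $u^s_{2,g} \equiv 0$, the Herglotz incident wave $v^i_g$ must then vanish identically, and injectivity of the Herglotz transform yields $g = 0$.

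For $T_{1\& 2}$: continuity follows from well-posedness of \eqref{eq:T} together with $q_1-q_2 \in L^\infty$. For $\Im T_{1\& 2} \geq 0$, I would compute
\[
  \Im\langle T_{1\& 2} f, f\rangle = k^2\,\Im\int_{\supp(q_1-q_2)} (q_1-q_2)\, v\, \overline{f}\d y,
\]
use $-k^2(q_1-q_2) f = \Delta v + k^2(1+q_1) v$ to rewrite the integrand, and integrate by parts in a large ball; the real-valuedness of $q_1$ makes the volume contributions real, while the Sommerfeld condition turns the surface term at infinity into $k|\gamma_d|^2 \|v^\infty\|^2_{L^2(\S^{d-1})} \geq 0$. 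Finally, for injectivity of $T_{1\& 2}$ when $q_1 \neq q_2$ in $L^2(\supp(q_1-q_2))$: $T_{1\& 2} f = 0$ forces $(q_1-q_2)(f+v) = 0$ in $\R^d$, which rewrites \eqref{eq:T} as $\Delta v + k^2(1+q_2) v = 0$; uniqueness for the radiating scattering problem with contrast $q_2$ gives $v \equiv 0$, hence $(q_1-q_2) f = 0$ in $\R^d$, and since $q_1-q_2 \neq 0$ a.e. on its essential support we conclude $f = 0$.
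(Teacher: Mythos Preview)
Your arguments for the mapping properties of $H_2$ and for $\Im T_{1\&2}\ge 0$ are essentially those of the paper, so the proposal is correct there. The substantive difference is in how the factorization identity itself is obtained.

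You propose to verify $\mathcal{S}_2^\ast(F_1-F_2)=H_2^\ast T_{1\&2}H_2$ by matching the two bilinear forms $\langle T_{1\&2}H_2 g,H_2 h\rangle$ and $\langle (F_1-F_2)g,\mathcal{S}_2 h\rangle$ via Green's identity plus the reciprocity relation for $u_2^\infty$, and you flag this as the unfinished step. The paper bypasses this by computing $H_2^\ast$ directly: for $f\in L^2(D)$ and $w$ the radiating solution of $\Delta w+k^2(1+q_2)w=-f$, a single integration by parts in $B_R$ yields $(f,H_2 g)_{L^2(D)}=(\mathcal{S}_2^\ast w^\infty,g)_{L^2(\S^{d-1})}$, i.e.\ $\mathcal{S}_2 H_2^\ast f=w^\infty$. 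Since you already observed that $T_{1\&2}H_2 g=k^2(q_1-q_2)u_{1,g}$ and that $u_{1,g}^s-u_{2,g}^s$ is precisely the radiating solution of $\Delta w+k^2(1+q_2)w=-k^2(q_1-q_2)u_{1,g}$, the factorization follows immediately with no appeal to reciprocity. Your bilinear-form route would also work, but it amounts to re-deriving the formula for $H_2^\ast$ inside the pairing, so the paper's version is shorter and more transparent.

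Your injectivity argument for $T_{1\&2}$ is actually cleaner than the paper's: from $(q_1-q_2)(f+v)=0$ you correctly rewrite \eqref{eq:T} as $\Delta v+k^2(1+q_2)v=0$ (the paper writes $1+2q_1-q_2$, which appears to be a slip), whence $v\equiv 0$ by uniqueness for real $q_2$. One small caveat shared by both your argument and the paper's: from $(q_1-q_2)f=0$ on $\supp(q_1-q_2)$ you infer $f=0$ because ``$q_1-q_2\neq 0$ a.e.\ on its essential support,'' but that is not implied by the stated hypothesis $q_1\neq q_2$ in $L^2(\supp(q_1-q_2))$; one needs the mild extra assumption that $q_1-q_2$ is nonzero a.e.\ on its support, which holds in all later applications (e.g.\ $|q_1-q_2|\ge c_0>0$ there).
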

\begin{proof}
(1) Set $D=\supp(q_1-q_2)$, denote by \marker{$v_g^{(2)}=v_g$ the function from~\eqref{eq:herglotzOperator} for some $g\in L^2(\S)$, by $v_g^{(1)} =\int_\S u_1(\cdot,\theta)g(\theta) \dS{(\theta)}$, and by $v_g^{(1,2),s}$} the corresponding two scattered fields for $q_{1,2}$. 
Note that \marker{$v_g^{(1,2)}$} hence solves the differential equation \marker{$\Delta v_g^{(1,2)} + k^2 (1+q_{1,2}) v_g^{(1,2)} = 0$} in $\R^d$.
The difference $\marker{\marker{\widetilde{v}} = v_g^{(1),s} - v_g^{(2),s}} \in H^1_\loc(\R^d)$ is the unique radiating solution to
\begin{equation}\label{eq:aux160}
  \Delta \marker{\widetilde{v}} + k^2 (1+q_1) \marker{\widetilde{v}} = - k^2 (q_1-q_2) v_g^{(2)}
  \qquad \text{in } \R^d.
\end{equation}
This motivates to define $G: \, L^2(D) \to L^2(\S)$ by $G f = \marker{\widetilde{v}}^\infty$, where $\marker{\widetilde{v}} \in H^1_\loc(\R^d)$ is the radiating solution to~\eqref{eq:aux160} with \marker{$v_g^{(2)}$} on the right replaced by $f$ (extended by zero to all of $\R^d$).
Consequently, the definition of $H_2$ in~\eqref{eq:herglotzOperator} shows that $F_1 - F_2 = GH_2$.

(2) To obtain the indicated factorization of $\mathcal{S}_2^\ast (F_1 - F_2)$ we rely on
% the exterior Dirichlet-to-Neumann operator $\Lambda: \, H^{1/2}(\partial B_R) \to H^{-1/2}(\partial B_R)$ for the radiating Helmholtz equation in $\R^d \setminus \overline{B_R}$, with $R$ so large that $\supp(q_1-q_2) \Subset B_R$.
the weak, radiating solution $w \in H^1_\loc(\R^d)$ to
\begin{equation}\label{eq:middleOperator}
    \Delta w + k^2 (1+q_2) w = - f
    \qquad \text{in } \R^d,
\end{equation}
\marker{as well as on the exterior Dirichlet-to-Neumann operator $\Lambda$ for radiating solutions to the Helmholtz equation $\Delta w + k^2 w = 0$ in the exterior of the ball $B_R$, see~\cite{Colto2013}.} 
A partial integration in $B_R$ and the far field representation~\eqref{eq:farFieldBall} show that
\begin{align*}
  (f, H_2 g)_{L^2(D)}
  & = \int_{B_R} \left[ \nabla w \cdot \nabla \overline{v_g} - k^2 (1+q_2) w \overline{v}_g \right] \d{x} - \int_{\partial B_R} \Lambda \big(w|_{\partial B_R}\big) \overline{v_g} \dS{} \\
  & = - \int_{B_R} w \left[ \Delta \overline{v_g} + k^2 (1+q_2) \overline{v}_g \right] \d{x} - \int_{\partial B_R} \left[ \frac{\partial w}{\partial\nu} \overline{v_g} - w \frac{\partial \overline{v_g}}{\partial\nu} \right] \dS{} \\
  & \stackrel{\eqref{eq:herglotzOperator}}{=}- \int_{\partial B_R} \bigg[ \frac{\partial w(y)}{\partial\nu} \int_\S \left( e^{-\i k \, y\cdot \theta}+u_2^s(y,\theta) \right) \overline{g(\theta)} \dS{(\theta)}     \\
  & \qquad \qquad - w(y) \frac{\partial}{\partial\nu(y)} \int_\S \left( e^{-\i k \, y\cdot \theta}+u_2^s(y,\theta) \right) \overline{g(\theta)} \dS{(\theta)} \bigg] \dS{(y)} \\
%  & = - \int_{\S} \overline{g(\theta)} \int_{\partial B_R} \left[ \frac{\partial w(y)}{\partial\nu} e^{-\i k \, y\cdot \theta} - w(y) \frac{\partial}{\partial\nu(y)} e^{-\i k \, y\cdot \theta} \right] \dS{(y)} \dS{(\theta)} \\
%  &\qquad + \int_{\S} \overline{g(\theta)} \int_{\partial B_R} \left[ \frac{\partial w(y)}{\partial\nu} \overline{u^s_2(y,\theta)} - w(y) \frac{\partial \overline{u^s_2(y,\theta)}}{\partial\nu(y)} \right] \dS{(y)} \dS{(\theta)}\\
  & \stackrel{R\to\infty}{\longrightarrow} \int_{\S} w^\infty(\theta) \overline{g(\theta)}  \dS{(\theta)} - 2\i k |\gamma_d|^2 \int_\S w^\infty(\theta) \, \overline{F_2 g(\theta)} \dS{(\theta)},
\end{align*}
where the last term follows by the radiation condition~\eqref{eq:SRC} for the radiating function $w$.
Thus, $H_2^\ast f = w^\infty - 2\i k |\gamma_d|^2 \, F_2^\ast w^\infty = \mathcal{S}_2^\ast w^\infty$ and $\mathcal{S}_2 H_2^\ast f = w^\infty$.

(3) Rephrasing the Helmholtz equation~\eqref{eq:aux160} for $\marker{\widetilde{v}} \in H^1_\loc(\R^d)$ as $\Delta \marker{\widetilde{v}} + k^2 (1+q_2) \marker{\widetilde{v}} = - k^2 (q_1-q_2) (\marker{v_g^{(2)}} + \marker{\widetilde{v}})$ shows that the radiating solution $w$ to~\eqref{eq:middleOperator} with right-hand side $f$ replaced by $-k^2(q_1-q_2)(\marker{v_g^{(2)}} + \marker{\widetilde{v}})$ equals $\marker{\widetilde{v}}$.
Due to part (2) of the proof, we conclude that $\mathcal{S}_2 H_2^\ast  \big( k^2 (q_1-q_2) (\marker{v_g^{(2)}} + \marker{\widetilde{v}}) \big) = \marker{\widetilde{v}}^\infty$.
By~\eqref{eq:T}, there holds that $T_{1\& 2} (\marker{v_g^{(2)}}|_D) =  k^2 (q_1-q_2) (\marker{v_g^{(2)}} + \marker{\widetilde{v}})$ in $L^2(D)$ where $\overline{D}=\supp(q_1-q_2)$, such that
\[
  \mathcal{S}_2 H_2^\ast T_{1\& 2} \big( \marker{v_g^{(2)}}|_{D} \big)
  = \marker{\widetilde{v}}^\infty
  = G \big(\marker{v_g^{(2)}}|_{D} \big)
  \qquad \text{in } L^2(\S). 
\]
As $\marker{v_g^{(2)}}|_{D}= H_2 g$, we conclude that $\mathcal{S}_2 H_2^\ast T_{1\& 2} H_2 g = G H_2 g = (F_1 - F_2)g$.

(4) Continuity of $H_2$ and $T_{1\& 2}$ is clear, as well as the compactness of $H_2$ due to the smoothness of $u_2$.
Injectivity of $H_2$ follows from a unique continuation argument as in the classical case when $q_1$ vanishes.
For $T_{1\& 2}$, injectivity requires that $q_1\not = q_2$, since $T_{1\& 2} f = k^2 (q_1-q_2) (f+v) = 0$ is equivalent to $f=\marker{-}v$ on $\supp(q_1-q_2)$.
The differential equation~\eqref{eq:T} then shows that $v$ is the radiating solution to $\Delta v + k^2 (1+2q_1-q_2) v = 0$ in $\R^d$, such that $v$ must vanish entirely \marker{as $2q_1-q_2$ is real-valued}.

To show that $\Im T_{1\& 2} \geq 0$, we choose $f\in L^2(D) = L^2(\supp(q_1-q_2))$ and extend this function by zero to all of $\R^d$.
Recall that $T_{1\& 2} f = k^2 (q_1-q_2) (f+v|_{D})$, where $v \in H^1_\loc(\R^d)$ is the radiating solution to~\eqref{eq:T}.
Thus, abbreviating the scalar product of $L^2(D)$ by $(\cdot,\cdot)$,
\begin{align*}
  \Im (T_{1\& 2} f, f)
  &= k^2 \Im ((q_1-q_2) (f+v), (f+v)) - k^2 \Im ((q_1-q_2) (f+v), v)\\
  &= k^2 \Im ( (q_1-q_2)v, (f+v) ).
\end{align*}
since $q_{1,2}$ are both real-valued.
We reformulate the equation for $v$ as $\Delta v + k^2 (1+q_2) v = - k^2 (q_1-q_2) (f+v)$ in $\R^d$ and conclude by partial integration that
\begin{align}
  k^2 \Im( (q_1-q_2) v ,  \, (f+v) )
  &= k^2 \Im \int_{D} (q_1-q_2) v  \, (\overline{f}+\overline{v}) \d{x} \label{eq:imagT}\\
  & = \Im \int_{B_R}  v \left[ \Delta \overline{v} + k^2 (1+q_2) \overline{v} \right]  \d{x}
  % & = \Im \int_{B_R} \left[k^2 (1+q_2) |v|^2 - |\nabla v|^2 \right] \d{x} + \Im \int_{\partial B_R} \frac{\partial \overline{v}}{\partial \nu} v  \d{S} \nonumber \\
  =\Im \int_{\partial B_R} \frac{\partial \overline{v}}{\partial \nu} v  \d{S}.  \nonumber
\end{align}
The radiation condition~\eqref{eq:SRC} implies that $\int_{\partial B_R} (\partial \overline{v}/\partial \nu) v \d{S} \stackrel{R\to\infty}{\longrightarrow} (\i k |\gamma_d|^2) \int_\S |v^\infty|^2 \d{S}$, such that $\Im (T_{1\& 2} f, f)_{L^2(D)} \to k |\gamma_d|^2 \| v^\infty \|_{L^2(\S^{d-1})}^2 \geq 0$.
\end{proof}

Due to normality and compactness of $\mathcal{S}_2 ^\ast (F_1 - F_2)$, this operator possesses eigenvalues $\lambda_j = \lambda_j(q_1, q_2)$ and a complete orthonormal system of eigenvectors $\psi_j = \psi_j(q_1,q_2)$ in $L^2(\S)$, such that
\[
  \mathcal{S}_2 ^\ast (F_1 - F_2) g
  = \sum_{j\in\N} \lambda_j (g,\psi_j)_{L^2(\S)} \psi_j
  \qquad
  \text{for all } g \in L^2(\S).
\]

\begin{lemma}\label{th:eigsSign}
  (a) If $q_{1,2}\in L^\infty_\cmp(\R^d,\R)$ are two real-valued contrasts such that $q_1\geq q_2$ in $\R^d$ and $q_1- q_2 \geq c_0>0$ in $\supp(q_1-q_2)$, then $\Re \lambda_j(q_1,q_2) \geq 0$ for all but a finite number of $j\in\N$.
  If $q_1 \leq q_2$ in $\R^d$ and $q_2 - q_1\leq c_0>0$ in $\supp(q_1-q_2)$, then $\Re \lambda_j(q_1,q_2) \leq 0$ for all but a finite number of $j\in\N$. \\
  (b)  Under the assumptions of (a), the sequence of eigenvalues  $\lambda_j(q_1,q_2)$ belongs to the open first quadrant $Q_+ = \{ \Re \xi>0,\, \Im \xi >0 \} \cup \{ 0 \}$ of the complex plane joint with zero if $q_1 \geq q_2$ and $j$ is large enough.
If $q_1 \leq q_2$, the eigenvalues belong to the second quadrant $Q_- = \{ \Re \xi<0,\, \Im \xi >0 \} \cup \{ 0 \}$ of the complex plane joint with zero, if $j$ is large enough.
%
% (c) If $\supp(q_1-q_2)$ is a Lipschitz domain and if $q_1 - q_2$ has a fixed sign in a neighborhood of the boundary of $\supp(q_1-q_2)$, then at most a finite number of eigenvalues $\lambda_j(q_1,q_2)$ can vanish.
%
\end{lemma}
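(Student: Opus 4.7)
The plan is to exploit the factorization $\mathcal{S}_2^\ast (F_1 - F_2) = H_2^\ast T_{1\& 2} H_2$ from Lemma~\ref{th:diffSign} together with normality. Writing $A := \mathcal{S}_2^\ast(F_1-F_2)$, the normal compact $A$ and its adjoint $A^\ast$ share the orthonormal eigenbasis $\{\psi_j\}$, so the compact self-adjoint operator $\Re A := (A+A^\ast)/2$ has eigenvalues $\{\Re \lambda_j\}$. Part~(a) therefore reduces to showing that $\Re A$ has only finitely many negative eigenvalues, which by the minimax/dimension-counting principle for compact self-adjoint operators is equivalent to exhibiting a finite-codimensional closed subspace of $L^2(\S^{d-1})$ on which $(\Re A\, g,g)\ge 0$.

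To construct such a subspace, I decompose $T_{1\& 2} = M + K$ on $L^2(D)$, where $Mf := k^2(q_1-q_2)f$ is multiplication and $Kf := k^2(q_1-q_2)\, v|_D$ with $v \in H^1_\loc(\R^d)$ the radiating solution to~\eqref{eq:T}. Under $q_1-q_2 \geq c_0 > 0$ on $D$, $M$ is self-adjoint with $(Mf,f) \geq k^2 c_0 \|f\|^2$. Standard elliptic regularity and well-posedness of~\eqref{eq:T} yield $f\mapsto v|_D$ bounded from $L^2(D)$ into $H^2(D)$, hence compact into $L^2(D)$ by Rellich's embedding, so $K$ is compact. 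Thus $\Re T_{1\& 2} = M + \Re K$ is a uniformly positive self-adjoint operator plus a compact self-adjoint perturbation, and its spectral subspace $N \subset L^2(D)$ for eigenvalues of $\Re K$ below $-k^2 c_0/2$ is finite-dimensional, with $(\Re T_{1\& 2} f, f) \geq (k^2 c_0/2)\|f\|^2$ on $N^\perp$. The preimage $\tilde Y := \{g : H_2 g \in N^\perp\} = (H_2^\ast N)^\perp$ is closed of finite codimension in $L^2(\S^{d-1})$ since $H_2^\ast N$ is finite-dimensional, and for $g \in \tilde Y$,
\[
  (\Re A\, g, g) = \Re(T_{1\& 2}H_2 g, H_2 g) \geq (k^2 c_0/2)\|H_2 g\|_{L^2(D)}^2 \geq 0.
\]
Dimension counting then caps the number of negative eigenvalues of $\Re A$ by $\dim H_2^\ast N < \infty$, finishing part~(a) when $q_1 \geq q_2$; the reverse sign case is symmetric.

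For part~(b), Lemma~\ref{th:diffSign} already gives $\Im T_{1\& 2} \geq 0$, hence $\Im \lambda_j \geq 0$ for every $j$, and combined with~(a) each sufficiently large $\lambda_j$ lies in the closed first quadrant. To promote this to $Q_+$ I show that $\lambda_j \neq 0$ implies both strict inequalities. If $\Im \lambda_j = 0$, identity~\eqref{eq:imagT} forces $v^\infty \equiv 0$ for the radiating solution $v$ associated with $f = H_2\psi_j$; Rellich's lemma yields $v \equiv 0$ outside $\supp q_1$, and interior unique continuation for~\eqref{eq:T} with vanishing Cauchy data on $\partial\supp q_1$ then gives $v \equiv 0$ on $\R^d$, so~\eqref{eq:T} itself forces $f \equiv 0$ and $\lambda_j = (T_{1\& 2}f, f) = 0$, a contradiction. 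A parallel argument---using that $\Re \lambda_j = 0$ together with $f \neq 0$ constrains $f/\|f\|$ into the finite-dimensional space $N$, and then invoking unique continuation on the eigenvector equation $H_2^\ast T_{1\& 2} f = i\,\Im\lambda_j\,\psi_j$---excludes $\lambda_j$ on the positive imaginary axis as well. \emph{The main obstacle} is the unique-continuation step: for general $L^\infty$ contrasts this is not automatic and typically requires mild regularity of $q_1$ and of $\partial\supp q_1$ (Lipschitz suffices) to invoke Carleman-type estimates across the boundary of the support.
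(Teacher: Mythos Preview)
Your treatment of part~(a) is correct and follows the paper's line: decompose $\Re T_{1\&2}$ as the coercive multiplication operator $f\mapsto k^2(q_1-q_2)f$ plus a compact self-adjoint perturbation, then bound the number of wrong-sign eigenvalues of $\Re A$ by a dimension count.

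Part~(b) contains a genuine gap. For the implication $\Im\lambda_j = 0 \Rightarrow \lambda_j = 0$ you argue that $v^\infty = 0$ forces $v\equiv 0$ via Rellich's lemma together with ``interior unique continuation for~\eqref{eq:T} with vanishing Cauchy data''. But~\eqref{eq:T} is \emph{inhomogeneous} inside $D=\supp(q_1-q_2)$, and unique continuation simply does not apply in that form: any $\tilde v\in C_c^\infty(D)\setminus\{0\}$ satisfies~\eqref{eq:T} with zero Cauchy data on $\partial D$ for the choice $f=-[\Delta\tilde v + k^2(1+q_1)\tilde v]/[k^2(q_1-q_2)]$. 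Hence your flagged regularity concern is beside the point---the step fails structurally, not for lack of smoothness. The paper bypasses this entirely by exploiting the factorization: part~(3) of the proof of Lemma~\ref{th:diffSign} yields $\mathcal S_2\,H_2^\ast T_{1\&2}(H_2\psi_j) = v^\infty$, hence $\mathcal S_2\lambda_j\psi_j = v^\infty$, and unitarity of $\mathcal S_2$ gives $\lambda_j\psi_j=0$ directly from $v^\infty=0$, with no unique continuation required. Your sketch for ruling out infinitely many purely imaginary $\lambda_j$ is also not correct as stated---$\Re\lambda_j=0$ does not force $f/\|f\|\in N$, only that $f$ has a nontrivial $N$-component---though your own part-(a) machinery does settle this once paired with injectivity of $H_2$: the span of such eigenvectors intersected with your finite-codimensional $\tilde Y$ would be infinite-dimensional, yet every nonzero $g$ in it would satisfy $0=(\Re A\,g,g)\ge (k^2c_0/2)\|H_2 g\|^2$, contradicting $\ker H_2=\{0\}$.
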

\begin{proof}
(a)
Assume for a moment that we have already proven that $\Re T_{1\& 2} = T_0 + K$ equals a \marker{self-adjoint} positive (or negative) definite operator $T_0$ plus a compact \marker{self-adjoint} perturbation $K$ if $q_1 \geq q_2$ in $\R^d$ (or $q_1 \leq q_2$ in $\R^d$).
As the arguments for negative definite $T_0$ are analogous to those for positive $T_0$, we merely consider positive definite $T_0$ from now on and abbreviate $D := \supp(q_1-q_2)$.
The factorization $\mathcal{S}_2^\ast (F_1-F_2) = H_2^\ast T_{1\& 2} H_2$ then implies that
\begin{align}
  \Re \big(\mathcal{S}_2^\ast (F_1-F_2) g,g \big)_{L^2(\S)}
  % = \Re \big(T_{1\& 2} H_2 g, H_2g \big)_{L^2(D)}  \\
  & = \Re \big(T_0 H_2 g, H_2g \big)_{L^2(D)} + \Re \big(K H_2 g, H_2g \big)_{L^2(D)} \nonumber\\
  & = \marker{\big(T_0 H_2 g, H_2g \big)_{L^2(D)} + \big(K H_2 g, H_2g \big)_{L^2(D)}} \nonumber\\ 
  & \geq c_0 \| H_2g \|_{L^2(D)}^2 + \Re \big(K H_2 g, H_2 g \big)_{L^2(D)}.\label{eq:operatorProductLowBound}
\end{align}
Plugging in the eigenvectors $\psi_j$ for $g$ and dividing by $\| H_2 \psi_j \|^2_{L^2(D)}$ hence yields that
\begin{equation}\label{eq:realBound}
  \frac{\Re \lambda_j}{\| H_2 \psi_j \|_{L^2(D)}^2} \geq c_0  + \bigg(K \frac{H_2 \psi_j}{\| H_2 \psi_j \|_{L^2(D)}}, \frac{H_2 \psi_j}{\| H_2 \psi_j \|_{L^2(D)}} \bigg)_{L^2(D)},
  \qquad j\in\N .
\end{equation}
If an infinite number of eigenvalues \marker{$\lambda_j$} has negative real part, $-K$ would be positive on an infinite-dimensional subspace, which is impossible by compactness of $K$.

We still need to show that $\Re T_{1\& 2} = T_0 + K$ is sum of a \marker{self-adjoint} positive definite operator $T_0$ plus a compact \marker{self-adjoint} perturbation $K$.
As in part (4) of the proof of Lemma~\ref{th:diffSign},
\begin{equation}\label{eq:aux252}
  \Re \big(T_{1\& 2} f, h \big)_{L^2(D)}
  = k^2 \int_D (q_1-q_2) f \, \overline{h} \d{x} + k^2 \Re ((q_1-q_2) v, h)_{L^2(\marker{B_R})}
\end{equation}
for $f,h \in L^2(D)$ extended by zero to all of $\R^d$, $v \in H^1_\loc(\R^d)$ the radiating solution to~\eqref{eq:T}, and \marker{$R$ so large that $\overline{D} \subset B_R$}. 
In particular, $v|_{D} \in H^1(B_R)$ depends continuously on $f\in L^2(D)$.  
Compactness of the embedding of $H^1(B_R)$ in $L^2(B_R)$ hence shows compactness of the sesquilinear form on the right of~\eqref{eq:aux252} on \marker{$L^2(B_R) \times L^2(B_R)$}. 
\marker{This motivates to define the self-adjoint positive definite operator $T_0: \, f \mapsto k^2 (q_1-q_2) f$ and the compact self-adjoint operator $K: \, f \mapsto k^2 (K_0+K_0^\ast)/2$ with $K_0 f = (q_1-q_2) v$ for $v \in H^1_\loc(\R^d)$ solving~\eqref{eq:T}.} 

(b) We merely show that $q_1 \geq q_2$ in $\R^d$ implies that $\Im \lambda_j >0$ and $\Re \lambda_j >0$ for $j$ large enough.
(The case $q_1 \leq q_2$ is handled analogously.)
Note that we already know from Lemma~\ref{th:diffSign} that $\Im \lambda_j \geq 0$.
If $\Im \lambda_j$ vanishes, then part (4) of the proof of Lemma~\ref{th:diffSign} shows that the far field $v_j^\infty$ of the solution $v_j$ to~\eqref{eq:T} with right hand side $-k^2 (q_1-q_2) T H_2 \psi_j$ vanishes.
In particular, the factorization and the eigenvalue equation imply that
\[
  \mathcal{S}_2^\ast (F_1 - F_2)\psi_j = H_2^\ast T_{1\& 2} H_2 \psi_j = \lambda_j \psi_j = w_j^\infty = 0,
\]
such that $\lambda_j$ vanishes.
Thus, no eigenvalue can belong to $\R \setminus \{ 0 \}$.
Assume next for contradiction that $\Re \lambda_j = 0$ for infinitely many $j\in\N$.
Without loss of generality, we can hence assume that $\Re \lambda_j = 0$ for all $j>N\in\N$.
As $H_2$ is injective by Lemma~\ref{th:diffSign}, the closure of $\mathrm{span}\{ H_2 \psi_j , \, j \in \N\}$ in $L^2(D)$ has infinite dimension.
Thus,~\eqref{eq:realBound} implies for the infinite-dimensional set of unit vectors $\varphi_j = H_2 \psi_j / \| H_2 \psi_j \|_{L^2(D)}$ that $0<c_0 \leq (-K \varphi_j,\varphi_j)_{L^2(D)}$.
The compactness argument from the end of part (a) again yields a contradiction.
%
%(c) If $q_1\not = q_2$, then $H_2$ and $T_{1\& 2}$ are injective by Lemma~\ref{th:diffSign}.
%Thus, if infinitely many eigenvalues $\lambda_j$ vanish, then part (4) of the proof of Lemma~\ref{th:diffSign} shows that the far field of the solution $w_j$ to~\eqref{eq:middleOperator} with right hand side $-k^2 (q_1-q_2) T H_2 \psi_j$ must vanish.
%Thus, the unique continuation property shows that $\big(w_j|_{\supp(q_1-q_2)}, (H_2 \psi_j)|_{\supp(q_1-q_2)}\big)$ is a transmission eigenpair for the pair of contrasts $(q_1,q_2)$.
%\marker{[\dots fill in details if necessary \dots]}
%If $q_1 - q_2$ has a fixed sign in a neighborhood of the boundary of the Lipschitz domain $D$, then the set of transmission eigenvalues is discrete and each eigenspace is finite dimensional, which implies that at most a finite number of eigenvalues $\lambda_j$ can vanish.
\end{proof}

The last result shows the following monotonicity result:
The assumption that $q_1-q_2 \gtreqless 0$ implies, roughly speaking, that the real part of all but a finite number of the eigenvalues of $\mathcal{S}_2^\ast (F_1-F_2)$ is positive (or negative) as well.
If $\supp(q_1) = \supp(q_2)$ we will substantially refine this result in the next section by proving an even stronger monotonicity between the values of $q_1 - q_2$ on the boundary of $\supp(q_{1,2})$ and the real parts of the eigenvalues of $\mathcal{S}_2^\ast (F_1-F_2)$ (see Theorem~\ref{lemma:E1}).

Moreover, if $1+q_2$ is the refractive index of a known background medium that is perturbed by $q_1$, the results from this section show the following characterization of $\supp(q_1-q_2)$ via $F_1$ or via $\mathcal{S}_2^\ast(F_1-F_2)$, as $F_2$ and $\mathcal{S}_2$ can be computed from $q_2$ (see also~\cite{Cakon2015} for related results).
To this end, we denote by $G(\cdot,z) \in H^1_\loc(\R^d \setminus \{z \})$ the Green's function for the known background medium $1+q_2$, i.e., the distributional solution to
\begin{equation}\label{eq:GrensFuncQ1}
  \Delta G(\cdot, z) + k^2 (1+q_2) G(\cdot, z) = -\delta_z \in \R^d
\end{equation}
that satisfies Sommerfeld's radiation condition~\eqref{eq:SRC}.
(In~\eqref{eq:GrensFuncQ1}, $\delta_z$ is the Dirac distribution at $z\in \R^d$.)
This radiation condition is well-defined since $(\Delta + k^2) G(\cdot,z) = 0$ outside of $\supp(q_2)\cap \{z\}$, such that $G(\cdot,z)$ is a smooth solution to the Helmholtz equation outside some ball $B(0,R)$ with $R>0$ large enough.
In consequence, $G(\cdot,z)$ possesses a far field $G^\infty(\cdot, z)$.

\begin{theorem}\label{th:diffF}
Assume that $q_{1,2}\in L^\infty_\cmp(\R^d,\R)$ are two different real-valued contrasts such that either $q_1\geq q_2$ in $\R^d$ and $q_1- q_2 \geq c_0>0$ in $\supp(q_1-q_2)$ or else $q_1 \leq q_2$ in $\R^d$ and $q_2 - q_1\leq c_0>0$ in $\supp(q_1-q_2)$.
Further, set $M = \mathcal{S}_2^\ast(F_1-F_2)$.
Then $z\in \R^d$ belongs to $\supp(q_1-q_2)$ if and only if $\mathcal{S}_2^\ast G^\infty(\cdot,z)$ belongs to the range of the square root of the self-adjoint, compact, and non-negative operator $M_\sharp = |\Re M| + \Im M$ on $L^2(\S^{d-1})$.
\end{theorem}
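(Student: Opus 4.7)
My plan is to combine the factorization $M = H_2^\ast T_{1\& 2} H_2$ from Lemma~\ref{th:diffSign} with the range identity
\[
  \mathrm{Range}(M_\sharp^{1/2}) = \mathrm{Range}(H_2^\ast)
\]
that is the technical core of the factorization method (see~\cite{Kirsc2008}), and then identify when the test function $\mathcal{S}_2^\ast G^\infty(\cdot,z)$ lies in $\mathrm{Range}(H_2^\ast)$ by an explicit sampling-type construction. The hypotheses of the abstract range identity are almost entirely in hand already: Lemma~\ref{th:diffSign} provides compactness and injectivity of $H_2$, boundedness and injectivity of $T_{1\& 2}$ (using $q_1 \ne q_2$), and $\Im T_{1\& 2}\ge 0$; the proof of Lemma~\ref{th:eigsSign}(a) furnishes the decomposition $\Re T_{1\& 2} = T_0 + K$ with $T_0 f = k^2 (q_1-q_2) f$ self-adjoint and (up to sign) coercive under the hypothesis of the theorem, and $K$ compact self-adjoint. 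Normality of $M$ from Lemma~\ref{eq:diffNormal} together with $\Im M \ge 0$ makes $M_\sharp = |\Re M| + \Im M$ well defined, self-adjoint, compact, and non-negative, so $M_\sharp^{1/2}$ exists via functional calculus. All the prerequisites of the abstract theorem are therefore present.

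Next I would identify $\mathrm{Range}(H_2^\ast)$ with the set of test functions in the statement. Part~(2) of the proof of Lemma~\ref{th:diffSign} yields $H_2^\ast f = \mathcal{S}_2^\ast w^\infty$, where $w \in H^1_\loc(\R^d)$ is the radiating solution of~\eqref{eq:middleOperator}; because $\mathcal{S}_2$ is unitary, $\mathcal{S}_2^\ast G^\infty(\cdot,z) \in \mathrm{Range}(H_2^\ast)$ is equivalent to the existence of $f \in L^2(D)$, $D = \supp(q_1-q_2)$, with $w^\infty = G^\infty(\cdot,z)$. For $z$ in the interior of $D$ I would produce such an $f$ by the standard cutoff construction: pick $\chi \in C^\infty_\cmp(\R^d)$ with $\chi \equiv 1$ near $z$ and $\supp\chi \subset D$, set $w := (1-\chi)G(\cdot,z)$, and read off $f = 2\, \grad\chi \cdot \grad G(\cdot,z) + (\Delta\chi) G(\cdot,z)$, which lies in $L^2(D)$ because every derivative of $\chi$ vanishes in a neighborhood of the singular point $z$, and which satisfies $\Delta w + k^2(1+q_2) w = -f$ by a direct product-rule computation exploiting that $(1-\chi)\delta_z = 0$; since $w$ agrees with $G(\cdot,z)$ outside $\supp \chi$, the far fields coincide. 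For $z \notin \overline{D}$ I would argue by contradiction: assuming such an $f$ existed, $u := w - G(\cdot,z)$ would be a radiating solution of $\Delta u + k^2(1+q_2)u = 0$ in $\R^d \setminus (\overline{D}\cup\{z\})$ with vanishing far field, so Rellich's lemma together with unique continuation for the Helmholtz equation with $L^\infty$ refractive index forces $u \equiv 0$ in the unbounded component of $\R^d \setminus \overline{D}$, which contradicts the singularity of $G(\cdot,z)$ at $z$ against the local $H^2$-regularity of $w$ there (since $f$ is supported in $D$ and $z \notin \overline D$).

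The main obstacle I expect is the treatment of $z \in \partial D$: the cutoff construction requires an interior neighborhood of $z$ inside $D$, and the exterior Rellich argument likewise breaks down there, so without some regularity assumption on $\partial D$ the boundary case must be handled separately, typically by an approximation from the interior using the characterization of $\mathrm{Range}(M_\sharp^{1/2})$ through the Picard criterion. A secondary concern is the precise statement of unique continuation used for the equation $\Delta + k^2(1+q_2)$ with merely $L^\infty$ coefficients, but this is standard in the scattering literature. With these two points addressed, the theorem follows by stitching together the abstract range identity with the explicit test-function analysis.
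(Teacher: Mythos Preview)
Your proposal is correct and follows essentially the same strategy as the paper: verify the hypotheses of the abstract range identity from the factorization-method literature via Lemmas~\ref{th:diffSign} and~\ref{th:eigsSign} to obtain $\mathrm{Range}(M_\sharp^{1/2}) = \mathrm{Range}(H_2^\ast)$, and then characterize membership of the test function in $\mathrm{Range}(H_2^\ast)$. The only difference is cosmetic: where you spell out the sampling argument (cutoff for interior $z$, Rellich plus unique continuation for exterior $z$) and flag the boundary case, the paper simply invokes Theorem~4.5 of~\cite{Cakon2015} for this step and Theorem~2.15 of~\cite{Lechl2009a} for the range identity.
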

\begin{proof}
We merely treat the case that $q_1\geq q_2$ in $\R^d$ and $q_1- q_2 \geq c_0>0$ in $\supp(q_1-q_2)$; the other case follows analogously.
Lemma~\ref{th:diffSign} and~\ref{th:eigsSign} show that $H_2$ is compact and injective and that $T_{1\& 2}$ is injective with non-negative imaginary part; moreover, $\Re T_{1\& 2}$ is a compact perturbation of a coercive operator, as shown in the proof of Lemma~\ref{th:eigsSign}.
The factorization $\mathcal{S}_2^\ast(F_1-F_2) = H_2^\ast T_{1\& 2} H_2$ then shows that the ranges of $H_2^\ast$ and of the square root of $M_\sharp = |\Re M| + \Im M$ are equal, see Theorem 2.15 in~\cite{Lechl2009a}.
(Since $M_\sharp$ is non-negative, compact and self-adjoint, such a square root can be defined, e.g., using a functional calculus for compact and self-adjoint operators.)
% Moreover, we showed in part (2) of the proof of Lemma~\ref{th:diffSign} that $\mathcal{S}_2 H_2 f = w^\infty$ holds for all $f\in L^2(\supp(q_1-q_2))$ with associated radiating solution $w \in H^1_\loc(\R^d)$ to $\Delta w + k^2 (1+q_2) w = -f$ in $\R^d$.
In addition, Theorem 4.5 in~\cite{Cakon2015} shows that $\mathcal{S}_2^\ast G(\cdot,z)$ belongs to the range of $H_2^\ast$ if and only if $z \in \supp(q_1-q_2)$, which yields the claim.
\end{proof}

\section{Factorization via Dirichlet-to-Neumann operators}\label{se:facDtN}
In this section we prove a second factorization of $\mathcal{S}_2^\ast (F_1-F_2)$ using Dirichlet-to-Neumann (DtN) operators.
This factorization requires more smoothness than the one from the last section; under these assumptions, however, it shows a monotonicity relation between the real part of all but a finite number of the eigenvalues of $\mathcal{S}_2^\ast (F_1-F_2)$ and the sign of the restriction of $q_1-q_2$ to the boundary of, roughly speaking, the union of the joint support of $q_{1,2}$.

Despite we require more smoothness later on, assume for the moment that the contrasts $q_{1,2} \in L^\infty_\cmp(\R^d)$ are bounded and measurable with supports $\overline{D_{1,2}}:=\supp q_{1,2} \subset \R^d$ for Lipschitz domains $D_{1,2}$.
Further, we set $G$ to be the unbounded connected component of the complement of $D_1 \cup D_2$, define $D_{1\&2} = \R^d \setminus G$ (this is the smallest set without holes containing $D_1$ and $D_2$), and assume that $D_{1\&2}$ is a Lipschitz domain as well, see Figure~\ref{fig:0}.

\begin{figure}[t!!!!!h!!!b!!]
  \centering
  \includegraphics[width=0.3\linewidth]{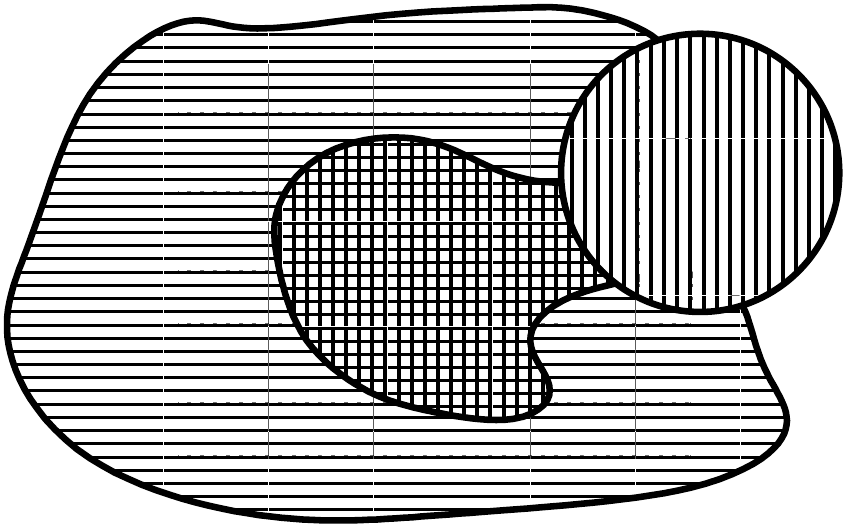}
  \begin{picture}(00,00)
    \put(-5.2,0.6){$D_1$}
    \put(0.0,1.7){$D_2$}
  \end{picture}
  \caption{Sketch of domains $D_1$ (left, horizontal lines) and $D_2$ (right, vertical lines); $D_{1\&2}$ is the union of $D_1$ and $D_2$ with the crossed region in the middle.}
  \label{fig:0}
\end{figure}

We assume that $k^2$ is not an interior Dirichlet eigenvalue of the negative Laplacian in $D_{1,2}$ or $D_{1\&2}$ and rely on various interior and exterior DtN operators for the Helmholtz equation.

For the homogeneous Helmholtz equation, and \marker{$D_j$} equal to either $D_{1,2}$ or $D_{1\&2}$,
\begin{equation}\label{NN1}
  N^{\out}_{D_j}: \, H^{1/2}(\partial D_j)\to H^{-1/2}(\partial \marker{D_j}), \qquad \psi \mapsto \left. \frac{\partial v}{\partial \nu} \right|_{\partial \marker{D_j}},
\end{equation}
maps Dirichlet boundary values to the Neumann boundary values of the unique radiating solution to the exterior boundary value problem $\Delta v + k^2 v = 0$ in $\R^d \setminus \overline{\marker{D_j}}$ subject to $v|_{\partial \marker{D_j}} = \psi$.
Note that $\nu$ is, as in the previous sections, the outer unit normal to \marker{$D_j$}.
Further, for \marker{$D_j$ equal to $D_{1,2}$ or $D_{1\&2}$ and $q_\ell$ equal to $q_{1,2}$ or $q_1+q_2$}, 
\begin{equation}\label{NN0}
N^\inn_{D_j,q_\ell}: \, H^{1/2}(\partial \marker{D_j})\to H^{-1/2}(\partial \marker{D_j}), \qquad \psi \mapsto \left. \frac{\partial v}{\partial \nu} \right|_{\partial D_j},
\end{equation}
maps Dirichlet boundary values to the Neumann boundary values of the unique radiating solution to the corresponding interior boundary value problem $\Delta v + k^2 (1+ \mathbbm{1}_{D_j} q_\ell) v = 0$ in $D_j$ subject to $v|_{\partial D_j} = \psi$.
(See~\cite[Ch.~4]{McLea2000} for such existence results.)
By $N^\inn_{D_j,0}$ we denote the corresponding operators for the Helmholtz equation $\Delta v + k^2 v = 0$ in $D_j$ without contrast function, i.e., for constant coefficients.
All these interior boundary value problems are assumed to be uniquely solvable.

Note that the difference $N^\inn_{D_j,q_j} - N^\out_{D_j}:\, H^{1/2}(\partial \marker{D_j})\to H^{-1/2}(\partial \marker{D_j})$ then maps Dirichlet trace values $\psi$ to the jump $\phi$ across $\partial D_j$ of the normal derivative of the unique radiating solution $u \in H^1_\loc(\R^d \setminus \partial D_j)$ to the transmission problem
\begin{multline} \label{eq:transmissionProblem}
  \Delta u + k^2 (1+\mathbbm{1}_{D_j} q_j) u = 0 \quad \text{in } \R^d \setminus \partial D_j, \\
  [u]_{\partial D_j} = 0 \quad \text{in } H^{1/2}(\partial D_j),
  \qquad
  \left[ \frac{\partial u}{\partial \nu}\right]_{\partial D_j} = \phi \in H^{-1/2}(\partial D_j).
\end{multline}
(See~\cite[Ch.~4]{McLea2000} for existence theory to this problem; $[v]_{\partial D_j}$ denotes the jump of $v$ from the outer to the inner trace on $D_j$.)
Indeed,
\begin{equation}\label{NN}
  N^\inn_{D_j,q_j} \psi - N^\out_{D_j} \psi = \left. \frac{\partial u}{\partial \nu}\right|_{\partial D_j}^- - \left. \frac{\partial u}{\partial \nu}\right|_{\partial D_j}^+
  = \left[ \frac{\partial u}{\partial \nu}\right]_{\partial D_j}
  = \phi \quad \text{in } H^{1/2}(\partial D_j).
\end{equation}
As the transmission problem~\eqref{eq:transmissionProblem} is uniquely solvable, the mapping $\phi \mapsto \psi$ is bounded from $H^{-1/2}(\partial D_j)$ into $H^{1/2}(\partial D_j)$ and defines the inverse to $\psi \mapsto N^\inn_{D_j,q_j} \psi - N^\out_{D_j}  \psi$.
Thus, $N^\inn_{D_j,q_j} - N^\out_{D_j} $ is boundedly invertible from $H^{1/2}(\partial \marker{D_j})$ into $H^{-1/2}(\partial \marker{D_j})$.

We now prove a relation between DtN operators and far-field operators $F_{1,2}$ where the link between far fields on the sphere and quantities on the boundary of the scatterer is played by the operator $L_j : \, L_2(\S^{d-1}) \to H^{1/2}(\partial D_j)$ defined by
\begin{equation}\label{lstar}
(L_j g)(y)= \int_{\S^{d-1}} e^{ik\, y\cdot \hat{x}} g(\hat{x}) \dS{(\hat{x})}, \qquad g\in L_2(\S^{d-1}), \, y \in \partial D_j.
\end{equation}
This is hence the restriction of a Herglotz wave function $v_g$ from~\eqref{eq:herglotzOperator} to $\partial D_j$ \marker{where $D_j \in \{ D_{1,2}, D_{1\& 2} \}$}.
Its $L^2$-adjoint is $L_j^\ast: \, H^{-1/2}(\partial D_j) \to L^2(\S^{d-1})$ mapping $v$ to $\hat{x} \mapsto\int_{\partial D_j} e^{-ik\, \hat{x} \cdot y} v(y) \, \dS{(y)}$.

\begin{theorem}\label{t21}
For $j=1,2$, the far-field operator $F_j$ satisfies
\begin{equation}\label{FFF}
 F_j = L_j^\ast \, (  N^\inn_{D_j,0} - N^\out_{D_j} ) (N^\inn_{D_j,q_j} - N^\out_{D_j} )^{-1} (N^\inn_{D_j,0} - N^\inn_{D_j,q_j}) \, L_j.
\end{equation}
\end{theorem}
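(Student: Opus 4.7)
Fix $g \in L^2(\S^{d-1})$ and let $v_g$ denote the corresponding Herglotz wave function, so that $v_g$ is an entire solution of $\Delta v_g + k^2 v_g = 0$ in $\R^d$ with $v_g|_{\partial D_j} = L_j g$. Let $u = v_g + u^s$ be the total field of the scattering problem for contrast $q_j$ with incident wave $v_g$, so that $u^s$ is the corresponding radiating scattered field and, by the definition~\eqref{eq:Fq} of $F_j$ and linearity, $F_j g = u^\infty$. The plan is to rewrite $u^\infty$ as a composition of boundary operators on $\partial D_j$ acting on $L_j g$.

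The first step pushes the far-field representation~\eqref{eq:farFieldBall} down from $\partial B_R$ to $\partial D_j$: applying Green's second identity to $u^s$ and the plane wave $y\mapsto e^{-\i k \hat{x}\cdot y}$ on $B_R \setminus \overline{D_j}$, letting $R\to\infty$, and using Sommerfeld's radiation condition for $u^s$ yields
\begin{equation*}
u^\infty(\hat{x}) = \int_{\partial D_j}\Big[u^s|_{\partial D_j}\,\tfrac{\partial}{\partial \nu}e^{-\i k \hat{x}\cdot y} - \tfrac{\partial u^s}{\partial\nu}\Big|^+ e^{-\i k \hat{x}\cdot y}\Big]\dS{(y)}.
\end{equation*}
Since the plane wave is an entire solution of the homogeneous Helmholtz equation, its Neumann trace on $\partial D_j$ equals $N^\inn_{D_j,0}$ applied to its Dirichlet trace; since $u^s$ is the radiating exterior solution, one has $\partial_\nu u^s|^+ = N^\out_{D_j}(u^s|_{\partial D_j})$. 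Green's second identity applied in $D_j$ to two Helmholtz solutions shows that $N^\inn_{D_j,0}$ is symmetric in the unconjugated $L^2(\partial D_j)$ pairing, so $N^\inn_{D_j,0}$ can be transferred from the plane wave onto $u^s$. The remaining boundary pairing against $e^{-\i k \hat{x}\cdot y}$ is by definition $L_j^\ast$, giving
\begin{equation*}
F_j g = L_j^\ast\,(N^\inn_{D_j,0} - N^\out_{D_j})(u^s|_{\partial D_j}).
\end{equation*}

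To close the argument I express $u^s|_{\partial D_j}$ in terms of $L_j g$ via the transmission conditions on $\partial D_j$, namely continuity of $u$ and of $\partial_\nu u$. On the inner side the normal derivative equals $N^\inn_{D_j,q_j}(u|_{\partial D_j})$, while on the outer side it decomposes as $\partial_\nu v_g + \partial_\nu u^s|^+ = N^\inn_{D_j,0}(L_j g) + N^\out_{D_j}(u^s|_{\partial D_j})$, exploiting that $v_g$ is entire. Equating and substituting $u|_{\partial D_j} = L_j g + u^s|_{\partial D_j}$ rearranges into
\begin{equation*}
(N^\inn_{D_j,q_j} - N^\out_{D_j})(u^s|_{\partial D_j}) = (N^\inn_{D_j,0} - N^\inn_{D_j,q_j})(L_j g).
\end{equation*}
By the bounded invertibility of $N^\inn_{D_j,q_j} - N^\out_{D_j}$ established just before the theorem (via unique solvability of the transmission problem~\eqref{eq:transmissionProblem}), I solve for $u^s|_{\partial D_j}$ and substitute into the expression from the second paragraph to obtain~\eqref{FFF}. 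The only delicate point is the bilinear (not sesquilinear) symmetry of $N^\inn_{D_j,0}$ invoked above, which is what makes $L_j^\ast$ (rather than a Hermitian adjoint with conjugated exponential) appear; this relies on the standing assumption that $k^2$ is not a Dirichlet eigenvalue of $-\Delta$ in $D_j$, while the remaining steps are standard manipulations of interior/exterior traces.
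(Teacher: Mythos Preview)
Your proof is correct and follows essentially the same route as the paper's. The only cosmetic difference is that the paper first writes $u^s$ via Green's representation with the fundamental solution $\Phi$ and then applies Green's second identity to $\Phi(x,\cdot)$ and the interior Helmholtz solution with Dirichlet data $u^s|_{\partial D_j}$ to convert the double-layer term into $\Phi\cdot N^\inn_{D_j,0}u^s$, before passing to the far field; you instead pass to the far field immediately and perform the analogous step directly with the plane wave $e^{-\i k\hat{x}\cdot y}$ via the bilinear symmetry of $N^\inn_{D_j,0}$. Since the far field of $\Phi(\cdot-y)$ is precisely that plane wave, the two computations are the same manipulation carried out in a different order, and the transmission-condition argument in your final paragraph coincides with the paper's verbatim.
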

\begin{proof}
We restrict ourselves to $j=1$, omit this index in this proof for all operators, fields, and domains, and denote by $\Phi$ the radiating fundamental solution of the Helmholtz equation with wave number $k^2$. By Green's representation theorem, the scattered wave $u^s$ for an incident Herglotz wave function $u^i(x) = \int_{\S^{d-1}} \exp(\mathrm{i} k\, x \cdot \theta) g(\theta) \dS{(\theta)}$ can be written as
$$
u^s(x)=\int_{\partial D}\left (\frac{\partial \Phi(x-y)}{\partial \nu(y)}u^s(y)-\Phi(x-y)\frac{\partial u^s}{\partial \nu} (y)\right ) \dS{(y)}, \quad x\in \mathbb R^d\setminus \overline{D}.
$$
Green's second identity applied to $\Phi(x,\cdot)$ and the solution of the Helmholtz equation in $D$ with the Dirichlet data $u^s|_{\partial D}$ at the boundary implies that
\begin{eqnarray}\nonumber
\int_{\partial D}\frac{\partial \Phi(x-y)}{\partial \nu(y)} u^s(y) \dS{(y)}
= \int_{\partial D} \Phi(x-y) N^\inn_{D,0} u^s(y) \dS{(y)}, \quad x\in \mathbb R^d\setminus \overline{D}.
\end{eqnarray}
Thus,
\begin{eqnarray}\nonumber
 u^s(x) = \int_{\partial D} \Phi(x-y) \ ( N^\inn_{D,0} u^s- N^\out_{D} u^s) (y) \dS{(y)}, \quad x\in \mathbb R^d\setminus \overline{D}.
\end{eqnarray}
As the far field of $\Phi(\cdot-y)$ equals $\hat{x} \mapsto \exp(-\i k \, \hat{x} \cdot y)$, the far field $u^\infty$ of $u^s$ satisfies
\begin{equation}\label{scatc2}
  u^\infty
  =  L^\ast ( N^\inn_{D,0} u^s - N^\out u^s)
  \quad
  \text{in } L^2(\S^{d-1}).
\end{equation}
It remains to express $u^s$ on $\partial D$ via the Herglotz wave operator $L g$ from~\eqref{lstar} that defines the restriction of the incident field $u^i$ to $\partial D$.
Note that the total field $u^i + u^s$ satisfies $N^\inn_{D,q} (u^i + u^s) = \partial u^i / \partial \nu + \partial u^s / \partial \nu$ in $H^{-1/2}(\partial D)$.  
Further, $\partial u^i / \partial \nu = N^\inn_{D,0} u^i$ whereas $\partial u^s / \partial \nu = N^\out_D u^s$, such that we conclude that 
\begin{align*}
  (N^\inn_{D,q} - N^\out_{D}) u^s |_{\partial D}
%   = \left[ \left. \frac{\partial (u^i+u^s)}{\partial \nu} \right|_{\partial D} \right]
  = \left(N^\inn_{D,0} - N^\inn_{D,q} \right) u^i |_{\partial D} 
  & =  \left(N^\inn_{D,0} - N^\inn_{D,q}\right) L g
\end{align*}
holds in $H^{-1/2}(\partial D)$.
The bounded invertibility of $N^\inn_{D,q}-N^\out_{D}$ together with (\ref{scatc2}) now completes the proof.
\end{proof}

The last proof can be modified in the following way:
If $h$ denotes the restriction of an incident Herglotz wave function $u^i$ to $\partial D_{1\&2}$ (see Figure~\ref{fig:0}), and if $u^s_j$ denotes the solution to the scattering problem for contrast $q_j$, then $N^\inn_{D_{1\&2},q_j} h = \partial u_j^s / \partial \nu$ as well as $N^\inn_{D_{1\&2},0} h = \partial u^i / \partial \nu$ holds in $H^{-1/2}(\partial D_{1\&2})$.
% (Recall that $D_{1\&2}$ is the complement of the unbounded component of the complement of $D_1 \cup D_2$, where $\overline{D_{1,2}}= \supp q_{1,2}$.)
The last proof hence also shows the following result.

\begin{corollary}\label{t22}
For $j=1,2$, the far-field operator $F_j$ satisfies
\begin{equation}\label{FFF3}
 F_j = L_{1\&2}^\ast \, ( N^\inn_{D_{1\&2},0} - N^\out_{D_{1\&2}} ) (N^\inn_{D_{1\&2},q_j} - N^\out_{D_{1\&2}} )^{-1} (N^\inn_{D_{1\&2},0} - N^\inn_{D_{1\&2},q_j}) \, L_{1\&2}.
\end{equation}
\end{corollary}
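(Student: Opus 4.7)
The plan is to observe that the proof of Theorem~\ref{t21} is local to the chosen reference boundary, and that all its ingredients remain valid when $\partial D_j$ is replaced by $\partial D_{1\&2}$. The two properties that matter are: (i) $u^s_j$ is a radiating solution of the homogeneous Helmholtz equation in the exterior of $D_{1\&2}$; (ii) the total field $u_j = u^i + u^s_j$ lies in $H^2_\loc(\R^d)$ and satisfies $\Delta u_j + k^2(1+q_j) u_j = 0$ in $D_{1\&2}$ with continuous Cauchy data across $\partial D_{1\&2}$. Both hold for free, since $\supp q_j \subset \overline{D_j} \subset \overline{D_{1\&2}}$ gives $\mathbbm{1}_{D_{1\&2}} q_j = q_j$ and since $u^s_j$ solves $\Delta u^s_j + k^2 u^s_j = 0$ in $\R^d \setminus \overline{D_{1\&2}}$.

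Granted these two observations, I would rerun the three steps of the proof of Theorem~\ref{t21} with $D$, $\partial D$, $L$, and $N^{\inn/\out}_{D,\cdot}$ replaced by $D_{1\&2}$, $\partial D_{1\&2}$, $L_{1\&2}$, and $N^{\inn/\out}_{D_{1\&2},\cdot}$ throughout. Green's representation of $u^s_j$ in $\R^d\setminus \overline{D_{1\&2}}$ combined with Green's second identity applied in $D_{1\&2}$ to $\Phi(x,\cdot)$ and the interior solution of $\Delta v + k^2 v = 0$ with trace $u^s_j|_{\partial D_{1\&2}}$ produces
\[
  u^\infty_j = L_{1\&2}^\ast (N^\inn_{D_{1\&2},0} - N^\out_{D_{1\&2}}) u^s_j|_{\partial D_{1\&2}}.
\]
On the boundary, the three identities $N^\inn_{D_{1\&2},q_j}(u_j|_{\partial D_{1\&2}}) = \partial u_j/\partial\nu$, $N^\inn_{D_{1\&2},0}(u^i|_{\partial D_{1\&2}}) = \partial u^i/\partial\nu$ and $N^\out_{D_{1\&2}}(u^s_j|_{\partial D_{1\&2}}) = \partial u^s_j/\partial\nu$ combine, when $u^i$ is the Herglotz wave with density $g$, to give
\[
  (N^\inn_{D_{1\&2},q_j} - N^\out_{D_{1\&2}}) u^s_j|_{\partial D_{1\&2}} = (N^\inn_{D_{1\&2},0} - N^\inn_{D_{1\&2},q_j}) L_{1\&2} g.
\]
Inverting $N^\inn_{D_{1\&2},q_j} - N^\out_{D_{1\&2}}$ and concatenating the two displays then delivers~\eqref{FFF3}.

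The only delicate point is the bounded invertibility of $N^\inn_{D_{1\&2},q_j} - N^\out_{D_{1\&2}}$ between $H^{-1/2}(\partial D_{1\&2})$ and $H^{1/2}(\partial D_{1\&2})$. This is obtained by the same transmission-problem argument used for $D_j$ in Section~\ref{se:facDtN}: unique solvability of~\eqref{eq:transmissionProblem} with $D_j$ replaced by $D_{1\&2}$ and $q_j$ by $\mathbbm{1}_{D_{1\&2}} q_j = q_j$, combined with the standing assumption that $k^2$ is not an interior Dirichlet eigenvalue of the negative Laplacian on $D_{1\&2}$, furnishes the inverse map. No additional hypothesis beyond those already collected in Section~\ref{se:facDtN} is needed, and the argument closes.
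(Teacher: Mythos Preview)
Your proposal is correct and follows essentially the same approach as the paper: the paper simply notes that, since $\supp q_j \subset \overline{D_{1\&2}}$, the identities $N^\inn_{D_{1\&2},q_j}(u_j|_{\partial D_{1\&2}}) = \partial u_j/\partial\nu$ and $N^\inn_{D_{1\&2},0}(u^i|_{\partial D_{1\&2}}) = \partial u^i/\partial\nu$ hold on $\partial D_{1\&2}$, and then asserts that the proof of Theorem~\ref{t21} carries over verbatim with $D_j$ replaced by $D_{1\&2}$. Your write-up makes the replacement explicit step by step and correctly flags the invertibility of $N^\inn_{D_{1\&2},q_j} - N^\out_{D_{1\&2}}$ via the transmission problem, exactly as in Section~\ref{se:facDtN}.
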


The following property of the outer operators $L_{1\&2}$ and $L_{1\&2}^\ast$ is well-known, see~\cite{Laksh2013, Kirsc2008}, and holds of course also for $D_{1,2}$ instead of $D_{1\&2}$.

\begin{lemma}\label{llstar}
%If $D_j$ equals $D_{1,2}$ or $D_{1\&2}$ and $L_j$ equals $L_{1,2}$ or $L_{1\&2}$ there holds:
If $-k^2$ is not an eigenvalue of the negative Dirichlet-Laplacian in $D_{1\&2}$, then both operators $L_{1\&2}: \, L^2(\S^{d-1}) \to H^{1/2}(\partial D_{1\&2})$ and $L_{1\&2}^\ast: \, H^{-1/2}(\partial D_{1\&2}) \to L^2(\S^{d-1})$ are injective and their ranges are dense.
\end{lemma}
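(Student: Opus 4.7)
The plan is to prove the two injectivity statements directly and then obtain the two density statements from them by duality. Note that the two operators are mutual $L^2$--adjoints (with respect to the $L^2(\partial D_{1\&2})$ duality pairing between $H^{1/2}$ and $H^{-1/2}$ and the $L^2$ inner product on $\S^{d-1}$), so for bounded operators between Hilbert spaces the standard identity $\overline{\mathrm{range}(T)} = \ker(T^\ast)^\perp$ immediately gives: $L_{1\&2}$ injective implies range of $L_{1\&2}^\ast$ dense, and $L_{1\&2}^\ast$ injective implies range of $L_{1\&2}$ dense. Hence it suffices to verify the two injectivities separately.

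For injectivity of $L_{1\&2}$, I would suppose $L_{1\&2} g = 0$ and consider the Herglotz wave function $v_g(x) = \int_{\S^{d-1}} e^{\i k\, x \cdot \hat x} g(\hat x) \dS{(\hat x)}$, which is an entire solution to $\Delta v_g + k^2 v_g = 0$ in $\R^d$. The assumption $L_{1\&2} g = 0$ means $v_g$ vanishes on $\partial D_{1\&2}$, so it solves the interior Dirichlet problem on $D_{1\&2}$ with zero boundary data. Since $k^2$ is not a Dirichlet eigenvalue of the negative Laplacian on $D_{1\&2}$, this forces $v_g = 0$ inside $D_{1\&2}$. Real-analyticity of solutions to the Helmholtz equation on the connected set $\R^d$ then gives $v_g \equiv 0$ in $\R^d$, whence the well-known injectivity of the Herglotz operator on $L^2(\S^{d-1})$ (e.g.\ via expansion into spherical harmonics, so that the coefficients of $g$ are determined by the coefficients of $v_g$) implies $g = 0$.

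For injectivity of $L_{1\&2}^\ast$, I would take $\phi \in H^{-1/2}(\partial D_{1\&2})$ with $L_{1\&2}^\ast \phi = 0$ and introduce the single-layer potential $w(x) = \int_{\partial D_{1\&2}} \Phi(x-y)\, \phi(y) \dS{(y)}$, where $\Phi$ is the radiating fundamental solution of the Helmholtz equation. Since the far field of $\Phi(\cdot - y)$ is $\hat x \mapsto e^{-\i k\,\hat x\cdot y}$, the far field of $w$ equals $L_{1\&2}^\ast \phi$ up to the dimensional constant $\gamma_d$, hence vanishes. By Rellich's lemma together with unique continuation, $w = 0$ in the unbounded exterior component of $\R^d \setminus \partial D_{1\&2}$. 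Continuity of the single-layer potential across $\partial D_{1\&2}$ gives that $w$, viewed inside $D_{1\&2}$, solves the homogeneous Helmholtz equation with zero Dirichlet data; the eigenvalue assumption forces $w = 0$ inside as well. The standard jump relation for the normal derivative of the single layer then yields $\phi = [\partial w/\partial\nu]_{\partial D_{1\&2}} = 0$.

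The main technical subtlety is that $D_{1\&2}$ is only assumed Lipschitz, so I would invoke the mapping and jump properties of single- and double-layer potentials in the Lipschitz setting (e.g.\ as developed in McLean's monograph) to justify the continuity of $w$ across $\partial D_{1\&2}$ and the jump relation for $\partial w/\partial\nu$ at the $H^{\pm 1/2}$ level. Once those are in place, the argument above is essentially formal; the eigenvalue hypothesis is used twice, once for each injectivity, and is exactly what is needed to prevent the appearance of a nonzero Dirichlet solution as an obstruction.
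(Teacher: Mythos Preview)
Your argument is correct and is precisely the standard proof that the paper alludes to by citing \cite{Laksh2013, Kirsc2008} without spelling out the details. The two injectivity proofs (Herglotz function plus the Dirichlet eigenvalue assumption for $L_{1\&2}$, single-layer potential plus Rellich and the jump relations for $L_{1\&2}^\ast$) together with the duality $\overline{\mathrm{Rg}(T)} = \ker(T^\ast)^\perp$ are exactly what those references contain, and your remark on the Lipschitz potential theory from \cite{McLea2000} is the appropriate technical backing.
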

%\begin{proof}
%Let $L^\astu=0$ on $\S^{d-1}$. Consider the single layer operator
%\[
%Tu=\int_{\partial D}E(x-y)u(y)dS_y,
% \]
% where $E$ is the fundamental solution of the Helmholtz equation that satisfies the radiation conditions, and the density $u$ belongs to the kernel of operator  $L^\ast$.
% From Rellich's Lemma it follows that $ Tu=0$ on  $R^d\setminus D$. The latter implies that $ Tu=0$ in $D$, since $-k^2$ is not an eigenvalue of the Dirichlet Laplacian.
%%The statement about the kernel of $L^\ast$ is proved in  \cite[Lemma 5.2]{EP}. One can find the statement about the kernel of $L$ in the same paper, but it is difficult to refer to a specific place.
%Since $u$ is proportional to the jump of the normal derivative of $Tu$, it follows that $u=0$. The injectivity of $L^\ast$ is proved.
%
%Let us show that the kernel of $L$ is trivial. Obviously, function $L\varphi$ satisfies the equation $\Delta v+k^2v=0,~x\in \mathbb R^d$. One can show that for each fixed $k_0>0$ and an arbitrary non-zero $\varphi$, function $L\varphi$ can not vanish identically in $\mathbb R^d$, and therefore it can not be equal to zero identically in $D$. Hence, if $L\varphi=0$ on $\partial D$, then $v=L\varphi$ is an eigenfunction of the Dirichlet problem.
%\end{proof}

The last lemma shows that $F_j$ can be written as $F_j = L_{1\&2}^\ast \, M_j \, L_{1\&2}$ with
\begin{equation}\label{eq:Mj}
  M_j = ( N^\inn_{D,0} - N^\out_{D} ) (N^\inn_{D,q_j} - N^\out_{D} )^{-1} (N^\inn_{D,0} - N^\inn_{D,q_j})
\end{equation}
for $j=1,2$ by~(\ref{FFF3}).
Thus, $\mathcal S_2^\ast(F_1-F_2)$ is representable in the form
\begin{align}
  \mathcal{S}_2^\ast(F_1-F_2)
  & = \left(I - 2\i k |\gamma_d|^2 \, F_2^\ast\right) (F_1-F_2) \nonumber \\
  & = \left(I - 2\i k |\gamma_d|^2 \, L_{1\&2}^\ast M_2^\ast L_{1\&2}\right) \left(L_{1\&2}^\ast [M_1-M_2] L_{1\&2} \right) \label{eq:factoCompl} \\
  & = L_{1\&2}^\ast \big(\underbrace{M_1 - M_2 - 2\i k |\gamma_d|^2 M_2^\ast L_{1\&2} L_{1\&2}^\ast [M_1-M_2]}_{=: M_{1\&2}} \big) L_{1\&2}, \nonumber
\end{align}
with a bounded operator $M_{1\&2}$ mapping $H^{1/2}(\partial D_{1\&2})$ into $H^{-1/2}(\partial D_{1\&2})$.
The latter middle operator can be analyzed by pseudo-differential calculus.
To this end, we suppose from now on that the two contrasts $q_{1,2}$ are infinitely often differentiable functions inside their joint support $\overline{D}:=\supp q_{1,2} \subset \R^d$, and that all partial derivatives possess continuous extensions to $\overline{D}$.
The domain $D$ is moreover assumed to be smooth and bounded with connected complement.
(These assumptions avoid technicalities and imply in particular that $D_{1\&2} = D$.
It would be sufficient to assume that $q_{1,2}$ are both $C^3(\overline{D})$ and that $D$ is a domain of class $C^4$, see~\cite{Laksh2013b}.)
Writing $L = L_{1,2}$, the factorization in~\eqref{eq:factoCompl} hence simplifies to
\begin{align} \label{eq:factoSimpl}
  \mathcal{S}_2^\ast(F_1-F_2)
  & = L^\ast M_{1\&2} L
  = L^\ast \big( M_1 - M_2 - 2\i k |\gamma_d|^2 M_2^\ast L_{1\&2} L^\ast [M_1-M_2] \big) L.
\end{align}

Let $(y_1,...y_{d-1})^\top$ be local coordinates on $\partial D$ with dual variables $(\xi^\ast_1, ...,\xi^\ast_{d-1})$ and let $\sum_{i,j=1}^{d-1} g_{i,j}(y) \d{y_i} \d{y_j}$ be the first fundamental form on $\partial D$. Then $|\xi^\ast|= \big(\sum_{i,j=1}^{d-1} g^{i,j}(y) \xi^\ast_i\xi^\ast_j \big)^{1/2}$ is the length of the covector in the cotangent bundle $T^\ast(\partial D)$.

\begin{lemma}\label{mprl}
Suppose that $k^2$ is such that the DtN operators $N^\inn_{D,q_j}$, $j=1,2$, and $N^\inn_{D,0}$ are well-defined.

(a) Both operators $N^\inn_{D,q_j}$ and $N^\inn_{D,0}$ are elliptic pseudo-differential operators of order one and self-adjoint from $H^{1/2}(\partial D)$ into $H^{-1/2}(\partial D)$. The principal symbols of both operators equal $|\xi^\ast|$.

(b) The operator $N^\out_{D}$ is an elliptic pseudo-differential operator of order one with principal symbol $-|\xi^\ast|$. For every $\psi\neq 0$ in $H^{1/2}(\partial D)$,
\begin{equation}\label{kvf}
\Im(N^\out \psi,\psi)_{L^2(\partial D)} = k \, |\gamma_d|^2 \, \int_{\S^{d-1}} |v^\infty|^2 \dS{} >0,
\end{equation}
where $v^\infty$ is the far-field amplitude of the solution $v$ of the exterior Dirichlet scattering problem in $\R^d \setminus \overline{D}$ with Dirichlet boundary data $\psi \in H^{1/2}(\partial D)$.

(c) If $q_j$ does not vanish on the boundary $\partial D$, then the operator $N^\inn_{D,0} -N^\inn_{D,q_j}$ from in (\ref{NN}) is an elliptic pseudo-differential operator of order minus one with  principal symbol $(x,\xi^\ast) \mapsto k^2 q_j(x)/ (2|\xi^\ast|)$ for $(x,\xi^\ast) \in \partial D \times T^\ast(\partial D)$.

(d) If $q_j$ is identically zero on the boundary $\partial D$ and its normal derivative does not vanish anywhere on the boundary, then the operator $N^\inn_{D,0} -N^\inn_{D,q_j}$ from in (\ref{NN}) is an elliptic pseudo-differential operator of order minus two with principal symbol $(x,\xi^\ast) \mapsto -k^2 (\partial q_j(x)/\partial \nu)/ (4|\xi^\ast|)$ for $(x,\xi^\ast) \in \partial D \times T^\ast(\partial D)$.
More generally, if we suppose that exists $m\in \N_0$ such that
\begin{equation}\label{1301A}
\frac{\partial^i q_j(x)}{\partial \nu^i} \equiv  0, ~  i=0, \ldots, m-1, ~ \frac{\partial^m q_j(x)}{\partial \nu^m} \neq 0  \quad \text{for } x \in \partial D,
\end{equation}
then there is a constant $\rm{const_m}>0$ such that $N^\inn_{D,0} -N^\inn_{D,q_j}$ has principal symbol $(x,\xi^\ast) \mapsto (-1)^m k^2 \, \mathrm{const}_m \, (\partial^m q_j(x) / \partial \nu^m)/ |\xi^\ast|^{m+1}$ for $(x,\xi^\ast) \in \partial D \times T^\ast(\partial D)$.
\end{lemma}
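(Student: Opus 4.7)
The plan is to realize all the DtN operators as classical pseudo-differential operators on the smooth compact manifold $\partial D$ and to extract their symbol expansions via the parametrix construction for the Helmholtz equation in boundary normal coordinates. Write $(y,t)$ for coordinates with $y$ parametrizing $\partial D$ and $t$ the signed distance into $D$, so that $\nu = -\partial_t$ at $t=0$. In these coordinates the operator $\Delta + k^2(1+q_j)$ takes the form $\partial_t^2 + R(y,t;\partial_y) + k^2(1+q_j(y,t))$, where $R$ is a tangential operator, elliptic of order two with principal symbol $-|\xi^\ast|^2$ and carrying a first-order $\partial_t$-term reflecting the mean curvature. Factor this expression, modulo smoothing, as $(\partial_t - B^-_{q_j})(\partial_t + B^+_{q_j})$, with $B^\pm_{q_j}$ classical tangential pseudo-differential operators of order one depending smoothly on $t$; the operator $B^+_{q_j}$ encodes the modes decaying into $D$ and is the one relevant for the interior DtN.

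Parts (a) and (b) are then immediate consequences of this construction. Matching top-order terms in the factorization yields principal symbol $|\xi^\ast|$ for $N^\inn_{D,q_j} = B^+_{q_j}|_{t=0}$ (and similarly for $N^\inn_{D,0}$), as well as $-|\xi^\ast|$ for $N^\out_D$, where the sign flip reflects the fact that the outward normal to $D$ points into the exterior region in which the exterior problem is posed. Self-adjointness of the interior DtN operators from $H^{1/2}(\partial D)$ to $H^{-1/2}(\partial D)$ follows from Green's identity applied to any two $H^2(D)$ solutions of the real-coefficient Helmholtz equation in $D$, combined with the assumed unique solvability of the interior Dirichlet problem. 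Identity~\eqref{kvf} reproduces the computation from step~(4) of the proof of Lemma~\ref{th:diffSign}: applying Green's second identity to $v$ and $\overline{v}$ in $B_R \setminus \overline{D}$ and passing to the limit $R\to\infty$ via the Sommerfeld condition~\eqref{eq:SRC} yields the stated far-field flux expression, which is strictly positive by Rellich's lemma for nontrivial boundary data.

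For part (c), denote the homogeneous symbol components of $B^+_{q_j}$ by $b^+_{q_j,l}(y,t;\xi^\ast)$ for $l\le 1$. The factorization identity, expanded in homogeneous components and matched order by order, produces a recursive transport system along $t$ in which the inhomogeneity $k^2 q_j(y,t)$ first enters at the order $-1$ equation. The same recursion for $q_j\equiv 0$ differs only in this inhomogeneity, so at the level of the difference all geometric and background contributions cancel, and a direct computation yields $(b^+_{0,-1} - b^+_{q_j,-1})|_{t=0} = k^2 q_j(y,0)/(2|\xi^\ast|)$, which is the principal symbol of $N^\inn_{D,0} - N^\inn_{D,q_j}$.

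The main obstacle is part (d). If $\partial_\nu^i q_j(y,0) = 0$ for $i<m$ while $\partial_\nu^m q_j(y,0) \ne 0$, then the inhomogeneity $k^2 q_j(y,t)$ in the transport system vanishes to order $t^m$, and only its Taylor coefficient of order $m$ in $t$ can survive after iterating the recursion. Because each step of the recursion consists of an algebraic tangential equation solved by dividing by the principal symbol $b^+_{q_j,1}+b^-_{q_j,1} = 2|\xi^\ast|$ followed by differentiation in $t$, an $m$-fold iteration produces an overall factor $|\xi^\ast|^{-m-1}$, a positive combinatorial constant $\mathrm{const}_m$ arising from repeated division and differentiation, and a sign $(-1)^m$ coming from the conversion between $\partial_t$-derivatives and outward normal derivatives. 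The cleanest route is to carry out $m=1,2$ explicitly and then to induct on $m$, keeping track only of the $q_j$-driven part of the recursion since every background contribution cancels in the difference; the explicit value of $\mathrm{const}_m$ is not needed for the ellipticity and symbol-order claim, but its positivity is what the induction tracks.
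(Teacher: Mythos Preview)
Your proposal is correct and follows essentially the same route as the paper: the paper's own proof is largely a pointer to~\cite{Laksh2013b}, where the DtN operators are analyzed via the parametrix/factorization construction in boundary normal coordinates that you describe, and the identity~\eqref{kvf} is derived exactly as you indicate (Green's identity plus Rellich, compare~\eqref{eq:imagT}). The one notable stylistic difference concerns the constant $\mathrm{const}_m$ in part~(d): the paper states that its computation reduces to evaluating two determinants of $m\times m$ band matrices of band width two (arising from the linear recursion for the symbol coefficients), whereas you propose to reach the same conclusion by induction on $m$, tracking only the $q_j$-driven terms through the transport recursion. Both lead to the same principal symbol; the determinant formulation is more explicit about the numerical value, while your induction is cleaner for establishing positivity and the sign $(-1)^m$ without heavy notation.
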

 \begin{proof} The first statement and the expression for the symbols of $N^\inn_{D,q_j}$, $N^\inn_{D,0}$, and $N^\out_{D}$ are well known, see more details in \cite{Laksh2013b}.
The formula on the left of (\ref{kvf}) is a consequence of Green's first identity and the definition of the far field, compare~\eqref{eq:imagT}; positivity of the left-hand side is a consequence of Rellich's lemma. Two last statements can be found in \cite[lemma 1.1]{Laksh2013b}. It is justified by calculating the first three terms of the full symbols of $N^\inn_{D,0}$ and $N^\inn_{D,q_j}$ (the differences of the first two terms of the symbols vanishes).
The proof of item (d) consists in computing the full symbol of the pseudo-differential operators $N^\inn_{D,0}$ and $N^\inn_{D,q_j}$.
This procedure is described in detail in Sections 3 and 4 of \cite{Laksh2013b} and has been justified in \cite{vainberg1967uniformly}, see also~\cite[Ch.VII]{eskin2011lectures} and~\cite{lee1989determining}.
Note that the coefficient $\rm{const_m}$ of the principal symbol is calculated rigorously in \cite{Laksh2013b} for $m=0$ and $m=1$ only.
For general $m>0$, calculating $\rm{const_m}$ reduces to calculating two determinants of a band matrix of size $m\times m$ and band width two; we omit this calculation since %it is quite simple from one side, and precise a lot of designations
it requires a significant amount of notation that is not going to be used again.
% Changing sign $(-1)^m$ reflects the fact that $\nu$ is an exterior to the domain where the PDE is considered. 
\end{proof}

The factorization of $M_j = ( N^\inn_{D,0} - N^\out_{D} ) (N^\inn_{D,q_j} - N^\out_{D} )^{-1} (N^\inn_{D,0} - N^\inn_{D,q_j})$ from Lemma~\ref{t22} into pseudo-differential operators with principal symbols introduced in the last lemma allows to compute the principal symbol of $M_{1\&2} = M_1 - M_2 - 2\i k |\gamma_d|^2 M_2^\ast L  L^\ast [M_1-M_2]$ from~\eqref{eq:factoCompl}.
Note that $L \, L_{1\&2}^\ast$ is compact from $H^s(\partial D)$ into $H^t(\partial D)$ for arbitrary $s,t \in\R$, such that $M_2^\ast L L^\ast [M_1-M_2]$ is bounded from $H^{1/2}(\partial D)$ into $H^t(\partial D)$ for all $t \in \R$.
In particular, this operator is irrelevant for computing the principal symbol of $M_{1\&2}$.
As the principal symbols of $N^\inn_{D,q_j}$ and $N^\inn_{D,0}$ equal $(x,\xi^\ast) \mapsto |\xi^\ast|$, as that of $N^\out_{D}$ equals $(x,\xi^\ast) \mapsto-|\xi^\ast|$, and as that of $N^\inn_{D,0} -N^\inn_{D,q_j}$ equals $(x,\xi^\ast) \mapsto k^2 q_j(x)/ (2|\xi^\ast|)$, the principal symbol of $M_{1\&2}$ equals
\begin{equation}
  \label{eq:priSym}
  (x,\xi^\ast) \mapsto
  \frac{2 |\xi^\ast|}{2|\xi^\ast|} \, \, k^2 \frac{q_1(x)-q_2(x)}{2|\xi^\ast|}
  = k^2 \ \frac{q_1(x)-q_2(x)}{2|\xi^\ast|}
  \quad \text{for } (x,\xi^\ast) \in \partial D \times T^\ast(\partial D).
\end{equation}

\begin{theorem}\label{lemma:E1}
(a)
If $q_1-q_2 < 0$ on $\partial D$, then $\mathcal S_2^\ast(F_1-F_2)$ has at most a finite number of eigenvalues $\lambda_j$ with positive real part.

(b)
If $q_1 - q_2 >0$ on $\partial D$, then $\mathcal S_2^\ast(F_1-F_2)$ has at most a finite number of eigenvalues $\lambda_j$ with negative real part.

(c)
If $q_1- q_2$ takes both positive and negative values on $\partial D$, then $\mathcal S_2^\ast(F_1-F_2)$ has infinitely many eigenvalues with both  positive and negative part.

(d)
In case that $q_1\equiv q_2$ at the boundary but (\ref{1301A}) holds for some $m>0$ then corresponding result (a), (b) or (c) holds depending on sign of the $m$th normal derivative.
\end{theorem}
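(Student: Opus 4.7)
The plan is to exploit the factorization $\mathcal S_2^\ast(F_1-F_2) = L^\ast M_{1\&2} L$ from~\eqref{eq:factoSimpl} together with the principal symbol of $M_{1\&2}$ computed in~\eqref{eq:priSym}. Since $\mathcal S_2^\ast(F_1-F_2)$ is compact and normal by Lemma~\ref{eq:diffNormal}, its self-adjoint real part is compact with eigenvalues exactly $\{\Re \lambda_j\}_{j\in\N}$, so I can reduce all four statements to analyzing the sign of the real quadratic form $g \mapsto \Re (M_{1\&2} L g, L g)_{L^2(\partial D)}$ on $L^2(\S^{d-1})$ through the min-max principle for compact self-adjoint operators.

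For part (a), I would assume $q_1 - q_2 \le -c_0 < 0$ on the compact manifold $\partial D$. By~\eqref{eq:priSym}, the self-adjoint operator $\Re M_{1\&2}$ is then an elliptic pseudo-differential operator of order $-1$ whose principal symbol is bounded above by $-c_0 k^2/(2|\xi^\ast|)$, so the sharp G\r{a}rding inequality furnishes constants $c_1, C_1 > 0$ with
\begin{equation*}
\Re (M_{1\&2} \phi, \phi)_{L^2(\partial D)} \;\le\; -c_1 \|\phi\|_{H^{-1/2}(\partial D)}^2 + C_1 \|\phi\|_{H^{-1}(\partial D)}^2
\end{equation*}
for all $\phi \in H^{1/2}(\partial D)$. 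Specializing to $\phi = L g$ and using that $L: L^2(\S^{d-1}) \to H^{-1/2}(\partial D)$ is bounded while the embedding $H^{-1/2}(\partial D) \hookrightarrow H^{-1}(\partial D)$ is compact, a standard finite-codimension reduction produces a closed subspace $V \subset L^2(\S^{d-1})$ of finite codimension on which $C_1 \|L g\|_{H^{-1}}^2 \le \tfrac{c_1}{2} \|L g\|_{H^{-1/2}}^2$; consequently $\Re (\mathcal S_2^\ast(F_1-F_2) g, g)_{L^2(\S^{d-1})} \le 0$ on $V$, and min-max forces at most $\mathrm{codim}(V) < \infty$ eigenvalues of positive real part. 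Part (b) is the symmetric argument with the reversed sign.

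For part (c), I would argue by contradiction: if only finitely many $\lambda_j$ had positive real part, some finite-codimensional subspace $V$ would satisfy $\Re (M_{1\&2} L g, L g) \le 0$ for all $g \in V$. Fixing $x_0 \in \partial D$ with $q_1(x_0) > q_2(x_0)$ and a unit cotangent vector $\xi_0 \in T^\ast_{x_0}(\partial D)$, I would test against coherent states $\phi_\tau(y) = \chi(y-x_0) e^{\i \tau \xi_0 \cdot y}$ with a fixed smooth cut-off $\chi$ and $\tau \to \infty$. Standard symbolic calculus together with~\eqref{eq:priSym} yields $\Re (M_{1\&2} \phi_\tau, \phi_\tau)_{L^2} = k^2 (q_1(x_0)-q_2(x_0))/(2 \tau |\xi_0|) \|\phi_\tau\|_{L^2}^2 + o(\tau^{-1}\|\phi_\tau\|_{L^2}^2) > 0$ for $\tau$ large. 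Choosing sufficiently separated $\tau_j \to \infty$ produces linearly independent $\phi_j$, each approximated in $H^{1/2}(\partial D)$ by some $L g_j$ (density from Lemma~\ref{llstar}); subtracting a finite-rank correction to enforce $g_j \in V$ preserves positivity at high $\tau_j$, yielding a contradiction. The dual argument at any $x_1 \in \partial D$ with $q_1(x_1) < q_2(x_1)$ supplies infinitely many eigenvalues with negative real part. Part (d) should follow from rerunning the same G\r{a}rding and coherent-state arguments with the alternate principal symbol from Lemma~\ref{mprl}(d): $M_{1\&2}$ becomes a PDO of order $-(m+1)$ with principal symbol proportional to $(-1)^m (\partial^m q_1/\partial\nu^m - \partial^m q_2/\partial \nu^m)(x)/|\xi^\ast|^{m+1}$, so the sign of the $m$th normal derivative of $q_1 - q_2$ on $\partial D$ replaces that of $q_1 - q_2$ itself.

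The main technical hurdles I anticipate are the symbol-level verification that the compact perturbation term $-2\i k|\gamma_d|^2 M_2^\ast L_{1\&2} L^\ast (M_1-M_2)$ present in~\eqref{eq:factoSimpl} is smoothing of arbitrarily high order (so that it does not disturb the principal symbol of $M_{1\&2}$), and the delicate construction in part (c) ensuring that the finite-rank orthogonality constraints used to force $g_j \in V$ can be imposed on the coherent-state approximations $L g_j$ without destroying the leading positive contribution to the real part of the quadratic form at high frequency.
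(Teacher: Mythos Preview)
Your argument for parts (a), (b), and (d) follows the same line as the paper's: the factorization $\mathcal S_2^\ast(F_1-F_2) = L^\ast M_{1\&2} L$ together with ellipticity of $M_{1\&2}$ and a G{\aa}rding-type inequality yield a compact perturbation of a sign-definite form, and a compact Sobolev embedding then forces finite-dimensionality of the offending eigenspace. (Your Sobolev indices $H^{-1/2}$ and $H^{-1}$ are the ones matching the order $-1$ of $M_{1\&2}$; the paper writes $H^{1/2}$ and $L^2$, but the compact-embedding conclusion is identical.)

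For part (c) you take a genuinely different route. The paper argues by \emph{localization of the operator}: assuming $T^-$ is finite-dimensional, it restricts to $\psi \in \tilde H^{1/2}(\Gamma^-)$ supported in a boundary patch where $q_1 - q_2 < 0$, replaces $q_j$ by $\chi q_j$ with a smooth cutoff $\chi \equiv 1$ near $\Gamma^-$, shows via interior elliptic regularity that $N^\inn_{D,q_j} - N^\inn_{D,\chi q_j}$ is smoothing on such $\psi$, and then reruns the G{\aa}rding estimate from part (a) verbatim on this subdomain. Since $\overline{L((T^-)^\perp)}$ has finite codimension in $H^{1/2}(\partial D)$ (dense range of $L$ and finite codimension of $(T^-)^\perp$), its intersection with $\tilde H^{1/2}(\Gamma^-)$ is infinite-dimensional, and the G{\aa}rding bound forces a contradiction exactly as in part (a). Your coherent-state construction instead works microlocally at a single point and must then be transported back through the compact operator $L$. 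The hurdle you flag---that the finite-rank correction needed to force $g_j \in V$ might swamp the $O(\tau^{-1})$ positive contribution because $\|g_\tau\|_{L^2(\S^{d-1})}$ is uncontrolled---is genuine for the explicit construction you describe. It can be sidestepped by arguing more abstractly: the positive part of the principal symbol forces $\Re(M_{1\&2}\cdot,\cdot)$ to be strictly positive on an infinite-dimensional subspace $X \subset H^{1/2}(\partial D)$; since $\overline{L(V)}$ has finite codimension in $H^{1/2}(\partial D)$, the intersection $X \cap \overline{L(V)}$ is still infinite-dimensional, and continuity of the quadratic form on $H^{1/2}(\partial D)$ then yields $g \in V$ with strictly positive form by density---no explicit coherent states required. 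The paper's localization is essentially this argument with $X = \tilde H^{1/2}(\Gamma^-)$, which has the advantage that positivity on $X$ follows immediately from the same G{\aa}rding inequality already used in (a).
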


\begin{remark}
Theorem~\ref{lemma:E1} holds irrespective of whether $k^2$ is such that the interior boundary value problems defining the DtN operators $N^\inn_{D,q_{1,2}}$ and $N^\inn_{D,0}$ from~\eqref{NN0} are uniquely solvable.
Indeed, by the continuous dependence of $F_{1,2}$ on $k$, such interior eigenvalues might flip the sign of the real part of at most finitely many eigenvalues, which does not influence finiteness or infiniteness of the corresponding sets of eigenvalues.
\end{remark}

\begin{proof}
(1) Let $q_1(x)-q_2(x)<0$ on $\partial D$.  Let $T^+=\overline{\mathrm{span}\{\varphi_j^+\}}$, where $\varphi_j^+$ are the orthonormal eigenfunctions of $\mathcal S_2^\ast(F_1-F_2)$ associated to eigenvalues $\lambda_j$ with positive real part $\Re \lambda_j \geq 0$.
To prove the first statement of the theorem, we need to show that the space $T^+$ is finite-dimensional.
To this end, we abbreviate the scalar product of $L^2(\S^{d-1})$ by $(\cdot,\cdot)$.

(2) By construction, we have that $\Re( \mathcal S_2^\ast(F_1-F_2)\varphi_j^+,\varphi_j^+) = \Re \lambda_j \geq 0$.
Orthogonality of the eigenfunctions $\varphi_j^+$ hence implies that
\begin{equation}\label{kvf1}
 \Re( \mathcal S_2^\ast(F_1-F_2)\varphi,\varphi)\geq0 \quad \text{for all } \varphi\in T^+.
\end{equation}
We next use the representation $\mathcal S_2^\ast(F_1-F_2)=L^\ast M_{1\&2} L$, where $M_{1\&2}$ is a pseudo-differential operator with the principal symbol $k^2 (q_1(x)-q_2(x)) / (2|\xi^\ast|)$ due to~\eqref{eq:priSym}.
For all $\varphi\in L^2(\S^{d-1})$, we have
\begin{equation*}
  (\mathcal S_2^\ast(F_1-F_2)\varphi,\varphi)
  = (M_{1\&2}\psi,\psi)_{L^2(D)} \quad \text{for } \psi=L\varphi \in H^{1/2}(\partial D).
\end{equation*}
Since $M_{1\&2}$ is an elliptic operator of order one with a negative principal symbol, there is $c_0>0$ such that
\begin{equation}\label{kvf2}
\Re(M_{1\&2}\psi,\psi)\leq -c_0\|\psi\|_{H^{1/2}(\partial D)}^2 + C \|\psi\|_{L^2(\partial D)}^2,
\end{equation}
and therefore
\begin{equation}\label{kvfMMM}
0 \leq \Re(\mathcal S_2^\ast(F_1-F_2)\varphi,\varphi)\leq -c_0\| L\varphi \|_{H^{1/2}(\partial D)}^2 + C \| L\varphi \|_{L^2(\partial D)}^2  \quad \text{for all } \varphi\in T^+.
\end{equation}
Thus, for all $\psi$ in the closure of $L (T^+) = \big\{ \psi = L \varphi \text{ for some } \varphi \in T^+ \big\}$ in the norm of $H^{1/2}(\partial D)$ there holds the inequality
\begin{equation}\label{eq:aux510}
  \| \psi \|_{H^{1/2}(\partial D)}^2
  \leq  \frac{C}{c_0} \| \psi \|_{L^2(\partial D)}^2, 
  \qquad 
  \psi \in \overline{L (T^+)}.
\end{equation}
On any infinite-dimensional subset of $H^{1/2}(\partial D)$, the $H^{1/2}(\partial D)$-norm cannot be estimated from above by the $L^2(\partial D)$-norm due to the open mapping theorem.
Consequently,~\eqref{eq:aux510} implies that the linear space $\overline{L(T^+)}$ is finite-dimensional.
Now, Lemma \ref{llstar} implies that the space $T^+$ is finite-dimensional, too, such that the first statement of the theorem is proved.

(3) To prove the second statement, one needs to replace $T^+$ by $T^-=\overline{{\rm span}\{\varphi_j^-\}}$, where $\varphi_j^-$ are the eigenfunctions corresponding to eigenvalues $\lambda_j$ with negative real part, and use the positivity of the principal symbol of $M_{1\&2}$.
Let us hence prove the last statement by combining the above technique with a localization argument.

(4) Assume hence that $q_1-q_2$ takes both positive and negative values on $\partial D$ and that the space $T^- = {\rm span}\{\varphi_j^-\}$,  defined as above, is finite-dimensional.
Similarly to (\ref{kvf1}), we have that $\Re( \mathcal S_2^\ast(F_1-F_2)\varphi,\varphi)\geq0$ for all $\varphi$ in the orthogonal complement $(T^-)^\bot$ of $T^-$, and therefore
\begin{equation} \label{kvf5}
  ( \mathcal S_2^\ast(F_1-F_2)\varphi,\varphi) = \Re(M_{1\&2} L \varphi, L \varphi)_{L^2(\partial D)} \geq 0 \quad \text{for all } \varphi \in (T^-)^\bot.
\end{equation}
The smoothness of $q_{1,2}$ implies that there is an $\varepsilon>0$ so small that the set $\Gamma^-= \{ x \in \partial\Omega, \, q_1(x) - q_2(x)<\varepsilon\}$ is not empty.
Let $\chi$ be an infinitely smooth function included in $C^\infty(\overline{D})$ such that $0 \leq \chi \leq 1$ and such that $\chi \equiv 1$ in a $d$-dimensional neighborhood $U$ of $\Gamma^-$ in $D$ with $U \bigcap \{ x \in \partial\Omega, \, q_1(x) - q_2(x) \geq 0 \} = \emptyset$.
It is always possible to choose $\chi$ such that both DtN operators $N^\inn_{D,\chi q_j}$, $j=1,2$, are well-defined between $H^{\pm 1/2}(\partial D)$.

For $\psi \in H^{1/2}(\partial D)$, consider now solutions $v,w \in H^1(D)$ of the boundary value problem
\[
  \Delta v + k^2 (1+q_j) v = 0 \text{ in } D,
  \quad
  \Delta w + k^2 (1+\chi q_j) w = 0 \text{ in } D,
  \quad
  v = w = \psi \text{ on } \partial D,
\]
such that $N^\inn_{D,q_j} \psi = \partial v /\partial \nu$ and $N^\inn _{D,\chi q_j}\psi = \partial w / \partial \nu$ holds in $H^{-1/2}(\partial D)$.
The difference $\phi = N^\inn_{D,q_j} \psi - N^\inn _{D,q_j\chi} \psi$ hence equals the Neumann boundary values of $z=v-w \in H^1(D)$,
\[
  \Delta z + k^2 (1+q_j) v = k^2 (\chi-1) q_j w \text{ in } D,
  \quad
  z = 0 \text{ on } \partial D.
\]
As $\chi-1$ vanishes in the neighborhood $U$ of $\Gamma^-$, standard boundary estimates for the solutions of elliptic equations show that $\| z \|_{H^\ell(U)} \leq C(\ell) \| \psi \|_{H^{1/2}(\partial D)}$ for all arbitrary $\ell\in\N$, as long as $\psi$ is supported in $\Gamma^-$.
Thus, we introduce $\tilde H^{1/2}(\Gamma^-) = \big\{ \psi \in H^{1/2}(\Gamma^-), \, \supp(\psi) \subset \overline{\Gamma^-} \big\}$ and conclude that $\psi \mapsto \big(N^\inn_{D,q_j} - N^\inn _{D,q_j\chi}\big) \psi$ is bounded from $\tilde H^{1/2}(\Gamma^-)$ into $H^t(\Gamma^-)$ for arbitrary $t$.
(We implicitly extend functions in $\tilde H^{1/2}(\Gamma^-)$ by zero to elements of $H^{1/2}(\Gamma)$.)
%
% such that follows that the operator $G=N^\inn _q-N^\inn _{q'}$ is infinitely smoothing on functions $\psi\in L_2^-$.
%
If we merely consider $\psi \in \tilde H^{1/2}(\Gamma^-)$, then estimate (\ref{kvf2}) consequently not only holds for $M_{1\&2}$ but also for $M_{1\&2}'$, defined by replacing $q_1$ and $q_2$ in $M_{1,2}$ by $\chi q_1$ and $\chi q_2$, respectively.
As in part (2) of the proof, we conclude by (\ref{kvf5}) that
\[
  \|\psi\|_{H^{1/2}(\partial \Omega)}^2 \leq \frac{C}{c_0} \|\psi\|_{L_2(\partial\Omega)}^2  \quad \text{for } \psi\in \overline{L \big((T^-)^\bot\big)} \cap \tilde H^{1/2}(\Gamma^-),
\]
where the closure of $L((T^-)^\bot)$ is taken in the norm of $H^{1/2}(\Gamma)$. 
The latter inequality implies by the same arguments as in the end of part (2) that $\overline{L((T^-)^\bot)} \cap \tilde H^{1/2}(\Gamma^-)$ is finite-dimensional, such that $(T^-)^\bot$ must be finite-dimensional. 
This contradicts our initial assumption that  $T^-$ itself is a finite-dimensional subspace. 
The proof that $T^+$ can not be finite-dimensional follows analogously.
\end{proof}

\section{Applications}\label{se:appl}
As a corollary of the factorization of $F_1$ in Theorem~\ref{t21} we establish a factorization method for sign-changing contrasts.
As always in this section, we require that the DtN operators $N^\inn_{D,0}$ and $N^\inn_{D_j,q}$ from~\eqref{NN0} are well-defined for the considered contrast function $q$.

\begin{theorem} \label{th:facto}
Assume that $q$ is a real-valued contrast function supported in the smooth domain $\overline{D} \subset \R^d$ such that $q|_{\overline{D}}$ is a smooth function on $\overline{D}$.
Assume further that $\left. q \right|_{\partial D}$ is either strictly positive or strictly negative, and denote the far field operator associated to $q$ by $F = F_q$.
Additionally, suppose that $k^2$ is not a transmission eigenvalue of $D$, i.e., that there is no non-trivial pair $(v,w) \in H^1(D)^2$ such that $v-w \in H^2_0(D)$ solving
\begin{equation} \label{eq:puffpaff}
  \Delta v + k^2 (1+q) v = 0
  \quad \text{and} \quad
  \Delta w + k^2 w = 0 \quad \text{ in } D.
\end{equation}
Then $z \in \R^d$ belongs to $D$ if and only if $\phi_z(\hat{x}) := \exp(-\i k \, \hat{x}\cdot z) \in L^2(\S^{d-1})$ belongs to $\mathrm{Rg} \big( (F^\ast F)^{1/4} \big)$.
\end{theorem}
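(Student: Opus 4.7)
The plan is to combine the Dirichlet-to-Neumann factorization $F = L^{\ast} M L$ from Theorem~\ref{t21} (specialized to $q_2 = 0$) with a standard range-identity theorem of the factorization method, and then identify $\mathrm{Rg}(L^{\ast})$ via a Rellich-type argument. First I would set $q_1 = q$ and $q_2 = 0$ in the framework of Section~\ref{se:facDtN}, so that $F_2 = 0$, $\mathcal{S}_2 = I$, and $D_{1\&2} = D$; Theorem~\ref{t21} then yields $F = L^{\ast} M L$ with $M = (N^{\inn}_{D,0} - N^{\out}_{D})(N^{\inn}_{D,q} - N^{\out}_{D})^{-1}(N^{\inn}_{D,0} - N^{\inn}_{D,q})$, and by~\eqref{eq:priSym} the principal symbol of $M$ is $(x,\xi^{\ast}) \mapsto k^2 q(x)/(2|\xi^{\ast}|)$, which is of fixed sign on $T^{\ast}(\partial D)$ by the hypothesis on $q|_{\partial D}$.

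Next I would verify the hypotheses of a range-identity theorem from the factorization method, for instance Theorem~2.15 of~\cite{Lechl2009a}. The operator $L: L^2(\S^{d-1}) \to H^{1/2}(\partial D)$ is compact, injective, and has dense range by Lemma~\ref{llstar}. Unitarity of $\mathcal{S}_q = I + 2\mathrm{i}k|\gamma_d|^2 F$ forces $\Im \lambda_j(F) \geq 0$ for every eigenvalue of $F$, and together with denseness of $\mathrm{Rg}(L)$ in $H^{1/2}(\partial D)$ this translates into $\Im \langle M\psi,\psi\rangle \geq 0$ for every $\psi \in H^{1/2}(\partial D)$. Sign-definite ellipticity of the principal symbol of $M$, via the sharp G{\aa}rding inequality exactly as used in the proof of Theorem~\ref{lemma:E1}, provides a decomposition $\Re M = M_0 + K$ with $M_0$ self-adjoint and coercive of definite sign and $K$ compact self-adjoint. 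Finally, injectivity of $M$ amounts to injectivity of $F$ and, via the standard link between the kernel of a far-field operator and the interior transmission problem, reduces to the non-transmission-eigenvalue hypothesis on $k^2$: any $\psi$ in the kernel of $M$ would produce a non-trivial pair $(v,w)$ solving~\eqref{eq:puffpaff}.

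The range-identity theorem then yields $\mathrm{Rg}(L^{\ast}) = \mathrm{Rg}(F_{\sharp}^{1/2})$ for $F_{\sharp} = |\Re F| + \Im F$. Since $F$ is normal by Lemma~\ref{eq:diffNormal} and its eigenvalues lie in the closed upper half-plane, the elementary estimates $|\lambda_j| \leq \sqrt{2}(|\Re \lambda_j| + \Im \lambda_j) \leq 2|\lambda_j|$ show that $\mathrm{Rg}(F_{\sharp}^{1/2}) = \mathrm{Rg}((F^{\ast}F)^{1/4})$. The proof then reduces to the classical identity $\{\phi_z : z \in D\} = \mathrm{Rg}(L^{\ast})$: the equation $L^{\ast} g = \phi_z$ states that the single-layer potential on $\partial D$ with density $g \in H^{-1/2}(\partial D)$ shares its far field with the fundamental solution $\Phi(\cdot,z)$, so Rellich's lemma together with unique continuation identify these two functions on the unbounded component of $\R^d \setminus (\partial D \cup \{z\})$; the case $z \notin \overline{D}$ contradicts the singularity of $\Phi(\cdot,z)$, whereas for $z \in D$ one constructs $g$ as the jump of normal derivatives of an auxiliary interior Helmholtz solution with prescribed Dirichlet boundary values $\Phi(\cdot,z)|_{\partial D}$.

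I expect the main obstacle to lie in the coercivity-modulo-compact decomposition of $\Re M$ in the correct Sobolev setting, which is precisely the pseudo-differential analysis underpinning Theorem~\ref{lemma:E1}; the fixed-sign hypothesis on $q|_{\partial D}$ together with the non-transmission-eigenvalue assumption on $k^2$ are exactly what makes this step work while simultaneously guaranteeing injectivity of the middle operator.
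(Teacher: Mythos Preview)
Your proposal is correct and follows essentially the same route as the paper: the DtN factorization $F=L^\ast M L$ from Theorem~\ref{t21}, sign-definite principal symbol of $M$ via Lemma~\ref{mprl}, injectivity of $M$ from the non-transmission-eigenvalue assumption, $\Im M\ge 0$, and then a range-identity theorem to equate $\mathrm{Rg}(L^\ast)$ with $\mathrm{Rg}\big((F^\ast F)^{1/4}\big)$. The only cosmetic differences are that the paper invokes Theorem~1.23 of~\cite{Kirsc2008} directly (bypassing your detour through $F_\sharp$), obtains $\Im M\ge 0$ from Lemma~\ref{th:diffSign} with $q_2=0$ rather than from unitarity of $\mathcal S$, and establishes injectivity of $M$ factor by factor rather than through the kernel of $F$; your phrase ``injectivity of $M$ amounts to injectivity of $F$'' is imprecise since $L$ does not surject, but your follow-up sentence (a $\psi$ in $\ker M$ yields a transmission pair) is the correct argument and matches the paper's.
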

\begin{proof}
Theorem~\ref{t21} shows that $F= L^\ast M_1 L$, where $M_1: \, H^{1/2}(\partial D) \to H^{-1/2}(\partial D)$ can be represented as sum of a coercive operator plus a compact perturbation, since its principal symbol is either positive or negative due to Lemma~\ref{mprl}(a)-(c).
Recall that $M_1 = (N^\inn_{D,0} - N^\out_D) (N^\inn_{D,q} - N^\out_D)^{-1} (N^\inn_{D,0} - N^\inn_{D,q})$.
Our assumption that $k^2$ is not a transmission eigenvalue implies that $N^\inn_{D_j,0} - N^\inn_{D_j,q_j}$ is injective, since otherwise the difference of the corresponding interior Dirichlet boundary values belong to $H^2_0(D)$ and solve the two Helmholtz equations in~\eqref{eq:puffpaff}.
It is easy to see that $N^\inn_{D,0} - N^\out_D$ is injective, too, and we have already shown in the last section that $(N^\inn_{D,q} - N^\out_D)^{-1}$ is an isomorphism.
Thus, $M_1$ is injective as composition of three injective operators.
Lemma~\ref{th:diffSign} applied to $q_2 \equiv 0$ moreover shows that $\Im M_1$ is non-negative.
Further, Lemma~\ref{llstar} shows that $L: \, L^2(\S^{d-1}) \to H^{1/2}(\partial D)$ is injective with dense range.
%  If $q_2 \equiv 0$ vanishes and $q_1$ is piecewise continuous on $D$ and strictly positive (negative) on $\partial D$, Lemma~\ref{t21} hence shows that $F_1 =  L^\ast M_1 L$ can be factorized into a injective operator $L: \, L^2(\S^{d-1}) \to H^{1/2}(\partial D)$ with dense range and an operator with positive (negative) principal symbol.
%Thus, $M_1$ can be represented as sum of a coercive operator and a compact perturbation.
As $\mathcal S = I + 2\i k |\gamma_d|^2 \, F$ is unitary, all hypotheses of Theorem 1.23 in~\cite{Kirsc2008} are satisfied such that this result implies that the ranges of $L^\ast$ and $(F^\ast F)^{1/4}$ are equal.
As $k^2$ is not an interior Dirichlet eigenvalue (since $N_{D,0}$ is assumed to be well-defined), Theorems 1.12 and 1.24 in~\cite{Kirsc2008} shows that the function $\phi_z$ belongs to the range of $L^\ast$ if and only $z \in D$, which shows the claim.
\end{proof}

\marker{The last theorem typically is exploited to define an indicator function for the support of the contrast function $q$ by noting that Picard's criterion~\cite{Kirsc2008} implies for the complete eigensystem $(\lambda_j,\varphi_j)_{j\in\N}$ of $F$ that 
\begin{equation} \label{eq:picard}
  z \mapsto \left[\sum_{j\in\Z} \frac{\left| \langle \phi_z, \, \varphi_j \rangle_{L^2(\S^{d-1})} \right|^2}{|\lambda_j|} \right]^{-1} > 0
  \qquad  
  \text{ if and only if }
  \qquad 
  z \in D,
\end{equation}
see~\cite{Kirsc2008}. 
Let us briefly illustrate the latter criterion numerically for the sign-changing contrast function $q_1$ shown in Figure~\ref{fig:X}(a) for far field data gained at wave number $k=5$ via 64 incident plane waves with uniformly distributed directions on the unit circle. 
As Figure~\ref{fig:X}(b) shows, the indicator function~\eqref{eq:picard} clearly indicates the shape of the contrast $q_1$. 
(We used Tikhonov regularization by with constant regularization parameter $10^{-8}$ for a numerical noise level above $10^{-6}$.) 
For comparison, we show in Figure~\ref{fig:X}(c) the behaviour of the same indicator function for a contrast $q_2$ with same support as $q_1$ but constant contrast equal to $0.7$.  
This comparison shows in particular that the indicator function for $q_2$ is almost flat in the interior, which, arguably, provides a better reconstruction. 
In both cases, however, the inverses of the plotted indicator functions are very small outside the support of the scatterers, which notably is the only property guaranteed by Theorem~\ref{th:facto} or~\eqref{eq:picard}.  
\begin{figure}[t!!!!!h!!!b!!]
  \centering
\begin{tabular}{ccc}
\vspace*{-3mm}\hspace*{-5.5mm}
\includegraphics[width=0.35\linewidth]{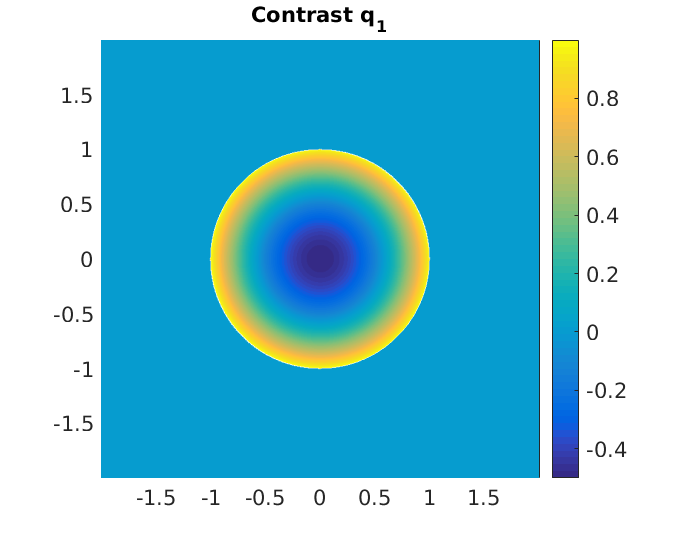}\hspace*{-6mm}
&\includegraphics[width=0.36\linewidth]{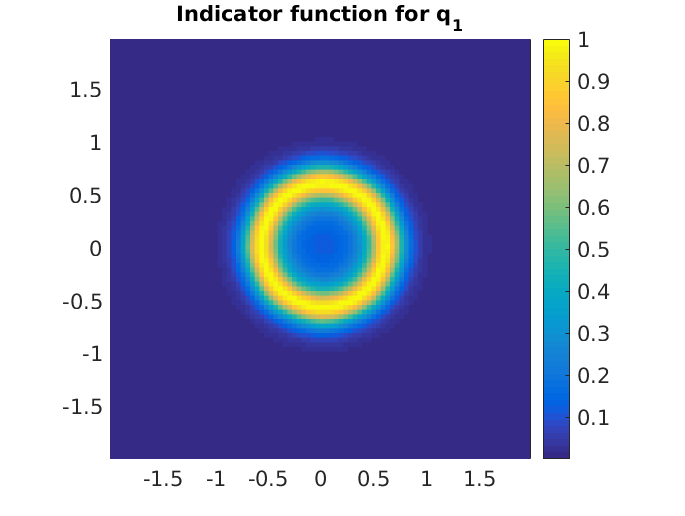}\hspace*{-6mm}
&\includegraphics[width=0.36\linewidth]{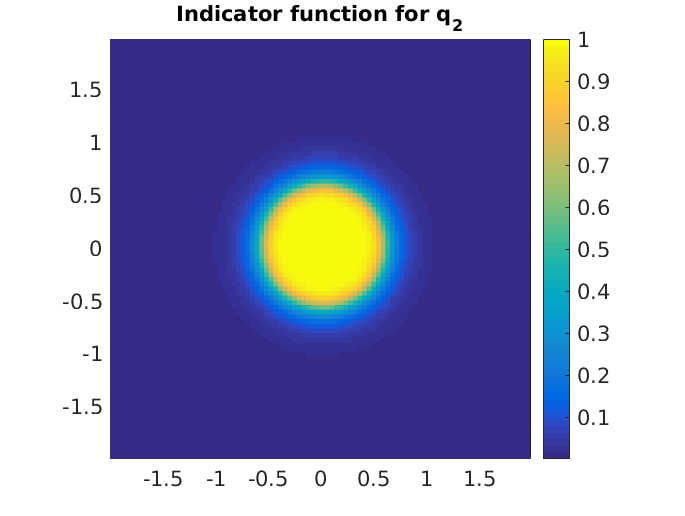}\\[1mm]
(a)&(b)&(c)\\[3mm]
\end{tabular}
\caption{\marker{(a) Contrast  $q_1$.
(b) Indicator function for $\supp(q_1)$ from the left of~\eqref{eq:picard}, scaled to maximal value one.
(c) Indicator function for $\supp(q_2)$, scaled to maximal value one. (Recall that $\supp(q_2)=\supp(q_1)$ and that $q_2|_{\supp(q_2)} = 0.7$.)}}
\label{fig:X}
\end{figure}
} 

As a further application, Theorem~\ref{lemma:E1} directly shows that the boundary values of a smooth contrast $q$ are uniquely defined by the far field operator $F_q$.

\begin{corollary}\label{th:uniqueBoundary}
If $D\subset \R^d$ is a known smooth domain and if $q: \, D \to \R$ is a smooth contrast function then $F=F_q$ uniquely determines  the boundary values $q|_{\partial D}$.
\end{corollary}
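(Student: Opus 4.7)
My plan is to use the factorization via Dirichlet-to-Neumann operators from~\eqref{eq:factoSimpl} together with the principal symbol computation~\eqref{eq:priSym}, rather than to invoke the trichotomy in Theorem~\ref{lemma:E1} case-by-case. Concretely, I argue by contrapositive: given two smooth real-valued contrasts $q_1, q_2$ supported in $\overline{D}$ with $F_{q_1}=F_{q_2}$, I aim to conclude that $q_1 \equiv q_2$ on $\partial D$.

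From $F_1 = F_2$ one obtains $\mathcal{S}_2^\ast(F_1 - F_2) = 0$, hence by the factorization~\eqref{eq:factoSimpl} the operator identity $L^\ast M_{1\& 2} L = 0$ holds on $L^2(\S^{d-1})$. The first key step is to deduce $M_{1\& 2} = 0$. For this I use Lemma~\ref{llstar}: the operator $L^\ast: H^{-1/2}(\partial D) \to L^2(\S^{d-1})$ is injective, so $L^\ast M_{1\& 2} L \varphi = 0$ implies $M_{1\& 2} L \varphi = 0$ for every $\varphi \in L^2(\S^{d-1})$, i.e., $M_{1\& 2}$ vanishes on $\mathrm{Range}(L)$. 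Since $\mathrm{Range}(L)$ is dense in $H^{1/2}(\partial D)$ and $M_{1\& 2}: H^{1/2}(\partial D) \to H^{-1/2}(\partial D)$ is continuous, this upgrades to $M_{1\& 2}\equiv 0$ on $H^{1/2}(\partial D)$.

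The second step reads off the boundary equality from the principal symbol. By~\eqref{eq:priSym}, $M_{1\& 2}$ is a pseudo-differential operator whose principal symbol equals $(x,\xi^\ast) \mapsto k^2 (q_1(x)-q_2(x))/(2|\xi^\ast|)$ on $T^\ast(\partial D)$; note that the smoothing summand $-2\i k|\gamma_d|^2 M_2^\ast L L^\ast [M_1 - M_2]$ does not contribute since $L L^\ast$ is smoothing between any Sobolev pair. A zero operator has zero principal symbol, and evaluating against any nonzero $\xi^\ast$ forces $q_1(x) - q_2(x) = 0$ for every $x \in \partial D$.

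I do not anticipate a serious obstacle; the only caveat is that Lemma~\ref{llstar} presupposes that $-k^2$ is not an interior Dirichlet eigenvalue in $D$ (and likewise for the existence of the interior DtN operators). Since such exceptional wave numbers form a discrete set and $k \mapsto F_q$ depends continuously on $k$, as noted in the remark following Theorem~\ref{lemma:E1}, uniqueness of $q|_{\partial D}$ at non-exceptional $k$ extends to all $k>0$.
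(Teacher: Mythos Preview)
Your argument is correct and takes a somewhat different route from the paper. The paper's proof is a one-line invocation of Theorem~\ref{lemma:E1}: from $\mathcal S_2^\ast(F_1-F_2)=0$ it concludes via the eigenvalue trichotomy that $(q_1-q_2)|_{\partial D}$ can be neither positive nor negative anywhere. You instead bypass the eigenvalue analysis entirely: from $L^\ast M_{1\&2}L=0$ together with the injectivity of $L^\ast$ and the density of $\mathrm{Range}(L)$ in Lemma~\ref{llstar} you deduce $M_{1\&2}=0$ as an operator on $H^{1/2}(\partial D)$, and then the principal symbol formula~\eqref{eq:priSym} forces $q_1\equiv q_2$ on $\partial D$. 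Both approaches rest on the same machinery (the DtN factorization and the symbol computation), but yours is more direct and shows that the full strength of Theorem~\ref{lemma:E1} is not needed for the uniqueness corollary; the paper's proof is shorter only because Theorem~\ref{lemma:E1} has already been established.

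One remark on your final paragraph: the continuity argument you sketch for exceptional wave numbers does not quite go through. You are given $F_{q_1}=F_{q_2}$ at a \emph{single} $k_0$; continuity of $k\mapsto F_q$ tells you nothing about whether $F_{q_1}(k')=F_{q_2}(k')$ at nearby non-exceptional $k'$, so you cannot import the conclusion from those $k'$. The paper's Remark after Theorem~\ref{lemma:E1} avoids this because it concerns eigenvalue \emph{counts} of $\mathcal S_2^\ast(F_1-F_2)$, which are stable under perturbation of $k$; your operator-vanishing argument has no analogous stability. That said, this extension is not actually needed: Section~\ref{se:appl} opens with the standing assumption that the interior DtN operators are well-defined, so Corollary~\ref{th:uniqueBoundary} is already stated under the non-exceptional hypothesis, and your main argument suffices as written.
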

\begin{proof}
If $F_1 = F_2$ for two far field operators corresponding to two smooth contrast functions $q_{1,2}$, then $\mathcal S_2^\ast (F_1-F_2) = 0$, such that Theorem~\ref{lemma:E1} implies that $(q_1-q_2)|_{\partial D}$ cannot take positive or negative values.
\end{proof}

The following result considers a contrast $q$ with support $D$ that is analytic and possibly contains obstacles with prescribed \marker{non-absorbing} boundary conditions.

\begin{theorem}\label{1401A}
Suppose that the contrast function $q$ is analytic in its support \marker{$\overline{D}$ that contains finitely many connected obstacles $\overline\Omega \subset D$ of class $C^{0,1}$ with connected complement $D \setminus \overline\Omega$. 
%Further assume that $\partial \Omega = \Gamma_{\mathrm{D}} \cup \Sigma \cup \Gamma_{\mathrm{N}}$ decomposes into a Lipschitz dissection with some non-empty $\Gamma_{\mathrm{D}}$, see~\cite{McLea2000}, and impose Dirichlet boundary conditions on $\Gamma_{\mathrm{D}}$ and Neumann boundary conditions on $\Gamma_{\mathrm {N}}$. 
Suppose moreover that the jump of $q$ across $\partial D$ is sign-definite and that the radiating scattered fields $u^s=u^s(\cdot,\theta) \in H^1_{\loc}(\R^d)$ for incident plane waves with direction $\theta \in \S^{d-1}$ solve $\Delta u^s + k^2 (1+q) u^s =- k^2 q u^i(\cdot,\theta)$ in $\R^d$, subject to transmission conditions $\left[ u^s \right]_{\partial D} = 0$, $\left[ {\partial u^s}/{\partial \nu} \right]_{\partial D} = 0 $, and either Dirichlet or Robin boundary conditions on $\partial \Omega$,  
\[
  u^s  = -u^i(\cdot,\theta) \text{ on } \partial\Omega
  \quad \text{or} \quad 
  \frac{\partial u^s}{\partial \nu} + \sigma u^s  = - \left[ \frac{\partial u^i(\cdot,\theta)}{\partial \nu} + \sigma u^i(\cdot,\theta) \right] \text{ on } \partial\Omega 
\]
for some real-valued function $\sigma \in L^\infty(\partial \Omega,\R)$. 
Additionally, suppose either that $k^2$ is not an interior Dirichlet or Robin eigenvalue of $\Omega$ for the negative Laplacian.
%latter scattering problem, i.e., that there is no non-trivial pair $(v,w) \in L^2(D\setminus\overline\Omega) \times L^2(D)$ solving $\Delta v + k^2 (1+q) v = 0$ in $D \setminus \overline\Omega$ and $\Delta w + k^2 w = 0$ in $D$ with boundary conditions $v = w$ and $\partial v / \partial \nu = \partial w / \partial \nu$ on $\partial D$ as well as $v = 0$ on $\Gamma_{\mathrm{D}}$ and $\partial v / \partial \nu = 0$ on $\Gamma_{\mathrm{N}}$. 
%(See~\cite{Laksh2013b, Cakon2015a} for analogous interior eigenvalue problems of inhomogeneous media including merely Dirichlet obstacles.)  
Then $q$ and the shape of all obstacles $\Omega$ included in $D$ are determined uniquely by the far field operator defined by the latter scattering problem.}
\end{theorem}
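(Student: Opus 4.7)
The plan is to assume that two configurations $(q_j,\Omega_j,\sigma_j)$, $j=1,2$, produce the same far field operator $F_1 = F_2$ and then to deduce $q_1 = q_2$, $\Omega_1 = \Omega_2$, and in the Robin case $\sigma_1 = \sigma_2$ in four stages. First, I would identify the outer support $D_j := \supp q_j$ by adapting the factorization $F_j = L^\ast M_j L$ of Theorem~\ref{t21} to the obstacle setting: the interior DtN operator $N^\inn_{D_j,q_j}$ is replaced by the DtN of the mixed boundary value problem for $\Delta + k^2(1+q_j)$ in $D_j\setminus\overline{\Omega_j}$ with the prescribed boundary condition on $\partial \Omega_j$. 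The principal symbol calculation of Lemma~\ref{mprl} is local at $\partial D_j$, so the obstacle strictly inside $D_j$ contributes only a smoothing perturbation that leaves the leading symbol of $M_j$ unchanged. Since the jump of $q_j$ across $\partial D_j$ is sign-definite, the range characterization of Theorem~\ref{th:facto} then still applies to $F_j$ and forces $D_1 = D_2 =: D$.

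Next, I would show that $q_1 - q_2$ vanishes to infinite order on $\partial D$. Corollary~\ref{th:uniqueBoundary} (in the same adapted form) applied to $\mathcal{S}_2^\ast(F_1 - F_2) = 0$ yields $(q_1 - q_2)|_{\partial D} = 0$. For higher-order normal derivatives I would invoke Theorem~\ref{lemma:E1}(d) inductively: if $\partial^m(q_1-q_2)/\partial\nu^m$ is the lowest non-vanishing normal derivative and is of definite sign on an open piece of $\partial D$, the proof of Theorem~\ref{lemma:E1} produces an infinite-dimensional subspace of $L^2(\S^{d-1})$ on which $\Re(\mathcal{S}_2^\ast(F_1-F_2)\varphi,\varphi)$ is sign-definite, contradicting $\mathcal{S}_2^\ast(F_1-F_2) = 0$; a sign change on $\partial D$ is likewise ruled out by part (c). Continuity hence gives $\partial^m(q_1-q_2)/\partial\nu^m \equiv 0$ on $\partial D$ for every $m \in \N_0$. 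The real-analyticity of $q_{1,2}$ on $\overline D$ upgrades this conclusion to $q_1 \equiv q_2$ on each connected component of $D$ meeting $\partial D$ by analytic continuation; the interior obstacles are irrelevant at this purely function-theoretic step. Hence $q_1 = q_2 =: q$ on $D$.

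With the background medium $1+q$ now known, the remaining task is inverse obstacle scattering with Dirichlet or Robin data in a known, non-absorbing penetrable background. Rellich's lemma together with unique continuation yields $u^s_1(\cdot,\theta) = u^s_2(\cdot,\theta)$ in the unbounded connected component of $\R^d \setminus (\overline{\Omega_1} \cup \overline{\Omega_2})$ for every $\theta \in \S^{d-1}$. A Schiffer/Isakov-type argument, leveraging density of plane waves together with the non-eigenvalue hypothesis on $k^2$ for $\Omega$, then forces $\Omega_1 = \Omega_2$; in the Robin case the function $\sigma$ can subsequently be read off from the Cauchy data of the total field on $\partial \Omega$. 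The main technical obstacle throughout is the rigorous adaptation of the DtN-based factorization of Steps~1 and~2 to the presence of internal obstacles, and this is resolved by the locality of the principal symbol analysis at $\partial D$, which is insensitive to what happens deep inside $D$.
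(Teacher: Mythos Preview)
Your approach is essentially the paper's: use Theorem~\ref{lemma:E1} (including part (d) for higher normal derivatives) to determine the germs of $q$ on $\partial D$, invoke analyticity to recover $q$ throughout $D$, and then reduce to obstacle identification in a known background medium. The paper's proof is much terser---it does not argue $D_1=D_2$ separately, compresses your inductive Step~2 into the single phrase ``uniqueness of germs of $q$ in each boundary point on $\partial D$,'' and delegates the final obstacle step to~\cite{nachman2007imaging} rather than sketching a Schiffer-type argument---so your explicit treatment of the obstacle-adapted DtN factorization via locality of the principal symbol at $\partial D$ is a genuine (and correct) refinement of what the paper actually writes down.
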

\begin{proof}
\marker{It is well-known that both the mixed scattering problem and the inhomogeneous medium scattering problem are uniquely solvable in $H^1_{\loc}(\R^d)$, and the corresponding proofs by variational methods extend to the scattering problem, see, e.g.,~\cite{Colto2013,Kirsc2013}.}  
As $D \in C^\infty$ is a smooth domain and $q|_{D}$ is restriction of an analytic function, the assumption on the jump of $q$ across $\partial D$ implies by Theorem \ref{lemma:E1} uniqueness of germs of $q$ in each boundary point on $\partial D$.
As, moreover, each germ of $q$ can be continued analytically into the whole of $\overline{D}$, the problem of identifying the shape of the obstacle is reduced to the problem of identifying the shape of obstacles in the known medium (produced by the mentioned germ of $q$), which has been solved for Dirichlet and Robin boundary conditions in~\cite{nachman2007imaging}
\end{proof}

Neglecting smoothness assumptions, the monotonicity between $(q_1-q_2)|_{\partial D}$ and the real parts of the eigenvalues of $(q_1-q_2)|_{\partial D}$ motivates the following algorithm to compute boundary values of a smooth contrast function $q$ when the smooth support $\overline{D} \subset \R^d$ of $q$ is a-priori known:
Computing far field operators for constant refractive index, determine in a first step constant upper and lower bounds for $q|_{\partial D}$.
Second, refine these bounds by decreasing/increasing the constant bounds locally on $\partial D$.
Let us for simplicity first investigate an algorithm determining constant bounds, before refining those in a second step.
% As this becomes computationally intensive since many far field operators need to be computed, we merely consider in the following the determination of constants bounding $q|_{\partial D}$ from above and below (see Listing~\ref{algo}).

\begin{lstlisting}[caption={Algorithm to find upper/lower bounds for the boundary values $q|_{\partial D}$ of real-valued contrast $q$ with $\supp(q) = \overline{D}$ from far field data $F_q$ with starting values $c_\ast<c^\ast \in \R$ and update parameter $t>0$.}, label=algo, frame=lines, mathescape=true]

$A = \mathcal{S}_{c_\ast\mathbbm{1}_D}^\ast (F_q - F_{c_\ast\mathbbm{1}_D})$;
if eigenvalues of $A$ tend to zero from the right // $\Rightarrow c_\ast < q|_{\partial D}$
    while eigenvalues of $A$ tend to zero from the right
        $c_\ast = c_\ast + t$;  // increase $c_\ast$
        $A = \mathcal{S}_{c_\ast\mathbbm{1}_D}^\ast (F_q - F_{c_\ast \mathbbm{1}_D})$;
    $c_\ast = c_\ast - t$;	
else
    while eigenvalues of $A$ $\text{do not}$ tend to zero from the right
        $b_\ast = b_\ast - t$;   // decrease $b_\ast$
        $A = \mathcal{S}_{b_\ast\mathbbm{1}_D}^\ast (F_q - F_{b_\ast \mathbbm{1}_D})$;

$A = \mathcal{S}_{c^\ast\mathbbm{1}_D}^\ast (F_q - F_{c^\ast\mathbbm{1}_D})$;
if eigenvalues of $A$ tend to zero from the left // $\Rightarrow c^\ast > q|_{\partial D}$
    while eigenvalues of $A$ tend to zero from the left
        $c^\ast = c^\ast - t$;  // decrease $c^\ast$
        $A = \mathcal{S}_{c^\ast\mathbbm{1}_D}^\ast (F_q - F_{c^\ast \mathbbm{1}_D})$;
    $c^\ast = c^\ast + t$;	
else
    while eigenvalues of $A$ $\text{do not}$ tend to zero from the left
        $c^\ast = c^\ast + t$;   // increase $c^\ast$
        $A = \mathcal{S}_{c^\ast\mathbbm{1}_D}^\ast (F_q - F_{c^\ast \mathbbm{1}_D})$;

return $c_\ast$, $c^\ast$;
\end{lstlisting}

\begin{corollary}\label{th:monoAlgo}
Under the assumptions of Corollary~\ref{th:uniqueBoundary}, the values $c_\ast$, $c^\ast$ returned by the algorithm in Listing~\ref{algo}  satisfy $c_\ast \leq  q|_{\partial D} \leq c^\ast$.
\end{corollary}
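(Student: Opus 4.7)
The plan is to reduce correctness of Listing~\ref{algo} to a direct invocation of Theorem~\ref{lemma:E1} applied to the pair $q_1 = q$ and $q_2 = c\mathbbm{1}_D$ as $c$ ranges over the constants the algorithm tests. Since $\supp q = \overline D$ is prescribed, the boundary restriction $(q_1-q_2)|_{\partial D}$ is the pointwise shift $q|_{\partial D} - c$, so the boundary sign hypotheses of Theorem~\ref{lemma:E1} translate into pointwise comparisons between the scalar $c$ and the values of $q$ on $\partial D$. By Lemma~\ref{eq:diffNormal}, the operator $A = \mathcal{S}_{c\mathbbm{1}_D}^\ast(F_q - F_{c\mathbbm{1}_D})$ is normal and compact on $L^2(\S^{d-1})$, so its spectrum forms a null sequence, and the algorithmic tests ``tend to zero from the right/left'' are well-defined queries about the asymptotic sign of $\Re \lambda_j$.

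The first step is to record the following equivalence derived from Theorem~\ref{lemma:E1}(a)-(c): the eigenvalues of $A$ tend to zero from the right (only finitely many have negative real part) if and only if $c < q(x)$ for every $x \in \partial D$; they tend to zero from the left if and only if $c > q(x)$ for every $x \in \partial D$; and if $q-c$ changes sign on $\partial D$, neither asymptotic behaviour can hold, because infinitely many eigenvalues populate each open half-plane. This three-way dichotomy is exactly what the if/else structure in Listing~\ref{algo} probes.

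The second step is to trace the two halves of the listing. In both the ``if'' and the ``else'' branch of the first block, the exit condition guarantees that the right-test passes for the final value $c_\ast$ (either because the backward step in the ``if'' branch reverts to the last constant at which the test succeeded, or because the ``else'' loop terminates precisely when the test first succeeds). The equivalence then forces $c_\ast < q(x)$ for every $x \in \partial D$, and \emph{a fortiori} $c_\ast \leq q|_{\partial D}$. The second block is perfectly symmetric via the left-test and part (a) of Theorem~\ref{lemma:E1}, yielding $c^\ast \geq q|_{\partial D}$.

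The one subtlety worth flagging is the degenerate situation in which a tested constant $c$ coincides exactly with an extremum of $q|_{\partial D}$, so that $q-c$ vanishes somewhere on $\partial D$ without changing sign; strictly speaking, Theorem~\ref{lemma:E1}(a)-(c) does not cover this borderline case. Since the algorithm advances in discrete steps of size $t>0$, such coincidences are non-generic, and the claimed inequalities $c_\ast \leq q|_{\partial D} \leq c^\ast$ are only non-strict, so the formal write-up reduces to the tracing above together with a short continuity/genericity remark covering this edge case, and no further analytic machinery beyond Theorem~\ref{lemma:E1} is required.
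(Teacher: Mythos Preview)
The paper states this corollary without proof, treating it as an immediate consequence of Theorem~\ref{lemma:E1}; your proposal supplies precisely the argument the authors leave implicit, namely applying the trichotomy of Theorem~\ref{lemma:E1} to the pair $q_1=q$, $q_2=c\mathbbm{1}_D$ and then tracing the two branches of Listing~\ref{algo}. This is correct and is exactly the intended route.

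One minor sharpening: your edge-case caveat about $q-c$ touching zero without changing sign can be dispensed with for the non-strict conclusion actually claimed. If the ``right-test'' passes at some value $c$ (only finitely many eigenvalues with negative real part), then part~(c) of Theorem~\ref{lemma:E1} (in contrapositive) already rules out that $q-c$ takes both signs on $\partial D$, and the ellipticity of $M_{1\&2}$ with a strictly negative principal symbol rules out $q-c<0$ on all of $\partial D$ (since that would force infinitely many negative real parts by the G{\aa}rding-type estimate~\eqref{kvf2}). Hence $q-c\geq 0$ on $\partial D$, i.e.\ $c\leq q|_{\partial D}$, with no genericity assumption needed. The analogous argument handles $c^\ast$.
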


To show feasibility of the latter algorithm, we consider three contrasts $q_{\mathrm{c},\mathrm{v}, \mathrm{r}}$ in $\R^2$ supported in $D=[-0.7,0.7]^2$.
First, $q_{\mathrm{c}}= 0.4 \, \1_{D}$ is piecewise constant, second
\[
  q_{\mathrm{v}}(x) = \frac{2}{5}\, \1_{D}(x)\, \left| \min\left[ \min(x_1-0.7,-x_1)-0.7, \min(x_2-0.7,-x_2-0.7) \right] \right|
\]
for $x\in \R^2$, and third
\[
  q_{\mathrm{r}}(x) = \frac{2}{5}\, \1_{D}(x)\, \min \left[ \min( x_1-0.7,-x_1-0.7), \min( x_2-0.7,-x_2-0.7) \right] + 1
\]
for $x\in \R^2$, see Figure~\ref{fig:1}.
\begin{figure}[t!!!!!h!!!b!!]
  \centering
\begin{tabular}{c c c}
  \includegraphics[width=0.3\linewidth]{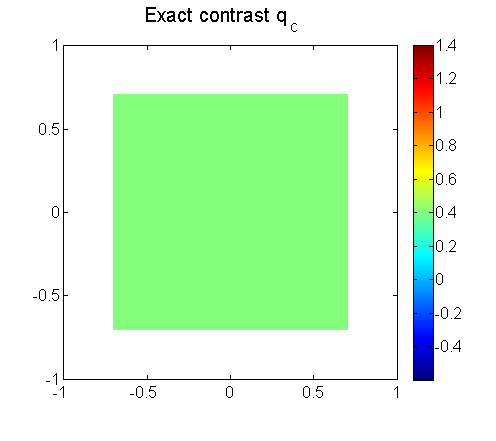}
& \includegraphics[width=0.3\linewidth]{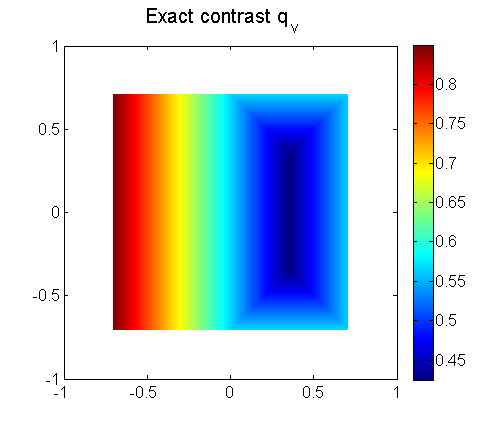}
& \includegraphics[width=0.3\linewidth]{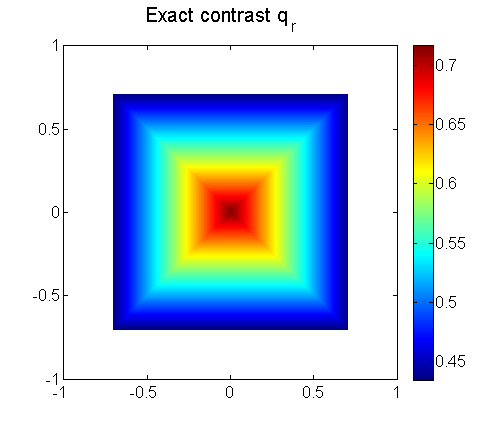}\\
(a) & (b) & (c)
\end{tabular}
\caption{(a) The contrast $q_{\mathrm{c}}$.
(b) The contrast $q_{\mathrm{v}}$.
(c) The contrast $q_{\mathrm{r}}$.}
\label{fig:1}
\end{figure}
For wave number $k=2\pi$, i.e., for wave length equal to one, the corresponding far field operators are $F_{\mathrm{c},\mathrm{v},\mathrm{r}}$.
We compare a numerical approximation of this far field operator for 32 equidistributed directions on the unit circle with numerically simulated far field operators for contrast $c \, \1_{D}$ where $c=-0.4, -0.3, \dots, 1.5$, i.e. $h=0.1$.
The simulated far field operators rely on far field data for 32 uniformly distributed incident directions computed by the spectral collocation method described in~\cite{Bur2016} (we used $2^{18}$ uniformly spaced discretization points in the domain $[-2,2]^2$).
The relative error of these synthetic far field operators is less than $10^{-4}$.
Computing one far field operator takes about 10 seconds on a Linux workstation with 4 cores and 16 GB RAM); if the support of the contrast is known in advance, one can pre-compute these auxiliary far field data.
Note that we do not add artificial noise to the simulated far field patterns, such that our numerical experiments do not allow for any statement on stability of the investigated technique.

A somewhat tricky problem for implementing the algorithm from Listing~\ref{algo} is to numerically check from a finite-dimensional approximation of $\mathcal{S}_{c \, \1_{D}}^\ast (F_{\mathrm{c}, \mathrm{v}} - F_{c\, \1_{D}})$ whether its eigenvalues tend to zero from the left (right) such that merely finitely many have a real part greater (less) than zero.
To this end, we compute first all eigenvalues in the annulus $R = \{ z\in \C: \, 10^{-8} \leq |z| \leq 10^{-2} \}$ and next the numbers $M_\pm(c)$ of eigenvalues in $R$ with real part greater (+) and less (-) than 0.
If $M_+(c)$ ($M_-(c)$) vanishes, we conclude that the eigenvalues of $\mathcal{S}_{c \, \1_{D}}^\ast (F_{\mathrm{c}, \mathrm{v}} - F_{c\, \1_{D}})$ cannot tend to zero from the right (left).
As the most expensive part of the algorithm hence is the computation of eigenvalues and eigenvectors of several matrices of size $32 \times 32$, the runtime of the presented implementation is negligible once the far field operators for the test contrasts are pre-computed.

Figure~\ref{fig:2}(a) shows plots of $M_\pm(c)$ for $c=-0.4, \dots, 1.5$ and $F=F_{\mathrm{c}}$ in (a) and $F=F_{\mathrm{v}}$ in (b).
For $q_\mathrm{c}$, $M_+(c)$ vanishes up to $c = 0.4$, whereas $M_-(c)$ vanishes for $c \geq 0.4$, such that the interior trace of the exact contrast on the boundary of the square $D$ must equal 0.4, which equals the true value.
For the spatially varying contrast $q_\mathrm{v}$, the numbers $M_+(c)$ also vanishes up to $c = 0.4$ and $M_-(c)$ vanishes for $c \geq 0.9$, such that $q_\mathrm{v}|_{\partial D}$ must take values in between $0.4$ and $0.9$.
Whilst this conclusion is true and the upper value equals the maximum of the trace $q_{\mathrm{v}}|_{\partial D}$, the lower value is about 0.15 below the minimum of that trace (and even about 0.25 below the minimum of $q_{\mathrm{v}}$ of about $0.425$.
Finally, Figure~\ref{fig:2}(c) shows that the boundary values $q_{\mathrm{r}}|_{\partial D}$ must lie in between 0.4 and 0.5, which are the best possible bounds for the chosen values of $c=-0.4, \dots, 1.5$ and the exact boundary values $q_{\mathrm{r}}|_{\partial D}=0.45$.
Note that $q_{\mathrm{r}}$ takes values in between 0.45 and 0.75, such that our theoretical results are confirmed:
Merely the boundary values of $q$ influence whether the eigenvalues of $\mathcal{S}_{c \, \1_{D}}^\ast (F_{\mathrm{c}, \mathrm{v}} - F_{c\, \1_{D}})$ tend to zero from the left or the right.
To conclude, the presented implementation indicates correct bounds for the boundary values of the contrast if the support of the exact contrast is known.

\begin{figure}[t!!!!!h!!!b!!]
  \centering
\begin{tabular}{c c c}
  \includegraphics[width=0.30\linewidth]{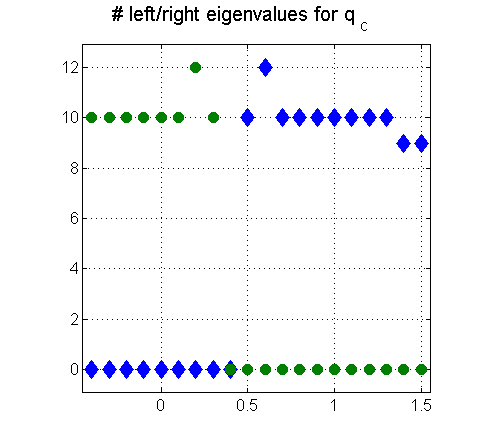}
& \includegraphics[width=0.30\linewidth]{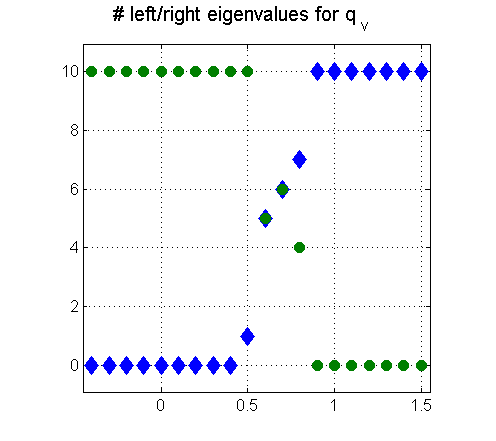}
& \includegraphics[width=0.30\linewidth]{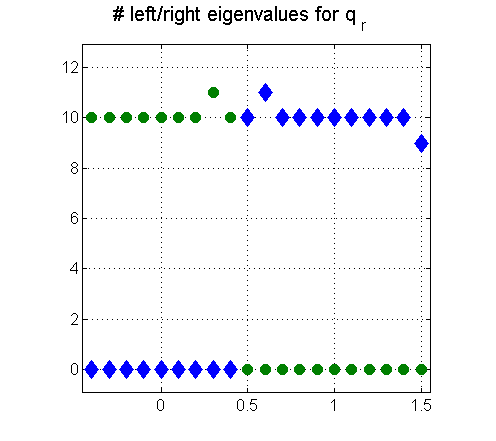}\\
(a) & (b) & (c)
\end{tabular}
\caption{Numbers of eigenvalues $M_\pm(c)$ of $\mathcal{S}_2^\ast(F-F_{c \1_{D}})$ in $\{ 10^{-8} < |z| < 10^{-2} \}$ for $c = -0.4, \dots, 1.5$ and $D=[-0.70, 0.70]^2$ with real part larger (dots, $M_+$) and smaller (diamonds $M_-$) than zero.
(a) $F=F_{q_{\mathrm{c}}}$.
(b) $F=F_{q_{\mathrm{v}}}$.
(c) $F=F_{q_{\mathrm{r}}}$.}
\label{fig:2}
\end{figure}

For more accurate space-dependent upper and lower bounds for the boundary values of a contrast function $q$, a natural idea is to replace the constant test contrasts $c\, \1_{D}$ by real-valued linear functions $p$ multiplied by the indicator function of $D$.
Initializing upper and lower approximations $q^{(\pm)}$ by constant values times $\1_{D}$ such that $q^{(-)} \leq q  \leq q^{(+)}$ in $\overline{D}$ allows to compute such bounds by checking as in Listing~\ref{algo} whether the eigenvalues of $\mathcal{S}_{p\, \1_{D}}^\ast (F_q - F_{p\, \1_{D}})$ tend to zero from the left or from the right.
(Numerically, we check as above whether the number of eigenvalues of a discretization of the latter operator of dimension $32 \times 32$ in $R_\pm = \{ z\in \C : \, 10^{-8} |z| \leq 10^{-2}, \, \Re (z) \gtrless 0 \}$ vanishes.)
If zero is limit from the left (or from the right), we conclude that $p \geq q$ (or that $p \leq q$) and update $q^{(+)}$ by $\min(p, q^{(+)})$ (and $q^{(-)}$ by $\max(p, q^{(-)})$).

As linear functions possess three degrees of freedom, the computational work of (pre-)computing far field operators to assemble discretizations of the normal operators $\mathcal{S}_{p\, \1_{D}}^\ast (F_q - F_{p\, \1_{D}})$ increases drastically compared to the algorithm from Listing~\ref{algo}.
For the examples below, we parametrized linear functions via 12 equidistributed points $x_1,\dots,x_{12}$ on the boundary of $D$ with associated directions $\hat{x}_j = x_j/|x_j|$, eleven different slopes $s_\ell = -2,-1.8, -1.6, \dots, 2$, and eleven different off-sets $o_m = 0,0.1,\dots, 1$, and approximated 1452 far field operators for contrasts $p \, \1_D$ with linear functions
\begin{equation}\label{eq:aux808}
  p(x) = s_\ell \ \hat{x}_j \cdot (x-x_j) + o_m, \qquad j=1,\dots,12, \ \ell,m=1,\dots,11.
\end{equation}
Note that again that these far field data can be pre-computed if the shape of the scattering object is known a-priori.
More generally, we could also consider polynomials of higher degree, but the amount of work to precompute far field operators increases exponentially in the degree.

\begin{figure}[h!!!!!!!!!!!!!!!!!!!!!!!!!!!!!!!!!!!!t!!!!!b!!]
  \centering
\begin{tabular}{@{\hspace*{-5mm}}c@{\hspace*{-4mm}}c@{\hspace*{-4mm}}c@{\hspace*{-4mm}}c}
\includegraphics[width=0.35\linewidth]{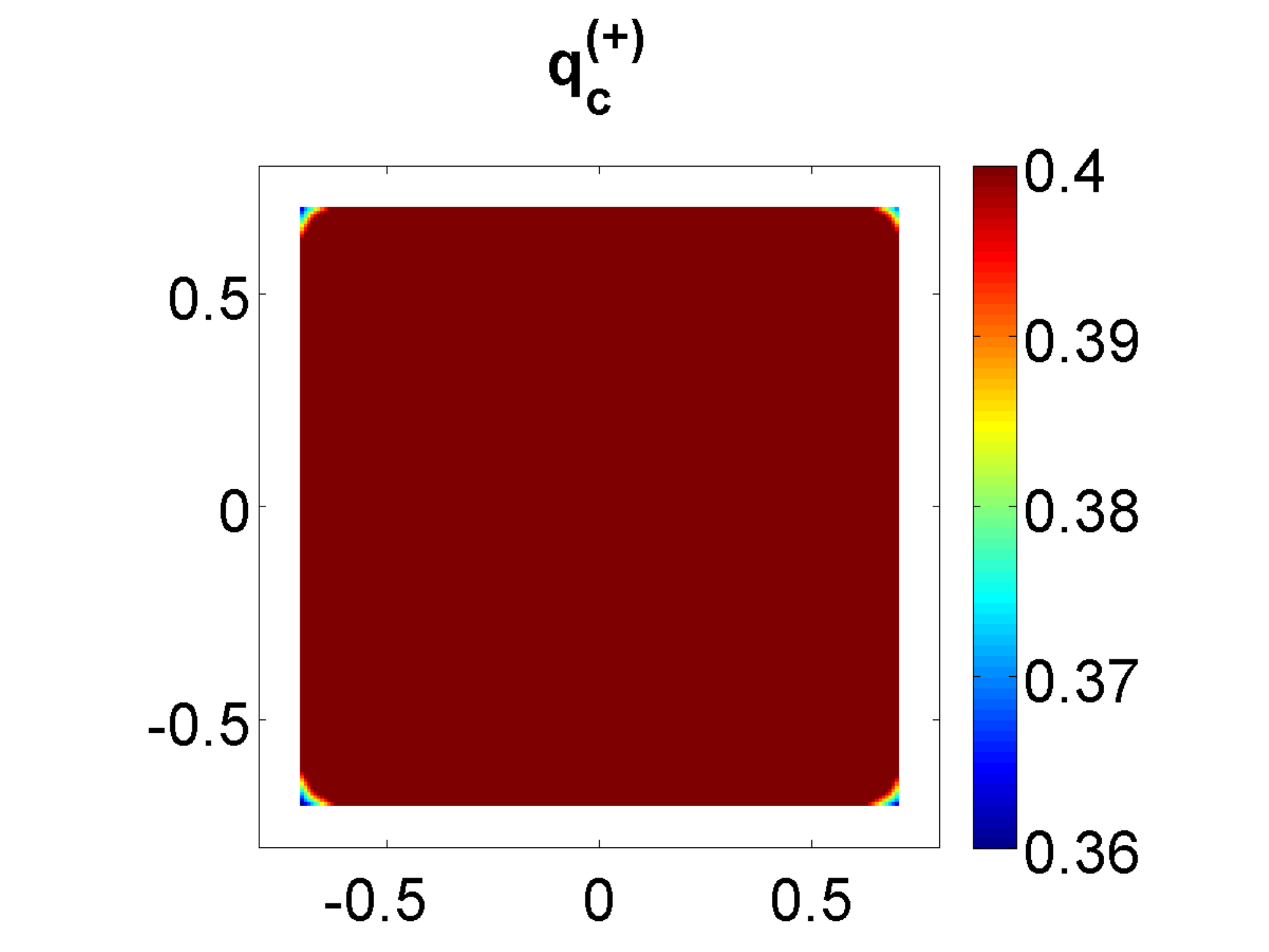}
& \includegraphics[width=0.35\linewidth]{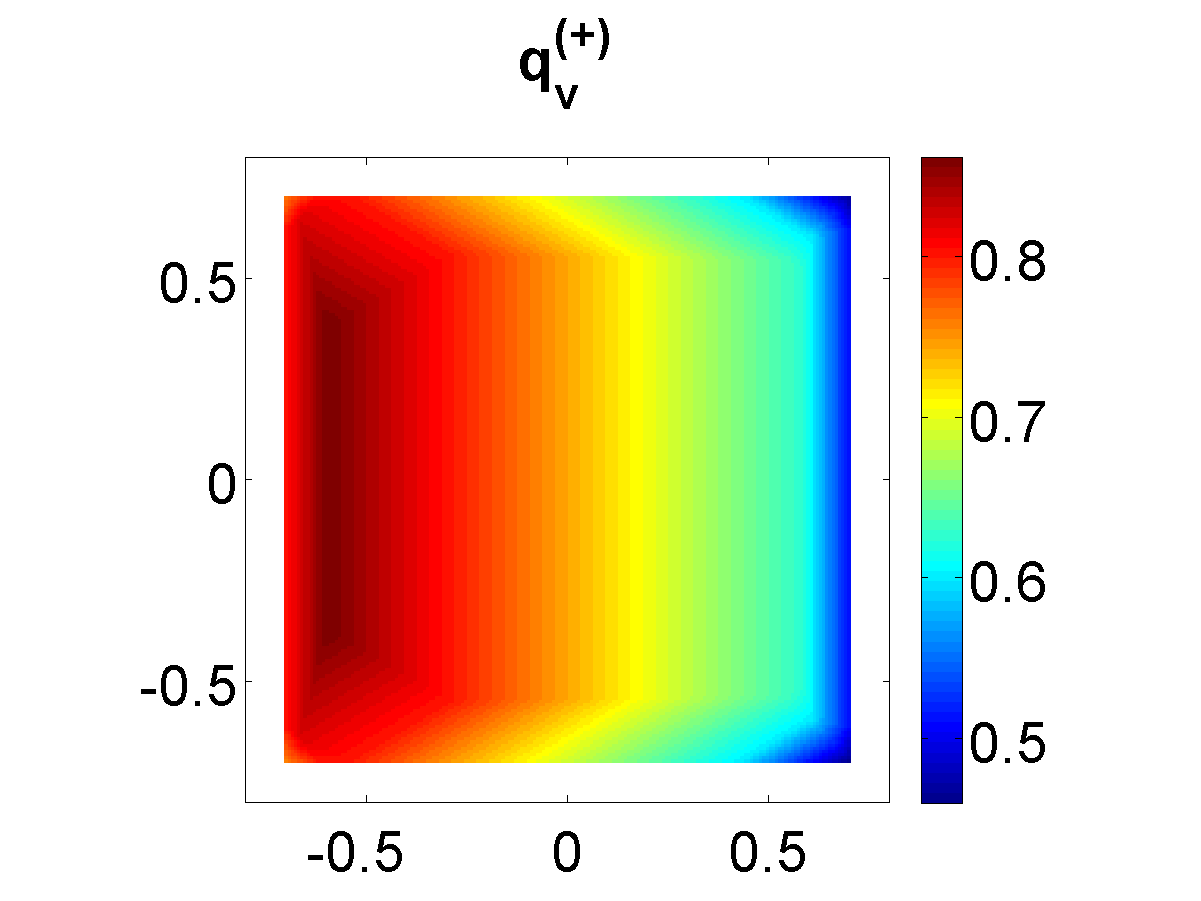}
& \includegraphics[width=0.35\linewidth]{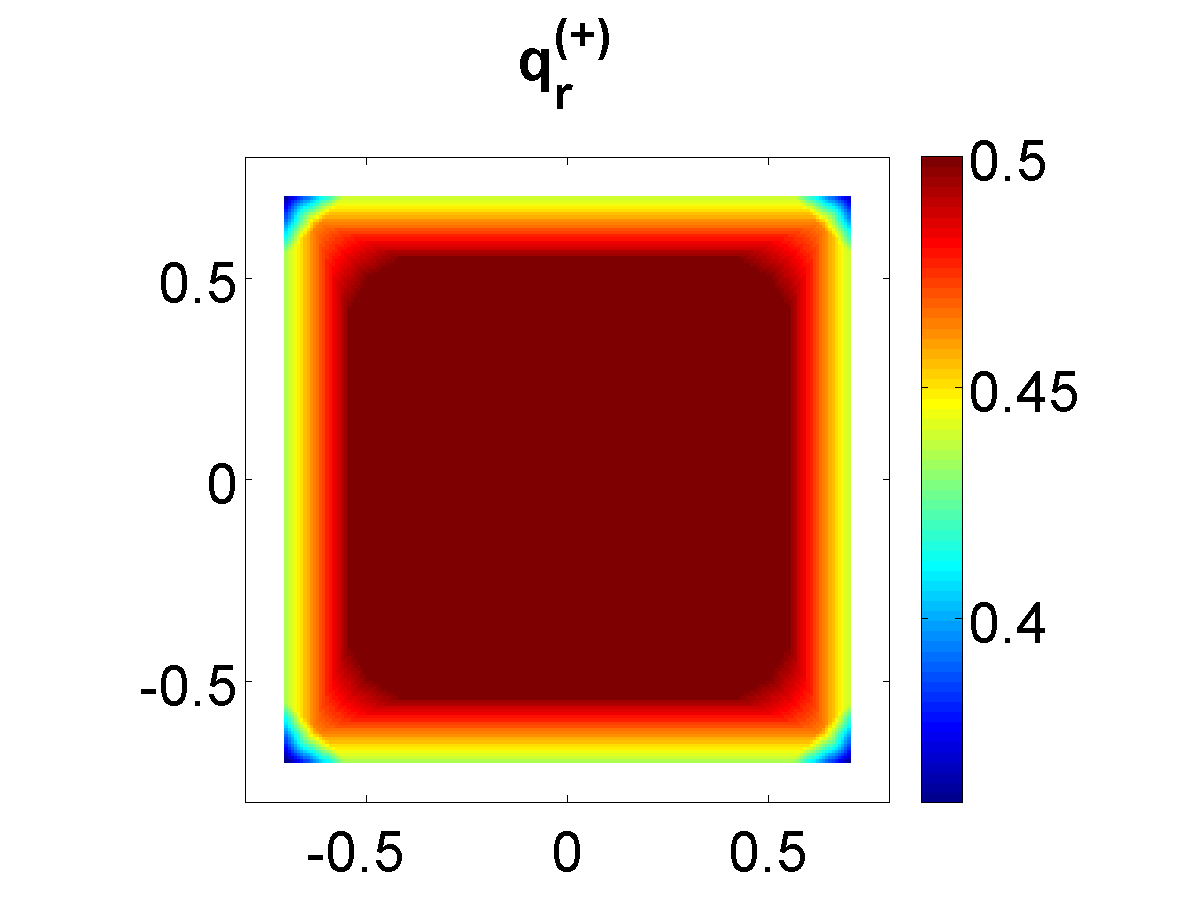}\\[-2mm]
(a) & (b) & (c)\\[0mm]
\includegraphics[width=0.35\linewidth]{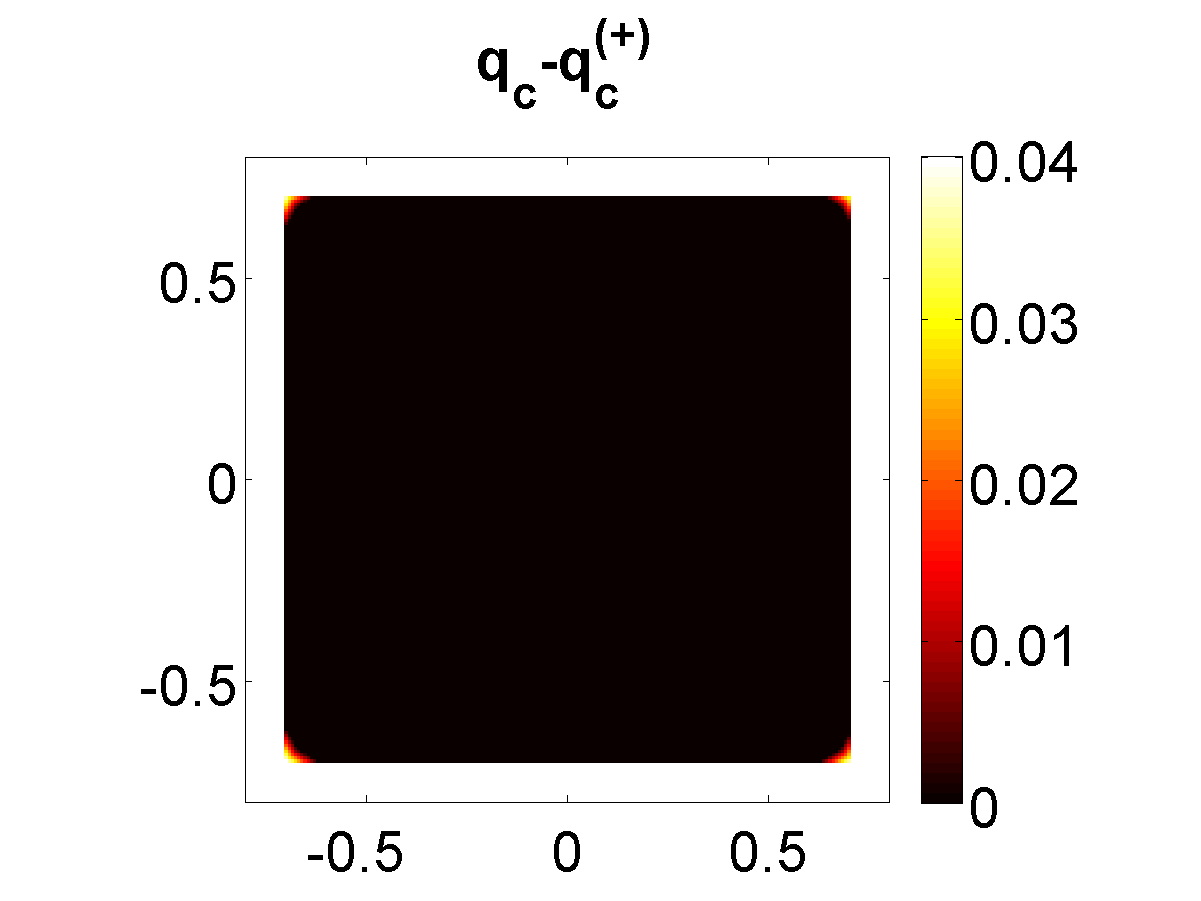}
& \includegraphics[width=0.35\linewidth]{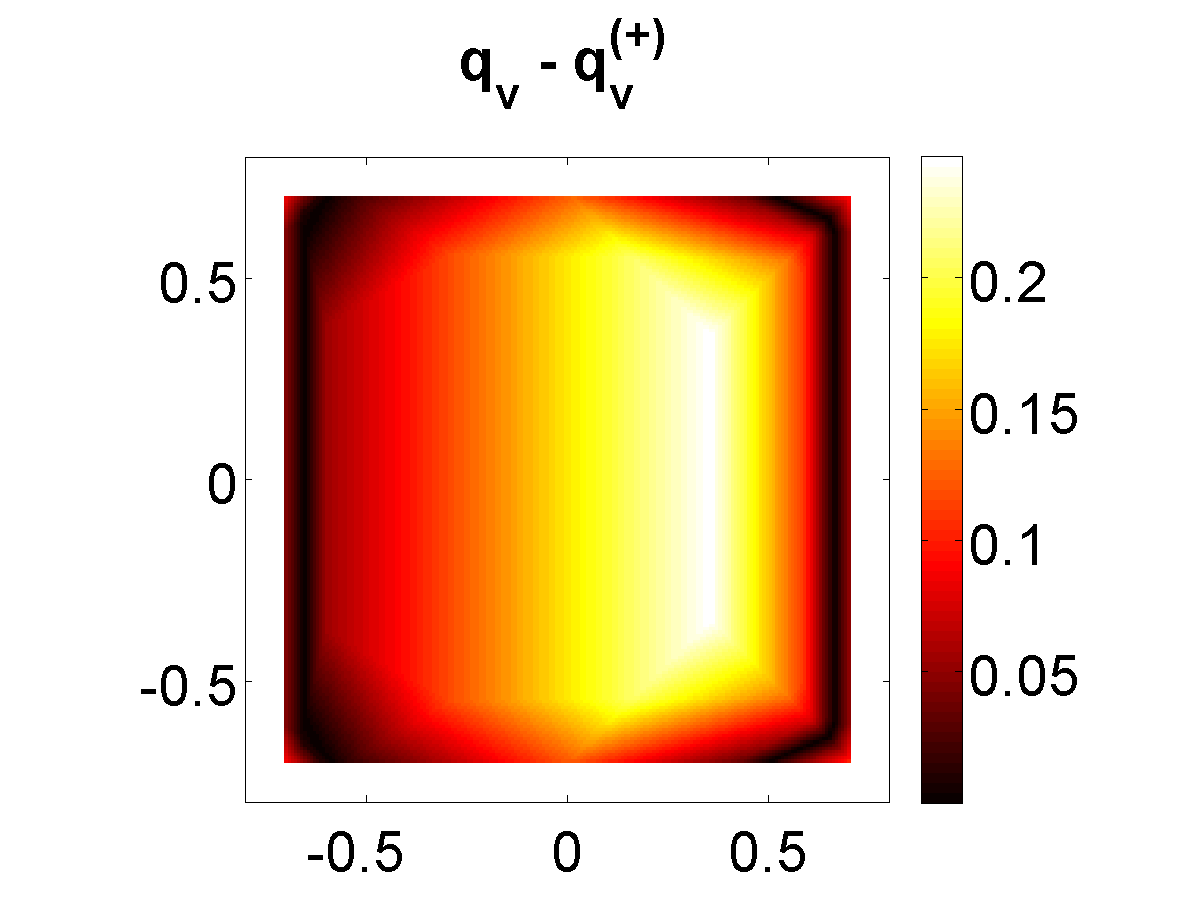}
& \includegraphics[width=0.35\linewidth]{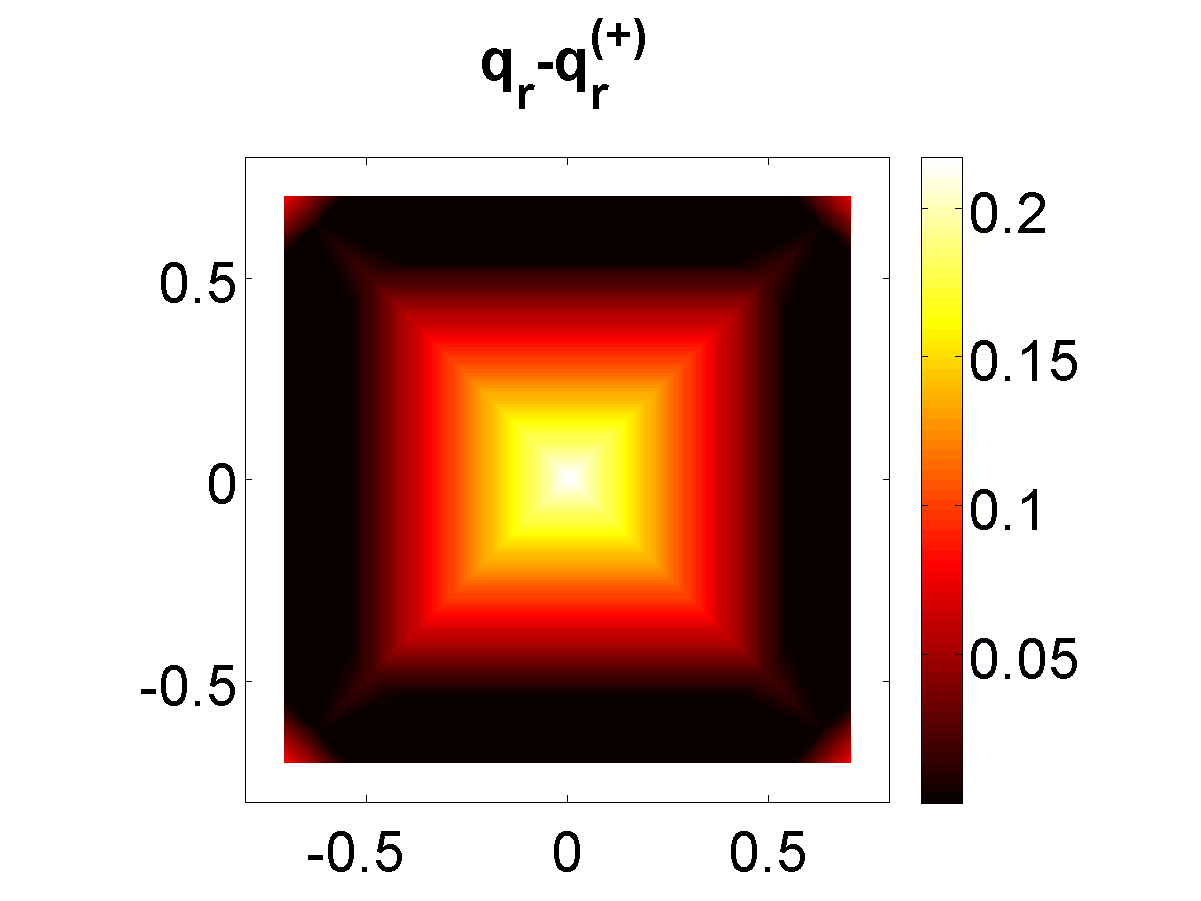}\\[-2mm]
(d) & (e) & (f)\\[0mm]
\includegraphics[width=0.35\linewidth]{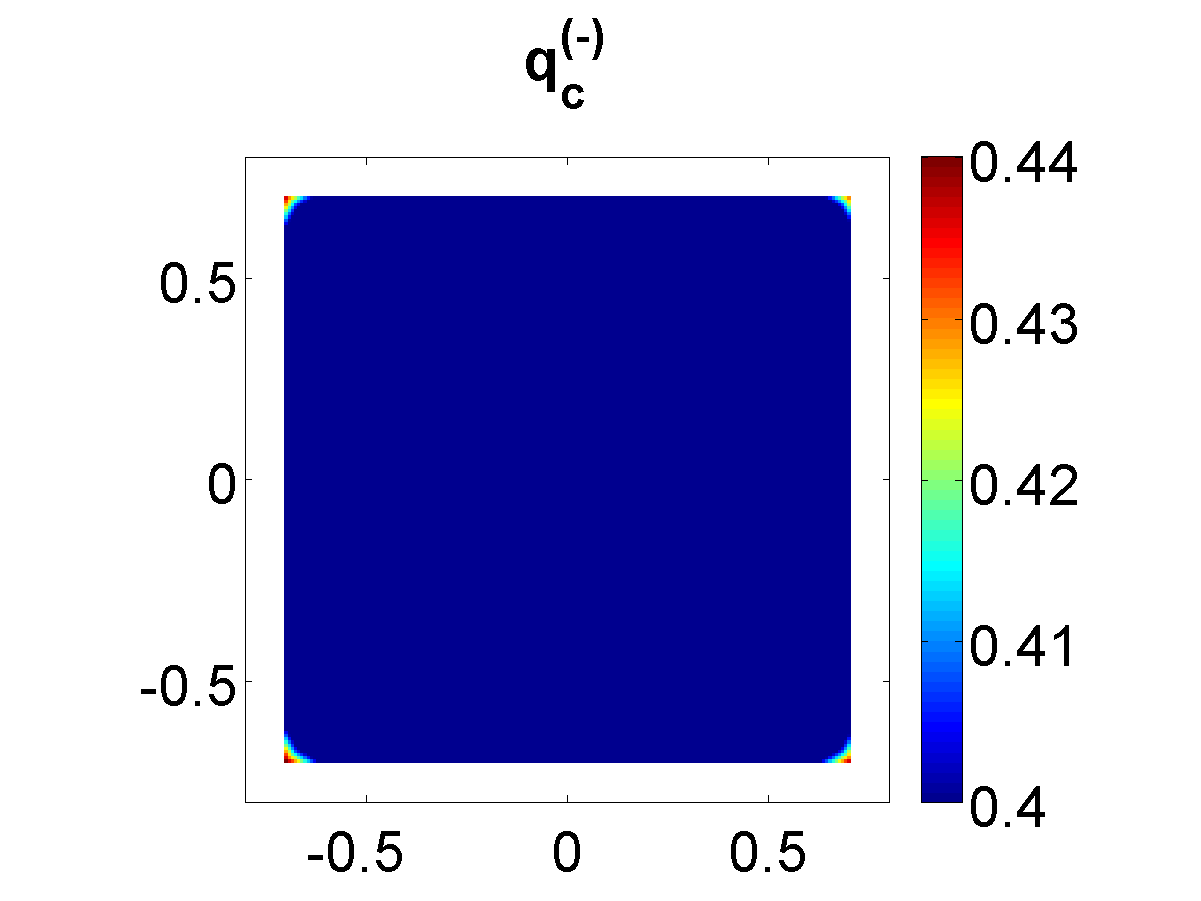}
& \includegraphics[width=0.35\linewidth]{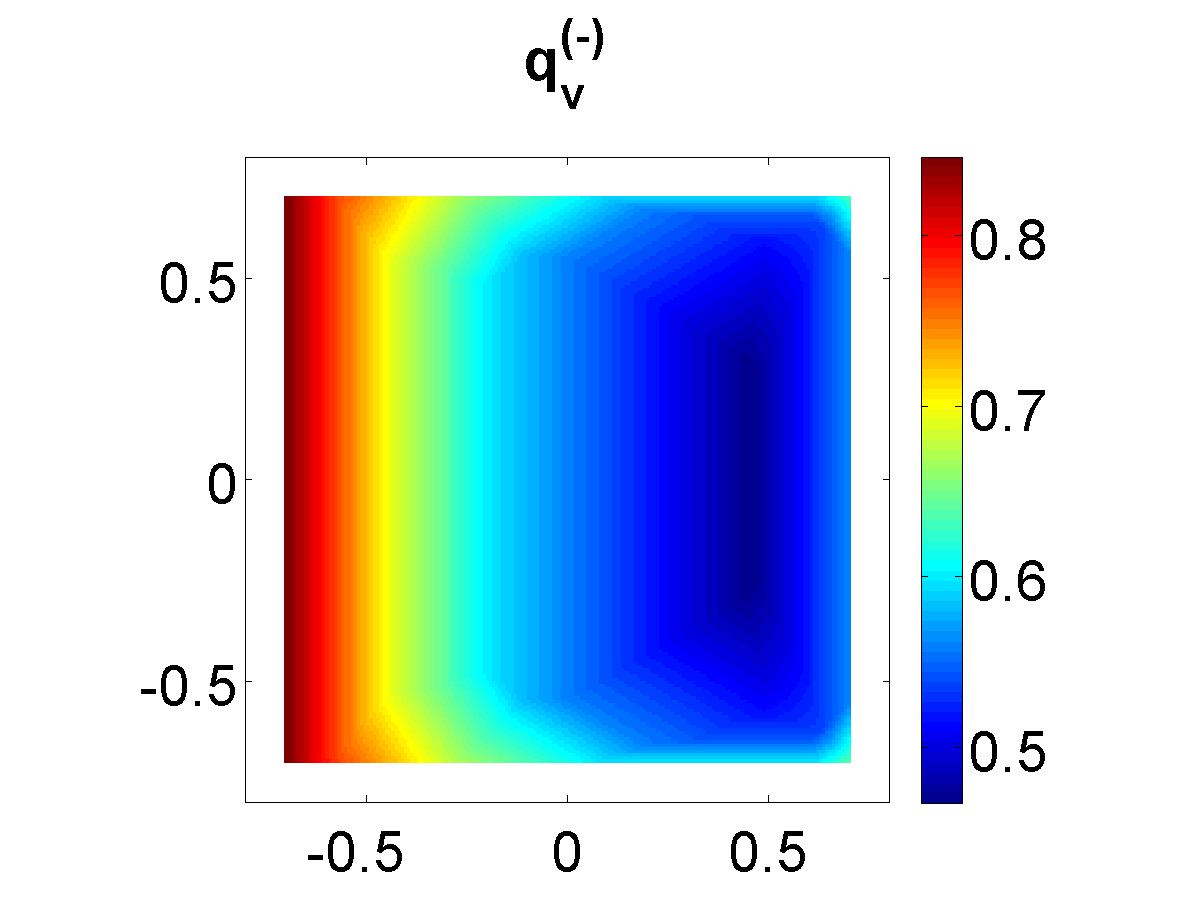}
& \includegraphics[width=0.35\linewidth]{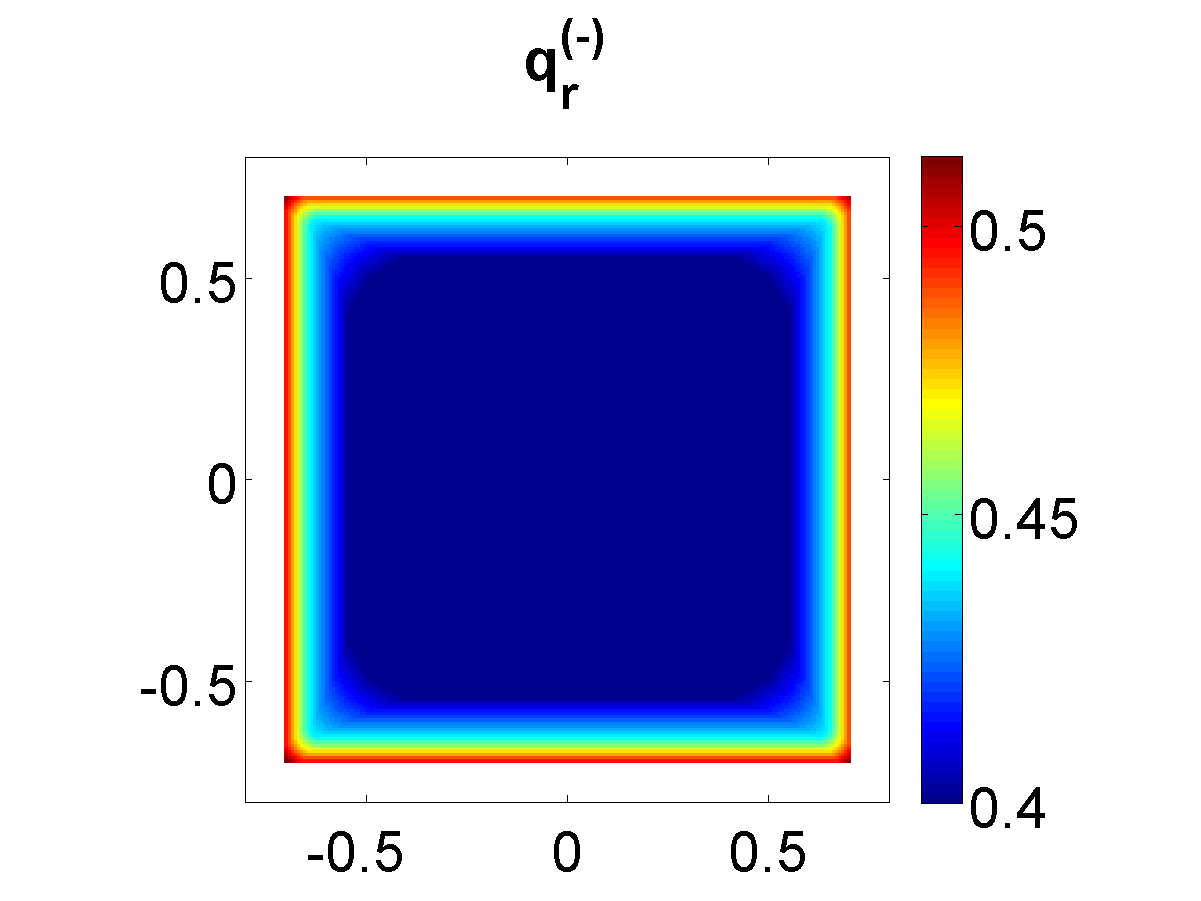}\\[-2mm]
(g) & (h) & (i)\\[0mm] 
\includegraphics[width=0.35\linewidth]{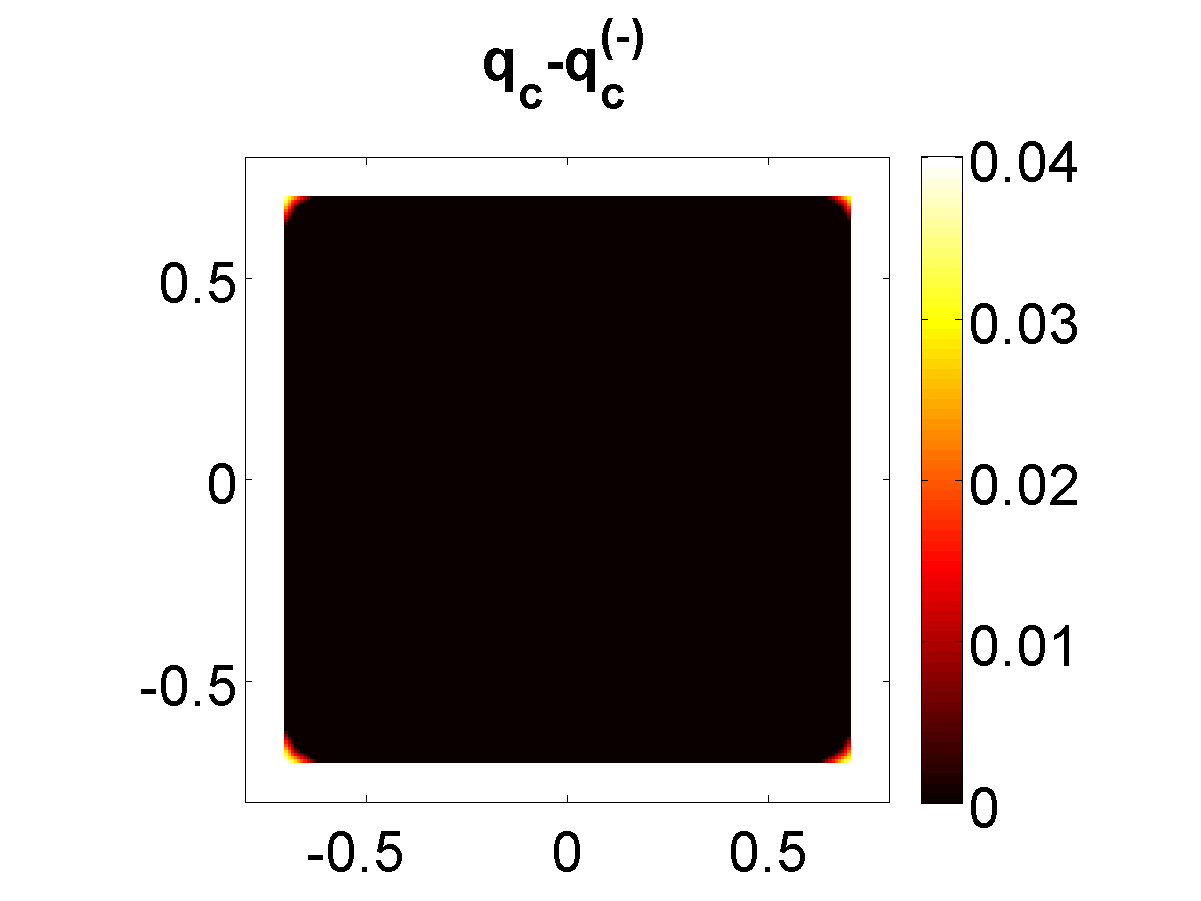}
& \includegraphics[width=0.35\linewidth]{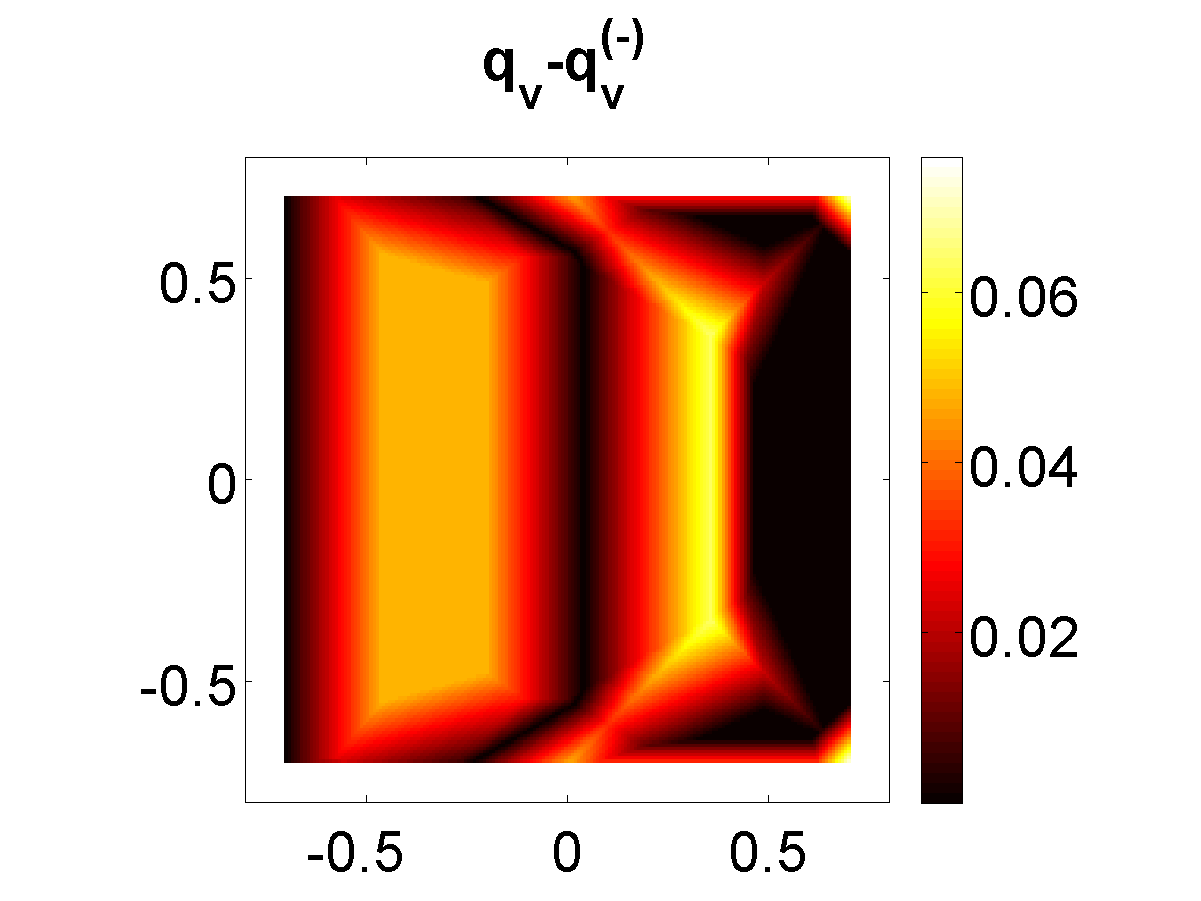}
& \includegraphics[width=0.35\linewidth]{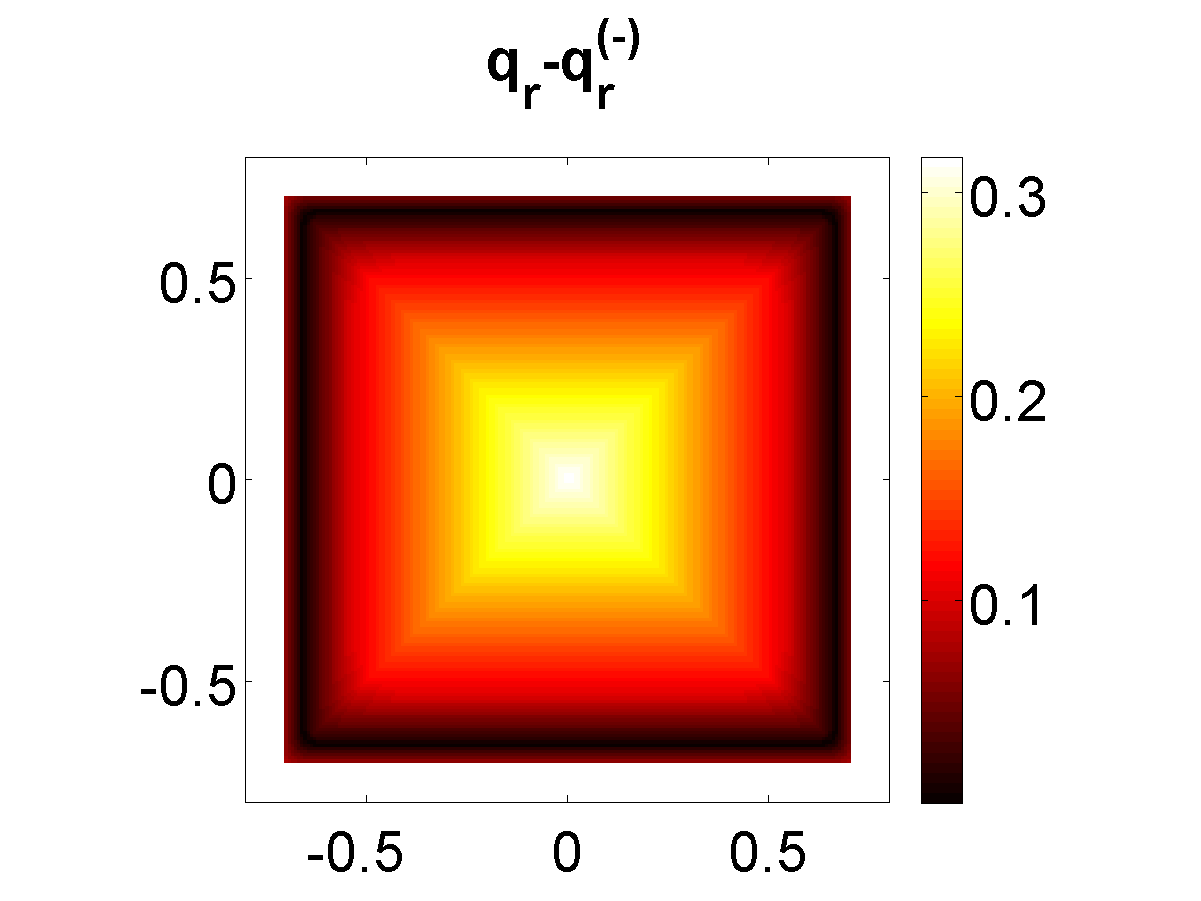}\\[-2mm]
(j) & (k) & (l)\\[-3mm]
\end{tabular}
\caption{The upper and lower bounds $q^{(\pm)}_{\mathrm{c,v,r}} \, \1_D$ computed for exact contrasts $q_{\mathrm{c,v,r}} \, \1_D$, see Figure~\ref{fig:1}, and linear comparison contrasts determined in~\eqref{eq:aux808}.
In each column, from top to bottom: $q_{\mathrm{c,v,r}}^{(+)} \, \1_D$, $(q_{\mathrm{c,v,r}}-q_{\mathrm{c,v,r}}^{(+)}) \, \1_D$, $q_{\mathrm{c,v,r}}^{(-)} \, \1_D$, and $(q_{\mathrm{c,v,r}}-q_{\mathrm{c,v,r}}^{(-)}) \, \1_D$.
First/second/third column: results for $q_{\mathrm{c}}$/$q_{\mathrm{v}}$/$q_{\mathrm{r}}$.}
\label{fig:3}
\end{figure}

Figure~\ref{fig:3} shows the resulting approximations $q_{\mathrm{c,v,r}}^{(\pm)} \, \1_D$ for the three exact contrasts $q_{\mathrm{c,v,r}}$ shown in Figure~\eqref{fig:1}.
(We initialized $q^{(\pm)}$ as $\pm 10^3 \, \1_{D}$.)
Whilst the maximal norm $\| q_{\mathrm{c}} - q_{\mathrm{c}}^{(\pm)}\|_{L^\infty(\partial D)}$ is about 0.04, $\| q_{\mathrm{r}} - q_{\mathrm{r}}^{(\pm)}\|_{L^\infty(\partial D)}$ is about 0.07; $\| q_{\mathrm{r}} - q_{\mathrm{r}}^{(+)}\|_{L^\infty(\partial D)}$ is about 0.1 and $\| q_{\mathrm{r}} - q_{\mathrm{r}}^{(-)}\|_{L^\infty(\partial D)}$ about 0.07.
This shows that the boundary values of $q_{\mathrm{c,v,r}}$ are well-approximated by their piecewise linear bounds.
The extrema of the above-mentioned differences maxima are always attained in one of the four corners, which, arguably, is natural as theory requires smooth domains.
Clearly, both bounds do not approximate the exact contrasts inside the domain $D$ unless that exact contrast is constant in $D$.
Since we deal with linear test contrasts, the upper and lower bounds $q^{(\pm)}$ are however concave and convex, respectively, as pointwise minimum and maximum over linear functions (see, e.g., Figure~\ref{fig:3}(e) and (g)).
Thus, approximating boundary values that fail to be either concave of convex certainly requires quadratic comparison functions to obtain a comparable accuracy.

%\bibliographystyle{alpha}
%\bibliography{ip-biblio2.bib} % ../../ip-biblio/ip-biblio.bib,

\providecommand{\noopsort}[1]{}

\end{document}